\newtheorem{theorem}{Theorem}[section]
\newtheorem{lemma}[theorem]{Lemma}
\newtheorem{prop}[theorem]{Proposition}
\newtheorem{cor}[theorem]{Corollary}
\newtheorem{example}[theorem]{Example}
\newtheorem{definition}[theorem]{Definition}
\theoremstyle{remark}
\newtheorem{remark}[theorem]{Remark}
\DeclareMathOperator{\id}{id}
\newcommand{\set}[1]{\left\{#1\right\}}
\newcommand{\parens}[1]{\left(#1\right)}
\renewcommand{\bar}[1]{\overline{#1}}
\newcommand{\ep}{\epsilon}
\newcommand{\la}{\lambda}
\newcommand{\N}{\mathbb{N}}
\newcommand{\Z}{\mathbb{Z}}
\newcommand{\Q}{\mathbb{Q}}
\newcommand{\C}{\mathbb{C}}
\newcommand{\cA}{\mathcal{A}}
\newcommand{\cU}{U}
\newcommand{\fsl}{\mathfrak{sl}}
\numberwithin{equation}{section}
\newcommand{\f}{\mathbf{f}}
\newcommand{\barmap}{\bar{\phantom{c}}}
\newcommand{\cL}{\mathcal{L}}
\newcommand{\bbinom}[2]{\begin{bmatrix}#1 \\ #2\end{bmatrix}}
\newcommand{\AU}{\phantom{}_{\cA}\cU}
\newcommand{\CU}{\phantom{}^{\C}U_0}
\newcommand{\QU}{U}
\newcommand{\dotAU}{\phantom{}_{\cA}\dot{U}}
\newcommand{\dotU}{\dot{\cU}}
\newcommand{\piA}{\cA^\pi}
\newcommand{\piQ}{\Q(q)^\pi}
\newcommand{\piU}{\cU^{\pi}}
\newcommand{\dotpiU}{\dot{U}^{\pi}}
\newcommand{\osp}{\mathfrak{osp}}
\newcommand{\qfact}[1]{[#1]^!}
\newcommand{\parity}[1]{{p(#1)}}
\begin{document}
\title{\textsc{Canonical basis for quantum} $\osp(1|2)$}

\author[Sean Clark and Weiqiang Wang]{Sean Clark and Weiqiang Wang}
\address{Department of Mathematics, University of Virginia, Charlottesville, VA 22904}
\email{sic5ag@virginia.edu (Clark), \quad ww9c@virginia.edu (Wang)}

\subjclass[2010]{Primary 17B37.}

\begin{abstract}
We introduce a modified quantum enveloping algebra as well as a
(modified) covering quantum algebra for the ortho-symplectic Lie
superalgebra $\osp(1|2)$. Then we formulate and compute the
corresponding canonical bases, and relate them to the counterpart
for $\mathfrak{sl}(2)$. This provides a first example of canonical
basis for quantum superalgebras.
\end{abstract}

\keywords{Quantum enveloping superalgebra, representations, canonical basis.}

\maketitle


\section{Introduction}

The canonical basis of Lusztig \cite{Lu1} and Kashiwara \cite{Ka}
has served as an important motivation of the categorification of
quantum enveloping algebras. In a recent paper \cite{HW} of David
Hill and the second author, a class of (halves of) quantum Kac-Moody
{\em super}algebras has been categorified, and in addition, it was
suggested for the first time to use a novel bar-involution to
construct canonical basis of quantum Kac-Moody superalgebras and
their integrable modules. We refer the reader to {\em loc. cit.} for
extensive references in the fast-growing area of categorification.

The aim of this paper is to formulate and compute the canonical
bases for a modified quantum enveloping superalgebra $\dotU$ as well
as for a (modified) covering quantum superalgebra $\dotU^\pi$
associated to the ortho-symplectic Lie superalgebra $\osp(1|2)$.
Since canonical basis has never been formulated before for quantum
superalgebras, we find it desirable to work out the  formulas and
constructions in detail in this rank one setting. The new features
and connections observed in this paper will be instrumental in a
forthcoming work \cite{CHW} joint with David Hill on canonical basis
for general quantum Kac-Moody superalgebras.

The algebra $\dotU$ is modified from a quantum enveloping
superalgebra $U$ for $\osp(1|2)$ by adding idempotents, following
\cite{BLM, Lu}. Our (Hopf) superalgebra $U$ is defined as a direct
sum of $\Q(q)$-superalgebras $U_0$ and $U_1$, where $U_0$ and $U_1$
differ somewhat from the quantum $\osp(1|2)$ used in the literature
(cf. \cite{KR, AB, Zou, Jeo, B2}). In contrast to those variants,
our algebras $U_0, U_1$ and $U$ are well suited for introducing a
bar-involution and an integral form as needed in the construction of
canonical basis, and the modified algebra $\dotU$ has an intrinsic
description. The bar-involution on $U$ and $\dotU$ used in this
paper has the unusual feature that it sends a quantum parameter $q$
to $-q^{-1}$ (cf. \cite{HW}).

The complexified algebras $\CU$ for $U_0$ and ${}^\C U_1$ for $U_1$
are shown to be isomorphic, and finite-dimensional simple modules of
$\CU$ were classified in \cite{Zou} in terms of highest weights
labeled by pairs $(n,\pm)$ for $n\in\N$. We show those even-weight
(i.e., odd-dimensional) simple $\CU$-modules arise from the simple
$U_0$-modules while those odd-weight (i.e., even-dimensional) simple
$\CU$-modules arise from the simple $U_1$-modules.

Following \cite{HW}, we introduce a covering quantum algebra $U^\pi$
for $\osp(1|2)$ with an additional parameter $\pi$ such that
$\pi^2=1$. The covering algebra $U^\pi$ admits a modified version
$\dotU^\pi$ too. The structure constants when multiplying the
canonical basis elements in $\dotU^\pi$ are positive integer Laurent
polynomials in $q$ and $\pi$. We expect that the algebra $\dotU^\pi$
and its canonical basis can be categorified in a generalized
framework of spin nilHecke algebras (\`a la Lauda \cite{La} for
$\dot{U}_q(\mathfrak{sl}(2))$, where $\pi$ again is categorified as
a parity shift functor as in \cite{HW}. The  algebras $U^\pi$ and
$\dotU^\pi$ specialize when $\pi=1$ to $U_q(\fsl(2))$ and its
modified version, and specialize when $\pi=-1$ to $\dotU$ and
$\dotU^\pi$. In particular, the canonical basis for $\dotU^\pi$ are
shown to specialize when $\pi=1$ and $\pi=-1$ to the canonical basis
for modified quantum $\mathfrak{sl}(2)$ \cite{Lu} and for $\dotU$,
respectively. In other words, our constructions and formulas can be
regarded as a $\pi$-enhanced version of their counterparts for
quantum $\fsl(2)$.

It is well known that Lie superalgebra $\osp(1|2)$ admits only
odd-dimensional simple modules. In contrast, the quantum $\osp(1|2)$
as defined in this paper has richer representation theory, which are
compatible with the categorification construction and also with
quantum $\fsl(2)$. All these will afford a natural generalization in
the setting of quantum Kac-Moody superalgebras.

This paper is organized as follows. In
Section~\ref{sec:algebradefs}, we define the algebras $U_0, U_1$ and
study their basic structures including the integral forms and
(anti-)automorphisms. In Section~\ref{sec:modules}, we classify the
finite-dimensional simple weight modules of $U_0$ and $U_1$. In
Section~\ref{sec:algU}, we show $U=U_0\oplus U_1$ has a natural Hopf
superalgebra structure. In Section~\ref{sec:CBRmatrix}, we find an
explicit formula for the quasi-$R$-matrix of $U$, which is then used
in defining the bar-involution for a tensor product of modules. The
canonical basis on the tensor product of two finite-dimensional
$U$-modules is computed. In Section~\ref{sec:CBdotU}, we define the
modified algebra $\dotU$, compute its canonical basis, and formulate
a bilinear form on $\dotU$. In Section~\ref{sec:covering}, we
formulate in the framework of covering algebras variants of
constructions and results in the previous sections.

\vspace{.3cm}

 {\bf Acknowledgments.}
We thank David Hill for fruitful collaboration and stimulating
discussions on closely related projects. The second author is
partially supported by an NSF grant DMS--1101268.

\section{Structures of quantum $\osp(1|2)$}
\label{sec:algebradefs}

\subsection{Algebra $U_0$}

Set
$$
\pi=-1
$$
throughout this paper except the final Section~\ref{sec:covering},
and we will use the symbol $\pi$ for the super signs in
superalgebras arising from exchanges of odd elements. This allows us
to state clean commutation formulas, and to recover many classical
formulas for quantum $\mathfrak{sl}(2)$ by simply dropping $\pi$.

\begin{definition}  \label{def:Uq}
The algebra  $U_0$ is the
$\Q(q)$-algebra generated by $E, F, K$, and $K^{-1}$, subject to the
relations:
\begin{enumerate}
 \item $KK^{-1}=1=K^{-1}K$;
 \item $KEK^{-1}=q^2E$, \quad $KFK^{-1}=q^{-2}F$;
 \item $EF-\pi FE = \frac{K-K^{-1}}{\pi q - q^{-1}}$.
\end{enumerate}
\end{definition}

\begin{remark}
There has been definitions for quantum enveloping algebra of $\osp(1|2)$, which differ
from $U_0$ by a different rescaling of the
relation~(3) above. A version of $U_q(\osp(1|2))$ appeared in
\cite{AB, Zou}, where (3) is replaced by
\begin{enumerate}
 \item[(3a)]
$EF-\pi FE = \frac{K-K^{-1}}{q^2-q^{-2}}.$
\end{enumerate}
On the other hand, the definition used in \cite{Jeo} replaces (3) by
\begin{enumerate}
 \item[(3b)]
$EF-\pi FE =\frac{K-K^{-1}}{q-q^{-1}}.$
\end{enumerate}

These variants of $U_q(\osp(1|2))$ are all isomorphic to $U_0$ as
$\Q(q)$-algebras, with isomorphisms given by fixing $F$ and $K$, and
then by rescaling $E$ by suitable scalars in $\Q(q)$. Our
Definition~\ref{def:Uq} is most suitable for introducing an integral
form $\AU$ and a bar-involution $\bar{\phantom{c}} \,: U \rightarrow
U$ below. As we shall see, (3b) is not bar-invariant under the
bar-involution \eqref{eq:bar}, while (3a) is not well suited for
constructing an integral form.
\end{remark}

\subsection{Algebra $U_1$}

We introduce a variant of quantum enveloping algebra for
$\osp(1|2)$.

\begin{definition}  \label{def:Uq2}
The  algebra  $U_1$ is the $\Q(q)$-algebra generated by $E, F, K$,
and ${K}^{-1}$, subject to the relations:
\begin{enumerate}
 \item $K{K}^{-1}=1={K}^{-1}K$;

 \item $KE{K}^{-1}=q^2E$, \quad $KF{K}^{-1}=q^{-2}F$;

 \item $EF-\pi FE = \frac{\pi K-{K}^{-1}}{\pi q - q^{-1}}$.
\end{enumerate}
\end{definition}

Note the difference between definitions of $U_0$ and $U_1$ lies in the
relation (3).

\begin{remark}\label{rem:epsrels}
As we need to mix the use of $U_0$ and $U_1$, we shall denote the
generators for $U_0$ (respectively, $U_1$) by $E_0,F_0,K_0$
(respectively, $E_1, F_1, K_1$). Then the defining relations of
$U_\epsilon$ ($\epsilon=0,1$) can be succinctly rewritten as
\begin{enumerate}
\item $K_\epsilon  {K_\epsilon}^{-1}=1={K_\epsilon}^{-1}K_\epsilon$;

\item
$K_\epsilon E_\epsilon {K_\epsilon}^{-1}=q^2E_\epsilon$, \quad
$K_\epsilon F_\epsilon{K_\epsilon}^{-1}=q^{-2}F_\epsilon$;

\item
$E_\epsilon F_\epsilon-\pi F_\epsilon E_\epsilon =
\frac{\pi^\epsilon K_\epsilon-{K_\epsilon}^{-1}}{\pi q - q^{-1}}$.
\end{enumerate}
The algebra  $U_\ep$ is naturally a superalgebra by letting $E_\ep,
F_\ep$ be odd and $K_\ep^{\pm 1}$ be even.

\end{remark}

\subsection{Complexification}

Fix a square root $\sqrt{\pi} \in \C$. For $\epsilon=0,1$, denote
$$
{}^\C U_\ep=\C(q)\otimes_{\Q(q)}U_\ep.
$$
Though $U_0$ and $U_1$ are not isomorphic as $\Q(q)$-algebras, we have
the following.

\begin{lemma} \label{lem:UU}
There is an isomorphism of $\C(q)$-algebras $\flat: {}^\C U_1
\rightarrow {}^\C U_0$ such that
\begin{align*}
\flat(F_1)=F_0, & \quad \flat(E_1)=\sqrt{\pi}E_0,\quad
\flat(K_1)=\sqrt{\pi}^{\, -1}K_0.
\end{align*}
\end{lemma}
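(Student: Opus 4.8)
The plan is to verify directly that the assignment $\flat$ respects the defining relations of ${}^\C U_1$, hence extends to an algebra homomorphism, and then to exhibit an explicit inverse. Since $\sqrt{\pi}\in\C$ is a fixed scalar, $\flat$ is clearly $\C(q)$-linear on generators; the content is relation-checking. First I would record that the map must also be specified on $K_1^{-1}$, namely $\flat(K_1^{-1})=\sqrt{\pi}\,K_0^{-1}$, so that relation (1) of $U_1$ is sent to $\sqrt{\pi}^{-1}K_0\cdot\sqrt{\pi}\,K_0^{-1}=K_0K_0^{-1}=1$ and likewise the other way; this is immediate.

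Next I would check relation (2). Applying $\flat$ to $K_1E_1K_1^{-1}=q^2E_1$ gives $(\sqrt{\pi}^{-1}K_0)(\sqrt{\pi}\,E_0)(\sqrt{\pi}\,K_0^{-1}) = \sqrt{\pi}\,K_0E_0K_0^{-1} = \sqrt{\pi}\,q^2E_0 = q^2\flat(E_1)$, as needed; the scalars $\sqrt{\pi}^{-1}$ and $\sqrt{\pi}$ cancel in the conjugation, so this works because conjugation is insensitive to rescaling $K_1$. The relation $K_1F_1K_1^{-1}=q^{-2}F_1$ is even simpler since $\flat(F_1)=F_0$ and the $\sqrt{\pi}$-factors on $K_1^{\pm1}$ cancel. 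The crux is relation (3). Applying $\flat$ to $E_1F_1-\pi F_1E_1 = \frac{\pi K_1 - K_1^{-1}}{\pi q - q^{-1}}$, the left side becomes $\sqrt{\pi}\,E_0F_0 - \pi\, F_0\sqrt{\pi}\,E_0 = \sqrt{\pi}\,(E_0F_0 - \pi F_0E_0) = \sqrt{\pi}\,\frac{K_0 - K_0^{-1}}{\pi q - q^{-1}}$, using relation (3) of $U_0$. The right side becomes $\frac{\pi\sqrt{\pi}^{-1}K_0 - \sqrt{\pi}\,K_0^{-1}}{\pi q - q^{-1}} = \sqrt{\pi}\,\frac{\pi\,\pi^{-1}K_0 - K_0^{-1}}{\pi q - q^{-1}} = \sqrt{\pi}\,\frac{K_0 - K_0^{-1}}{\pi q - q^{-1}}$, since $\pi\sqrt{\pi}^{-1}=\sqrt{\pi}^{2}\sqrt{\pi}^{-1}=\sqrt{\pi}$; the two sides agree. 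This matching is exactly why the factor $\sqrt{\pi}$ on $E_1$ and the factor $\sqrt{\pi}^{-1}$ on $K_1$ were chosen, and it is the one computation where the difference between the $\pi^\epsilon=1$ and $\pi^\epsilon=\pi$ forms of relation (3) is absorbed.

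Having established that $\flat$ is a well-defined $\C(q)$-algebra homomorphism, I would then write down the candidate inverse $\sharp:{}^\C U_0\to{}^\C U_1$ by $\sharp(F_0)=F_1$, $\sharp(E_0)=\sqrt{\pi}^{-1}E_1$, $\sharp(K_0)=\sqrt{\pi}\,K_1$ (so $\sharp(K_0^{-1})=\sqrt{\pi}^{-1}K_1^{-1}$), which is checked to be an algebra homomorphism by the same computation with $\sqrt{\pi}$ replaced by $\sqrt{\pi}^{-1}$ (note $\pi\sqrt{\pi}=\sqrt{\pi}^{-1}$, so relation (3) of $U_0$ maps correctly to relation (3) of $U_1$). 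Finally I would observe $\flat\circ\sharp$ and $\sharp\circ\flat$ fix all generators, hence are the identity, so $\flat$ is an isomorphism. I do not anticipate a genuine obstacle here: the only subtlety is bookkeeping the powers of $\sqrt{\pi}$ and using $\sqrt{\pi}^{2}=\pi$ consistently, particularly in relation (3), where the asymmetry between the numerators $\pi K_1-K_1^{-1}$ and $K_0-K_0^{-1}$ is precisely cancelled by the rescaling of $K$.
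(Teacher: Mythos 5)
Your proof is correct and takes the only natural route here: direct verification that $\flat$ preserves the defining relations, with the key scalar cancellations $\pi\sqrt{\pi}^{-1}=\sqrt{\pi}$ and $\pi\sqrt{\pi}=\sqrt{\pi}^{-1}$ (both consequences of $\sqrt{\pi}^4=\pi^2=1$) doing the work in relation (3), followed by exhibiting the inverse $\sharp$. The paper states the lemma without proof, treating it as routine, and your argument is precisely the check the authors intended.
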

We may formally regard $U_0$ and $U_1$ as two different real forms
for the same $\C(q)$-algebra. They share many of the same structural
properties, and the proofs of these properties are quite similar.
The rationale of introducing $U_1$ besides $U_0$ comes from
Sections~\ref{sec:modules} and \ref{sec:CBdotU}.

\subsection{PBW and gradings}
\label{sec:gradings}

Clearly the elements $F_\epsilon^{a} K_\epsilon^b E_\epsilon^{c} $
with $a,c\in \N$ and $b\in \Z$ span $U_\epsilon$ since any monomial
in $E_\epsilon$, $F_\epsilon$, and $K_\epsilon$ can be expressed as
a sum of such elements by using the defining relations. Proving
linear independence can be done as in \cite[1.5]{Jan}. Hence we
obtain the following.

\begin{prop} \label{prop:PBW}
The algebra $U_\epsilon$, for $\epsilon=0,1$,  has the following (PBW) bases:
$$
\set{F_\epsilon^{a} K_\epsilon^b E_\epsilon^{c} | a,c\in \N, b\in \Z}, \qquad
\set{E_\epsilon^{a} K_\epsilon^b F_\epsilon^{c} | a,c\in \N, b\in \Z}.
$$
\end{prop}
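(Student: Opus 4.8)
The plan is to establish the two PBW-type spanning sets first, then prove linear independence by producing a faithful representation of $U_\epsilon$ on a polynomial module, mimicking the classical argument for $U_q(\fsl(2))$ in \cite[1.5]{Jan}. Concretely, the spanning part is already indicated in the text: using relation~(2) one can move every $K_\epsilon^{\pm1}$ past any $E_\epsilon$ or $F_\epsilon$ at the cost of a power of $q^2$, and using relation~(3) one can rewrite $E_\epsilon F_\epsilon$ in terms of $F_\epsilon E_\epsilon$ plus lower-order terms in $(K_\epsilon - K_\epsilon^{-1})/(\pi q - q^{-1})$; an induction on the total degree of a monomial in $E_\epsilon, F_\epsilon$ then shows every monomial is a $\Q(q)$-linear combination of the ordered monomials $F_\epsilon^a K_\epsilon^b E_\epsilon^c$. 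The same argument with the roles of $E_\epsilon$ and $F_\epsilon$ swapped (equivalently, applying the anti-automorphism exchanging $E_\epsilon \leftrightarrow F_\epsilon$, if one has been introduced, or just repeating the computation) yields the second spanning set $\{E_\epsilon^a K_\epsilon^b F_\epsilon^c\}$.

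For linear independence, I would construct an explicit $U_\epsilon$-module and check that the images of the ordered monomials act by linearly independent operators. Take the vector space $M = \Q(q)[x, x^{-1}] \otimes \Q(q)[y]$ (or, following \cite{Jan} more closely, a Verma-type module with basis $m_0, m_1, m_2, \dots$ together with its twists by the grading), and define the action of $K_\epsilon$ diagonally, $E_\epsilon$ as a lowering/weight-shifting operator, and $F_\epsilon$ as multiplication by $y$ (or the analogous shift). One verifies relations (1)--(3) hold on $M$ — this is where the precise scalar $(\pi^\epsilon K_\epsilon - K_\epsilon^{-1})/(\pi q - q^{-1})$ in relation~(3) must be matched exactly, and where the sign $\pi$ in the $q$-commutator $E_\epsilon F_\epsilon - \pi F_\epsilon E_\epsilon$ enters. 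Since relation~(3) for $U_\epsilon$ differs only by the scalar factor $\pi^\epsilon$ in front of $K_\epsilon$, essentially the same module works for both $\epsilon = 0$ and $\epsilon = 1$ after a harmless rescaling; alternatively, one invokes Lemma~\ref{lem:UU} to transport the $\epsilon = 1$ case to the $\epsilon = 0$ case over $\C(q)$ and notes that linear independence over $\C(q)$ implies it over $\Q(q)$.

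Given such a module $M$, I would then show that $\{F_\epsilon^a K_\epsilon^b E_\epsilon^c\}$ act linearly independently on a suitable direct sum of copies (or grading-shifts) of $M$: the $K_\epsilon^b$ are separated by their distinct eigenvalues $q^{2b}$, the $F_\epsilon^a$ by the degree in $y$ they produce, and the $E_\epsilon^c$ by acting first on highest-weight-like vectors in different graded pieces. Tracking how a general linear combination $\sum_{a,b,c} c_{a,b,c} F_\epsilon^a K_\epsilon^b E_\epsilon^c$ acts on the vectors $x^j \otimes 1$ for varying $j$ forces all $c_{a,b,c} = 0$ by a triangularity argument. The second PBW basis follows either by the same device with $E_\epsilon$ and $F_\epsilon$ interchanged, or formally from an anti-automorphism of $U_\epsilon$ swapping $E_\epsilon$ and $F_\epsilon$ and fixing $K_\epsilon$.

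The main obstacle I anticipate is not conceptual but bookkeeping: verifying that the chosen operators on $M$ genuinely satisfy relation~(3) with the exact normalization $(\pi^\epsilon K_\epsilon - K_\epsilon^{-1})/(\pi q - q^{-1})$, and keeping the signs coming from $\pi = -1$ consistent throughout the $q$-commutator computations. Once the module is correctly normalized, the triangularity/linear-independence step is routine. A minor secondary point is being careful that the argument works uniformly over $\Q(q)$ (not merely $\C(q)$), which is why the honest polynomial-module construction is preferable to leaning on the complexification isomorphism of Lemma~\ref{lem:UU}, though the latter provides a convenient sanity check.
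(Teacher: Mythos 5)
Your approach matches the paper's, which handles spanning by reordering via the defining relations and delegates linear independence to the polynomial-module argument of \cite[1.5]{Jan}, exactly as you sketch. One small slip: there is no anti-automorphism of $U_\epsilon$ fixing $K_\epsilon$ and swapping $E_\epsilon\leftrightarrow F_\epsilon$; use instead the automorphism $\omega_\epsilon$ of Proposition~\ref{prop:autom} (which also inverts $K_\epsilon$, harmlessly reindexing $b$), or, as you yourself suggest, simply repeat the spanning and independence computations with $E_\epsilon$ and $F_\epsilon$ interchanged.
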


Let $U_\epsilon^+$ be the subalgebra of $U_\epsilon$ generated by
$E_\epsilon$, $U_\epsilon^-$ be the subalgebra generated by
$F_\epsilon$, and $U_\epsilon^0$ be the subalgebra generated by
$K_\epsilon, K_\epsilon^{-1}$.

The algebra $U_\epsilon$ has two natural gradings on it: the $\Z$-grading
arising from weight space decomposition  of $\osp(1|2)$, and a
parity $\Z_2$-grading arising from the superalgebra structure of
$\osp(1|2)$. The {\em parity $\Z_2$-grading} on the algebra $U_\epsilon$ is
defined by
\[
p(E_\epsilon)=p(F_\epsilon)=1, \qquad p(K_\epsilon)=p(K_\epsilon^{-1})=0.
\]
The {\em weight $\Z$-grading} on the algebra $U_\ep$ (which is the same
as a weight space decomposition in our rank one setting) is defined
by
\[
|E_\epsilon|=2,\qquad |F_\epsilon|=-2,\qquad |K_\epsilon|=|K^{-1}_\epsilon|=0,
\]
since the defining relations are clearly homogeneous with respect to
this definition. We have
\[
U_\epsilon=\bigoplus_{i\in 2\Z} U_\epsilon(i),
\qquad U_\epsilon(i)=\set{u\in U_\epsilon| K_\epsilon uK_\epsilon^{-1}=q^iu}.
\]

\subsection{The $\cA$-subalgebra}

Let
$$
\cA=\Z[q,q^{-1}], \qquad \N =\{0,1,2,\ldots\}.
$$
For $n\in\Z$ and $a\in\N$, we define the {\em super quantum integer}
or {\em $(q,\pi)$-integer}
\begin{equation}  \label{eq:qn}
 [n]=\frac{(\pi q)^n-q^{-n}}{\pi q - q^{-1}},
\end{equation}
and then define the corresponding factorials and binomial coefficients
\begin{equation}  \label{eq:qbinom}
\qfact{a}=\prod_{i=1}^a [i],\qquad
\bbinom{n}{a}=\frac{\prod_{i=1}^a[n+i-a]}{\qfact{a}}.
\end{equation}
We adopt the convention that $\qfact{0}=1$. Note that
$\bbinom{n}{a}=\frac{\qfact{n}}{\qfact{a}\qfact{n-a}}$, for  $n\geq
a\geq 0$. One checks that $[n]\in\cA, \bbinom{n}{a} \in\cA$. A
straightforward computation gives us
\begin{align}  \label{eq:n-n}
[-n]=-\pi^n [n],
 \qquad
\bbinom{n}{a} =(-1)^a \pi^{na + \binom{a}{2}} \bbinom{a-n-1}{a}.
\end{align}

We use these super quantum integers to define the divided powers:
\begin{equation} \label{eq:dpower}
E_\epsilon^{(a)}=\frac{E_\epsilon^a}{\qfact{a}}, \qquad
F_\epsilon^{(a)}=\frac{F_\epsilon^a}{\qfact{a}}.
\end{equation}
It is understood that $E_\epsilon^{(0)}=F_\epsilon^{(0)}=1$. For
$n\in\Z, a\in\N$, we also define the following elements in $U_\ep$
(compare \cite{Jan}):
\begin{equation} \label{eq:Kn}
[K_\epsilon;n]=\frac{(\pi q)^n \pi^\epsilon
K_\epsilon-q^{-n}K_\epsilon^{-1}}{\pi q - q^{-1}}, \qquad
\bbinom{K_\epsilon;n}{a}=\frac{\prod_{j=1}^a
[K_\epsilon;n+j-a]}{\qfact{a}}.
\end{equation}
We let $\AU_\epsilon$ be the $\cA$-subalgebra of $U_\epsilon$
generated by $E_\epsilon^{(a)}, F_\epsilon^{(a)}, K_\epsilon^{\pm
1}, \bbinom{K_\epsilon;n}{a}$, for $n\in \Z, a\in\N$. 
\subsection{Automorphisms}\label{Automorphisms}

Following a key observation in \cite{HW}, we define the
$\Q$-automorphism of $\Q(q)$, denoted by $\bar{\phantom{c}}$, such that
\begin{equation} \label{eq:bar}
\bar{q}=\pi q^{-1}.
\end{equation}
Note that the super quantum integers are bar-invariant. A map $\phi$
from a $\Q(q)$-algebra $A$ to itself is called {\em antilinear} if
$\phi(g(q)a)=\bar{g(q)}\phi(a)$, for $g(q) \in\Q(q)$. We also adopt
the convention that an {\em anti-homomorphism} $f$ on $A$ is a
$\Q(q)$-linear map satisfying $f(xy)=f(y)f(x)$, for $x,y\in A$.
Below we shall denote by $D_4$ the dihedral group of order $8$.

\begin{prop}  \label{prop:autom}
Let $\ep\in \{0,1\}.$
\begin{enumerate}
\item
There is a $\Q(q)$-antilinear involution
$\psi_\epsilon:U_\epsilon\rightarrow U_\epsilon$ such that
\[
\psi_\epsilon(E_\epsilon)=E_\epsilon, \qquad
\psi_\epsilon(F_\epsilon)=F_\epsilon, \qquad
\psi_\epsilon(K_\epsilon)=\pi^\epsilon K_\epsilon^{-1};
 \]
($\psi_\epsilon$ is referred to as the {\em bar involution} and also
denoted by $\bar{\phantom{c}} \,: U_\epsilon \rightarrow U_\epsilon$).

\item
There is a $\Q(q)$-linear automorphism
$\omega_\epsilon:U_\epsilon\rightarrow U_\epsilon$ such that
\[
\omega_\epsilon(E_\epsilon)=F_\epsilon, \qquad
\omega_\epsilon(F_\epsilon)=\pi^{1-\epsilon} E_\epsilon, \qquad
\omega_\epsilon(K_\epsilon)= K_\epsilon^{-1};\]

\item
There is a $\Q(q)$-linear anti-involution
$\tau_\epsilon:U_\epsilon\rightarrow U_\epsilon$ such that
\[
\tau_\epsilon(E_\epsilon)=\pi^{1-\epsilon} E_\epsilon, \qquad
\tau_\epsilon(F_\epsilon)=F_\epsilon, \qquad
\tau_\epsilon(K_\epsilon)= K_\epsilon^{-1};
 \]

\item
There is a $\Q(q)$-linear anti-involution
$\rho_\epsilon:\QU_\epsilon\rightarrow \QU_\epsilon$ such that
 \[
\rho_\epsilon(E_\epsilon)=qK_\epsilon F_\epsilon, \qquad
\rho_\epsilon(F_\epsilon)=qK_\epsilon^{-1}E_\epsilon, \qquad
\rho_\epsilon(K_\epsilon)=K_\epsilon.
 \]

\item
The subgroup of (anti-)automorphisms on $U_\epsilon$ generated by
$\omega_\epsilon, \tau_\epsilon, \psi_\epsilon$ is isomorphic to
$D_4\times \Z_2$ for $\ep=0$ and to $\Z_2\times\Z_2\times\Z_2$ for
$\ep=1$. More precisely,
\begin{align*}
\omega_0^4=1, \;\omega_1^2=1, \quad &
\tau_0\omega_0=\omega_0^3\tau_0, \quad
\tau_1\omega_1=\omega_1\tau_1,
 \\
\tau_\epsilon^2=\psi_\epsilon^2=1, \quad & \psi_\epsilon
\tau_\epsilon=\tau_\epsilon \psi_\epsilon, \quad
\psi_\epsilon\omega_\epsilon=\omega_\epsilon\psi_\epsilon.
\end{align*}
\end{enumerate}
\end{prop}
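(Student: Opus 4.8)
The plan is to verify each of the five parts by checking that the proposed assignments on generators preserve the defining relations of $U_\epsilon$ listed in Remark~\ref{rem:epsrels}, then extend by (anti-)linearity and multiplicativity, and finally compute the composition relations in part~(5) by evaluating both sides on the generators $E_\epsilon, F_\epsilon, K_\epsilon$. The only relation requiring any real care is relation~(3), since relations~(1)--(2) are manifestly preserved by any assignment sending $K_\epsilon$ to a power of $K_\epsilon$ and scaling $E_\epsilon, F_\epsilon$. For instance, for $\psi_\epsilon$ one applies the antilinear bar-automorphism $\bar{q}=\pi q^{-1}$ to relation~(3): the left side $E_\epsilon F_\epsilon - \pi F_\epsilon E_\epsilon$ is fixed, and the right side becomes $\frac{\pi^\epsilon \psi_\epsilon(K_\epsilon) - \psi_\epsilon(K_\epsilon)^{-1}}{\bar{\pi q - q^{-1}}}$; since $\bar{\pi q - q^{-1}} = \pi(\pi q^{-1}) - \pi q = -( \pi q - q^{-1})$ and $\psi_\epsilon(K_\epsilon)=\pi^\epsilon K_\epsilon^{-1}$, a short computation confirms this equals $\frac{\pi^\epsilon K_\epsilon - K_\epsilon^{-1}}{\pi q - q^{-1}}$ using $\pi^2=1$. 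The analogous check for $\omega_\epsilon$ and $\tau_\epsilon$ uses the fact that these are linear (anti-)homomorphisms, so one must track the reversal of order in relation~(3) for the anti-homomorphisms $\tau_\epsilon, \rho_\epsilon$; the scalars $\pi^{1-\epsilon}$ are precisely calibrated so that the relation is preserved.

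For part~(4), the anti-involution $\rho_\epsilon$, I would first note it suffices to check relation~(3) since (1)--(2) are routine; applying $\rho_\epsilon$ and reversing order, one computes $\rho_\epsilon(E_\epsilon)\rho_\epsilon(F_\epsilon) - \pi\rho_\epsilon(F_\epsilon)\rho_\epsilon(E_\epsilon) = (qK_\epsilon F_\epsilon)(qK_\epsilon^{-1}E_\epsilon) - \pi(qK_\epsilon^{-1}E_\epsilon)(qK_\epsilon F_\epsilon)$, then uses $K_\epsilon F_\epsilon = q^{-2}F_\epsilon K_\epsilon$ and $K_\epsilon E_\epsilon = q^2 E_\epsilon K_\epsilon$ to simplify to $q^{-1}F_\epsilon E_\epsilon - \pi q E_\epsilon F_\epsilon$ times a power of $K_\epsilon$, and matches against $\rho_\epsilon$ applied to the right-hand side of relation~(3). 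Each check requires showing the proposed map is an involution (parts (1),(3),(4)) or has the stated order (part (2): $\omega_0^4=1$ but $\omega_1^2=1$, which follows from $\omega_\epsilon^2(E_\epsilon) = \pi^{1-\epsilon}E_\epsilon$, so $\omega_\epsilon^2 = \mathrm{id}$ iff $\epsilon=1$ and $\omega_0^4=\mathrm{id}$).

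For part~(5), I would compute the pairwise relations among $\omega_\epsilon, \tau_\epsilon, \psi_\epsilon$ by evaluating on generators. The relation $\tau_0\omega_0 = \omega_0^3\tau_0$ versus $\tau_1\omega_1 = \omega_1\tau_1$ is the substantive point: on $E_\epsilon$ we get $\tau_\epsilon\omega_\epsilon(E_\epsilon) = \tau_\epsilon(F_\epsilon) = F_\epsilon$, while $\omega_\epsilon^k\tau_\epsilon(E_\epsilon) = \omega_\epsilon^k(\pi^{1-\epsilon}E_\epsilon)$, and one tracks which power $k$ makes these agree on all three generators, finding $k=3$ forced when $\epsilon=0$ (because $\omega_0$ has order $4$) and $k=1$ when $\epsilon=1$. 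The commutativity relations $\psi_\epsilon\tau_\epsilon = \tau_\epsilon\psi_\epsilon$ and $\psi_\epsilon\omega_\epsilon = \omega_\epsilon\psi_\epsilon$ follow from direct evaluation, noting that $\psi_\epsilon$ is antilinear but the scalars $\pi^{1-\epsilon}, \pi^\epsilon$ are bar-invariant. Having established all these relations, the identification of the group generated by $\omega_\epsilon, \tau_\epsilon, \psi_\epsilon$ follows by a finite group-presentation argument: for $\epsilon=0$ the subgroup $\langle \omega_0, \tau_0\rangle$ with $\omega_0^4 = \tau_0^2 = 1$ and $\tau_0\omega_0\tau_0^{-1} = \omega_0^{-1}$ is the dihedral group $D_4$ of order $8$, and $\psi_0$ is a central involution not in this subgroup (one checks $\psi_0 \neq \omega_0^i\tau_0^j$ by, e.g., antilinearity versus linearity), giving $D_4\times\Z_2$; for $\epsilon=1$ all three generators are commuting involutions that are pairwise independent, giving $\Z_2\times\Z_2\times\Z_2$.

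The main obstacle I anticipate is not conceptual but bookkeeping: correctly tracking the powers of $\pi$ and the sign changes in $\pi q - q^{-1}$ under the antilinear map and under order-reversal, so that each relation~(3) check closes exactly. The scalars $\pi^{1-\epsilon}$ and $\pi^\epsilon$ appearing in the definitions are rigidly determined by these checks, and the distinction between $\epsilon=0$ and $\epsilon=1$ (ultimately coming from the $\pi^\epsilon$ in relation~(3) of $U_\epsilon$) is exactly what produces the difference between $D_4\times\Z_2$ and $\Z_2^3$ in part~(5). Since $\pi^2=1$ throughout, all the verifications are finite and elementary once the conventions are pinned down.
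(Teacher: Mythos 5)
Your proposal follows the paper's own route exactly: verify that each proposed assignment preserves the defining relations (with relation~(3) of Remark~\ref{rem:epsrels} the only nontrivial check, requiring careful tracking of signs under the antilinear bar and under order reversal), and then establish part~(5) by evaluating compositions on generators and identifying the resulting presentations. One small inaccuracy: in your sketch for $\rho_\epsilon$, the reduction using $K_\epsilon F_\epsilon K_\epsilon^{-1}=q^{-2}F_\epsilon$ and $K_\epsilon^{-1}E_\epsilon K_\epsilon=q^{-2}E_\epsilon$ gives $E_\epsilon F_\epsilon-\pi F_\epsilon E_\epsilon$ cleanly (the $q^{\pm1}$ factors and the power of $K_\epsilon$ cancel entirely, rather than leaving a residual $q^{\pm1}F_\epsilon E_\epsilon$ term), but this bookkeeping slip does not affect the conclusion.
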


\begin{proof}
This is proved by a direct computation, and let us suppress the
subscript $\ep$.  To illustrate, let us verify that the (most
involved) commutation relation (3)  in Remark~\ref{rem:epsrels}
between $E$ and $F$ is preserved under these maps. Since $\psi$
fixes $E$, $F$, and $\pi^\epsilon K - K^{-1}$, it preserves the
relation between $E$ and $F$, whence (1).

To verify for (2), we compute
\begin{align*}
\omega(EF-\pi FE)&=\pi^{1-\epsilon}FE-\pi^\epsilon EF =
-\pi^{\epsilon}(EF-\pi FE),
 \\
\omega\parens{\frac{\pi^\epsilon K-K^{-1}}{\pi q - q^{-1}}}&=
-\pi^{\epsilon}\parens{\frac{\pi^\epsilon K-K^{-1}}{\pi q -
q^{-1}}}.
\end{align*}

For (4), we further compute
\begin{align*}
\rho(EF-\pi FE)&=q^2K^{-1}EKF-\pi q^2KFK^{-1}E = EF-\pi FE,
 \\
\rho\parens{\frac{\pi^\epsilon K-K^{-1}}{\pi q - q^{-1}}} &=
\frac{\pi^\epsilon K-K^{-1}}{\pi q - q^{-1}}.
\end{align*}
The calculation for $\tau$ in (3) is exactly the same as for
$\omega$. Finally (5) may be quickly verified by checking on the
generators.
\end{proof}

Let $\ep\in \{0,1\}$. By Proposition~\ref{prop:autom}, we have the
following identities in $U_\ep$: for $n\in \Z, a\in\N$,
\begin{align} \label{eq:omega}
\begin{split}
\omega_\epsilon(E_\ep^{(r)})=F_\ep^{(r)}, &\qquad
 \omega_\epsilon(F_\ep^{(r)})=\pi^{r(1-\ep)} E_\ep^{(r)},
 \\
\omega_\epsilon([K_\epsilon;n])&=-\pi^{\ep+n} [K_\epsilon;-n],
 \\
\omega_\epsilon\parens{\bbinom{K_\epsilon;n}{a}} &=(-1)^a\pi^{\ep
a+na-\binom{a}{2}} \bbinom{K_\epsilon;a-n-1}{a}.
\end{split}
\end{align}
It is straightforward to check the following identities in $\AU_\ep$:
for $a,b, c, s \in \Z$,
\begin{align}
[b+c][K_\epsilon;a] &=[b][K_\epsilon;a+c]+\pi^b [c][K_\epsilon;a-b],
  \notag   \\
E_\epsilon[K_\epsilon;s] &= [K_\epsilon;s-2]E_\epsilon,
  \label{eq:abc} \\
F_\epsilon[K_\epsilon;s] &=[K_\epsilon;s+2]F_\epsilon. \notag
\end{align}

\subsection{Commutation relations}

\begin{lemma}\label{divpowcom}
Let $\ep\in \{0,1\}$.
The following identities hold in $\AU_\epsilon$: for $r, s \geq 1$,
\begin{enumerate}
\item
$\pi^s E_\epsilon F_\epsilon^{(s)}=F_\epsilon^{(s)}E_\epsilon+\pi
F_\epsilon^{(s-1)}[K_\epsilon;1-s]$;

\item
$\pi^{rs} E_\epsilon^{(r)}F_\epsilon^{(s)}=\sum_{i=0}^{\min(r,s)}
\pi^{\binom{i+1}{2}}
F_\epsilon^{(s-i)}\bbinom{K_\epsilon;2i-(r+s)}{i}E_\epsilon^{(r-i)}$;

\item
$\pi^s F_\epsilon E_\epsilon^{(s)}= E_\epsilon^{(s)} F_\epsilon-
\pi^{1-s} E_\epsilon^{(s-1)}[K_\epsilon;s-1]$;

\item
$\pi^{rs} F_\epsilon^{(s)}E_\epsilon^{(r)}=\sum_{i=0}^{\min(r,s)}
(-1)^i\pi^{i(r+s)}
E_\epsilon^{(r-i)}\bbinom{K_\epsilon;r+s-(i+1)}{i}F_\epsilon^{(s-i)}$.
\end{enumerate}
\end{lemma}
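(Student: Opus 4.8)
\medskip
\noindent\emph{Proof proposal.}
The plan is to deduce all four identities from the single defining relation (3) of Remark~\ref{rem:epsrels}, which in the notation of \eqref{eq:Kn} reads $E_\ep F_\ep-\pi F_\ep E_\ep=[K_\ep;0]$. I would prove (1) by induction on $s$, then (2) by induction on $r$ taking (1) as the base case $r=1$, and finally obtain (3) from (1) and (4) from (2) by applying the algebra automorphism $\omega_\ep$ of Proposition~\ref{prop:autom}(2) together with \eqref{eq:omega}. Since $\pi^2=1$ one is free to simplify the $\pi$-exponents produced by $\omega_\ep$, and one checks that they collapse precisely to the exponents displayed in (3) and (4); for instance $\omega_\ep$ transports (1) to $\pi^s F_\ep E_\ep^{(s)}=E_\ep^{(s)}F_\ep-\pi^{s+1}E_\ep^{(s-1)}[K_\ep;s-1]$, and $\pi^{s+1}=\pi^{1-s}$. (Alternatively, (3) and (4) admit direct inductive proofs mirroring those of (1) and (2) with the roles of $E_\ep$ and $F_\ep$ interchanged.) All terms appearing are manifestly in $\AU_\ep$, so it suffices to verify the identities in $U_\ep$.

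For (1): the base case $s=1$ is the defining relation multiplied by $\pi$. For $s\to s+1$, write $F_\ep^{(s+1)}=\frac{1}{[s+1]}F_\ep F_\ep^{(s)}$, move one $E_\ep$ past the leftmost $F_\ep$ using the case $s=1$, apply the inductive hypothesis to $E_\ep F_\ep^{(s)}$, use $F_\ep F_\ep^{(s-1)}=[s]F_\ep^{(s)}$ and $F_\ep F_\ep^{(s)}=[s+1]F_\ep^{(s+1)}$, and move the leftover $[K_\ep;0]$ across $F_\ep^{(s)}$ via $[K_\ep;0]F_\ep^{(s)}=F_\ep^{(s)}[K_\ep;-2s]$, a consequence of iterating the relation $F_\ep[K_\ep;t]=[K_\ep;t+2]F_\ep$ from \eqref{eq:abc}. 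Collecting the two resulting $F_\ep^{(s)}$-terms and clearing $[s+1]$ reduces the step to the identity $[s][K_\ep;1-s]+\pi^s[K_\ep;-2s]=[s+1][K_\ep;-s]$, which is exactly the first relation in \eqref{eq:abc} with $(b,c,a)=(s,1,-s)$.

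For (2): induct on $r$; the case $r=1$ is (1), since $\bbinom{K_\ep;n}{0}=1$ and $\bbinom{K_\ep;n}{1}=[K_\ep;n]$. For $r\to r+1$, write $E_\ep^{(r+1)}=\frac{1}{[r+1]}E_\ep E_\ep^{(r)}$, apply the inductive hypothesis, and push the extra $E_\ep$ rightward through each summand $F_\ep^{(s-i)}\bbinom{K_\ep;2i-(r+s)}{i}E_\ep^{(r-i)}$ using (1) and the relation $E_\ep[K_\ep;t]=[K_\ep;t-2]E_\ep$ from \eqref{eq:abc}; then re-index the sum and gather coefficients. This gathering is governed by a Pascal-type recursion for the $\bbinom{K_\ep;n}{a}$, derived from \eqref{eq:abc} in the standard way, and it is here that the super signs generate the factor $\pi^{\binom{i+1}{2}}$ in (2). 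Passing this through $\omega_\ep$, with the binomial transformation $\bbinom{K_\ep;2i-(r+s)}{i}\mapsto \pm\bbinom{K_\ep;(r+s)-i-1}{i}$ of \eqref{eq:omega}, produces the factor $(-1)^i\pi^{i(r+s)}$ of (4), the indices matching since $(r+s)-i-1=r+s-(i+1)$.

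The bulk of this is a direct computation of exactly the kind carried out in \cite{Jan}; the one genuinely delicate point is keeping track of the powers of $\pi$ in the Pascal-type recursion for $\bbinom{K_\ep;n}{a}$ used in the inductive step of (2) and in the $\omega_\ep$-transport of the exponents, all of which are dictated by \eqref{eq:n-n} and \eqref{eq:abc} and verified using $\pi^2=1$.
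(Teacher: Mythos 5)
Your proposal is correct and follows essentially the same strategy as the paper: induction on $s$ for (1), induction on $r$ for (2) with (1) as the base case, and transport through $\omega_\ep$ using \eqref{eq:omega} to obtain (3) and (4). The only cosmetic difference is in the inductive step of (1), where you peel off an $F_\ep$ on the left (requiring \eqref{eq:abc} with $(a,b,c)=(-s,s,1)$) whereas the paper peels one off on the right (using $(a,b,c)=(-s,1,s)$); both reduce to the same three-term identity and the computations are interchangeable.
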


\begin{proof}
The first two identities (1) and (2) can be proven using induction.
Fix $\epsilon\in\set{0,1}$. Again, we suppress the subscripts
throughout the proof.

(1). The base case $s=1$ is a defining relation for $U_\ep$. Now
suppose that the identity (1) holds for some $s$. Then
\begin{align*}
\pi^{s+1} E F^{(s)}F&=\pi F^{(s)}E F+\pi^2 F^{(s-1)}[K;1-s]F
 \\
&=F^{(s)}FE+\pi F^{(s)}[K;0]+\pi^2[s] F^{(s)}[K;-1-s]
 \\
&=F^{(s)}FE+\pi F^{(s)} ([K;0]+\pi [s] [K;-1-s])
 \\
&=F^{(s)}FE+\pi F^{(s)}[s+1][K;-s]
\end{align*}
The last equality follows from \eqref{eq:abc} with $a=-s$, $b=1$,
and $c=s$. Dividing both sides by $[s+1]$ finishes the induction
step.

(2). We proceed by induction on $r$, with the case case for $r=1$
being (1). Suppose now that the identity (2) holds for some $r$.
Then
\begin{align}
\pi^{rs+s} EE^{(r)}F^{(s)}&=\sum_{i=0}^{\min(r,s)} \pi^{\binom{i+1}{2}}
\pi^s E F^{(s-i)}\bbinom{K;2i-(r+s)}{i}E^{(r-i)}
\notag \\
&=\sum_{i=0}^{\min(r,s)} \pi^{\binom{i+1}{2}}
\pi^i F^{(s-i)}E \bbinom{K;2i-(r+s)}{i}E^{(r-i)}
\notag \\
&\ \ +\sum_{i=0}^{\min(r,s)}\pi^{\binom{i+1}{2}}\pi^{i+1}
F^{(s-i-1)}[K;1+i-s] \bbinom{K;2i-(r+s)}{i}E^{(r-i)}
\notag \\
&=\sum_{i=0}^{\min(r,s)} \pi^{\binom{i+1}{2}}
\pi^i F^{(s-i)} \bbinom{K;2i-(r+s+2)}{i}EE^{(r-i)}
\notag \\
&\ \ +\sum_{i=1}^{\min(r,s)+1}\pi^{\binom{i}{2}}\pi^{i}
F^{(s-i)}[K;i-s] \bbinom{K;2i-(r+s+2)}{i-1}E^{(r-i+1)}
\notag \\
&=\sum_{i=0}^{\min(r+1,s)}
\pi^{\binom{i+1}{2}}F^{(s-i)}X_{i}E^{(r+1-i)}. \label{eq:ErFs}
\end{align}
Here $X_0=[r+1]$, $X_{r+1}=[r+1]\bbinom{K;r+1-s}{r+1}$ if $r< s$,
and for $1\leq i \leq \min(r,s)$,
\begin{align*}
X_i&=  \pi^{i}[r+1-i]\bbinom{K;2i-(r+s+2)}{i}+[K;i-s] \bbinom{K;2i-(r+s+2)}{i-1}\\
&=[i]^{-1}\bbinom{K;2i-(r+s+2)}{i-1} \big(\pi^i [r+1-i]
[K;i-(r+s+1)]+[i][K;i-s]\big)
 \\
&\stackrel{(*)}{=}[i]^{-1}\bbinom{K;2i-(r+s+2)}{i-1} [r+1] [K;2i-(r+s+1)]\\
&=[r+1]\bbinom{K;2i-(r+s+1)}{i}.
\end{align*}
The equality ($*$) above follows from \eqref{eq:abc} with
$a=2i-(r+s+1)$, $b=i$, and $c=r+1-i$. Dividing both sides of
\eqref{eq:ErFs} by $[r+1]$ we obtain (2).

The identities (3) and (4) follow by applying the automorphism
$\omega_\epsilon$ to (1) and (2) and using \eqref{eq:omega}.
\end{proof}

\section{Finite-dimensional representations} 
\label{sec:modules}


\subsection{Weight $U_\epsilon$-modules}\label{subsec:Uepmodules}
Let us now turn to $U_\ep$-modules, for $\ep=1,2$. We will call a
$U_\epsilon$-module $M$ a \textit{weight module} if the action of
$K$ on $M$ is semisimple with finite-dimensional eigenspaces (i.e.,
weight spaces). The {\em Verma module} of $U_\ep$ of highest weight
$\la\in \Q(q)$ is defined to be
$$
M_\ep^\la =U_\ep/ (U_\ep E_\ep +U_\ep (K_\ep -\la)),
$$
with an even highest weight vector denoted by $\nu$. Then by
Proposition~\ref{prop:PBW} $M_\ep^\la$ has a basis given by
$F_\ep^{(k)} \nu$, for $k\ge 0$. Denote by $L_\ep^\la$ for now the
unique irreducible quotient module of $M_\ep^\la$. We observe the
following three statements are equivalent: (i) The $U_\ep$-module
$M_\ep^\la$ is reducible; (2) $M_\ep^\la$ admits a (singular) vector
$F^{(t)} \nu$ for some $t>0$ annihilate by $E_\ep$; (3) $L_\ep^\la$
is finite dimensional. By Lemma~\ref{divpowcom},  we have
\[
E_\epsilon F_\epsilon^{(t)} \nu =\pi F_\epsilon^{(t-1)}[K_\epsilon;1-t]\nu.
\]
A quick calculation using this equation to locate a possible
singular vector in $M_\ep^\la$ leads to the following.

\begin{prop}  \label{prop:Umod}
Let  $\ep\in \{0,1\}$.
\begin{enumerate}
\item
$M_\ep^\la$ is an irreducible $U_\ep$-module, unless $\la =\pm q^n$
for $n\in \ep+2\N$.

\item
For each $n\in \ep+2\N$, there is a unique pair of
$(n+1)$-dimensional simple $U_\epsilon$-modules
$L(n,\pm):=L_\ep^{\pm q^n}$ of highest weight $\pm q^{n}$. Moreover,
any finite-dimensional simple weight $U_\epsilon$-module is
isomorphic to one such module.
\end{enumerate}
\end{prop}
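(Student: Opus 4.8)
The plan is to follow the classical Verma module strategy for $U_q(\mathfrak{sl}(2))$, adapted to the super setting via the $(q,\pi)$-integers. First I would make precise the three equivalent conditions stated before the proposition: that $M_\ep^\la$ is reducible, that it contains a singular vector $F_\ep^{(t)}\nu$ with $t>0$ killed by $E_\ep$, and that $L_\ep^\la$ is finite-dimensional. The nontrivial implications are (i)$\Rightarrow$(ii) and (ii)$\Rightarrow$(iii): the first because any proper submodule, being a weight submodule not containing $\nu$, must be spanned by some of the $F_\ep^{(k)}\nu$ and hence has a lowest such $k=t$, which is then necessarily a highest-weight vector (killed by $E_\ep$ since $E_\ep F_\ep^{(t)}\nu$ lies in the span of $F_\ep^{(t-1)}\nu$ and must lie in the submodule, forcing it to be zero by minimality of $t$); the second because once a singular vector $F_\ep^{(t)}\nu$ exists, the submodule it generates is $\mathrm{span}\{F_\ep^{(k)}\nu : k\ge t\}$, and the quotient is $(t)$-dimensional, irreducible, and equals $L_\ep^\la$.

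The computational heart is to determine exactly when such a $t$ exists. Using the displayed identity $E_\ep F_\ep^{(t)}\nu = \pi F_\ep^{(t-1)}[K_\ep;1-t]\nu$ from Lemma~\ref{divpowcom}(1), and $K_\ep\nu = \la\nu$, one computes
\[
[K_\ep;1-t]\nu = \frac{(\pi q)^{1-t}\pi^\ep \la - q^{t-1}\la^{-1}}{\pi q - q^{-1}}\,\nu .
\]
So $F_\ep^{(t)}\nu$ is singular precisely when $(\pi q)^{1-t}\pi^\ep\la = q^{t-1}\la^{-1}$, i.e. $\la^2 = \pi^{1-t-\ep}\,q^{2(t-1)}$. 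Writing $n = t-1\ge 0$, this is $\la^2 = \pi^{-n-\ep}q^{2n} = \pi^{n+\ep}q^{2n}$ (since $\pi^2=1$). I would then observe that this has a solution $\la\in\Q(q)$ if and only if $n+\ep$ is even (so the $\pi$-power disappears), i.e. $n\in\ep+2\N$, in which case $\la = \pm q^n$; and conversely, for $\la=\pm q^n$ with $n\in\ep+2\N$ the singular vector is exactly $F_\ep^{(n+1)}\nu$, giving the $(n+1)$-dimensional quotient $L(n,\pm)$. This establishes (1) and the existence half of (2). I should also check that distinct pairs $(n,\pm)$ give non-isomorphic modules, which is immediate from dimension and from the highest weight.

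For the final claim — that every finite-dimensional simple weight module is one of the $L(n,\pm)$ — I would argue that such a module $M$ has a highest weight vector: since $K_\ep$ acts semisimply and $|E_\ep|=2$ shifts weights up, a weight vector of maximal weight is killed by $E_\ep$; normalizing, $K_\ep$ acts on it by some scalar $\mu\in\Q(q)^\times$. One checks the highest weight vector may be taken even (the parity grading on $U_\ep$ together with simplicity forces $M$ to be, up to parity shift, generated by an even vector, and a parity shift does not change the isomorphism type as a module). Then $M$ is a quotient of $M_\ep^\mu$, hence equals $L_\ep^\mu$, which is finite-dimensional, so by the equivalence $\mu=\pm q^n$ for some $n\in\ep+2\N$ and $M\cong L(n,\pm)$. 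The main obstacle I anticipate is the bookkeeping around the parity $\Z_2$-grading: one must be careful that ``simple weight module'' is understood in the graded (super) sense or else account for parity shifts, so that the highest weight vector can legitimately be taken even and the identification with the Verma quotient $M_\ep^\la$ (which has an \emph{even} highest weight vector $\nu$) is valid; everything else is the routine $(q,\pi)$-analogue of the $\mathfrak{sl}(2)$ computation.
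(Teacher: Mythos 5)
Your proof is correct and fills in precisely the "quick calculation" that the paper leaves implicit: you make the three equivalent conditions rigorous, compute from $E_\ep F_\ep^{(t)}\nu = \pi F_\ep^{(t-1)}[K_\ep;1-t]\nu$ that a singular vector at step $t=n+1$ requires $\la^2=\pi^{n+\ep}q^{2n}$, and observe this has a solution $\la\in\Q(q)$ (namely $\la=\pm q^n$) exactly when $n\equiv\ep\pmod 2$. This is the same Verma-module argument the paper is sketching, and your remark about parity shifts is the right caveat for the final exhaustiveness claim.
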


This result should be compared to the classification of
finite-dimensional simple modules for $\CU$ below.

\begin{prop} \cite{Zou}   \label{prop:zou}
For each $n\in \N$, there are two non-isomorphic $(n+1)$-dimensional
$\CU$-modules over $\C(q)$ of highest weight $\pi^{n^2/2}q^{n}$.
Moreover, any finite-dimensional $\CU$-module is completely
reducible.
\end{prop}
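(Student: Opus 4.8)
The plan is to obtain both assertions from Proposition~\ref{prop:Umod} together with the isomorphism $\flat\colon{}^\C U_1\to\CU$ of Lemma~\ref{lem:UU}; the one genuinely non-formal ingredient is the pair of structural facts that make highest weight theory run over the non-algebraically-closed field $\C(q)$, namely that every finite-dimensional $\CU$-module is a weight module (so that $K$ acts semisimply) and that the category of finite-dimensional $\CU$-modules is semisimple. I will grant these while carrying out the classification and establish them afterwards.

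Granting that $K$ acts semisimply on a finite-dimensional module, its eigenvalues are finite in number and lie in $\C(q)$ (the rationality falls out of the Verma analysis below), so a maximal eigenvalue $\la$ exists and any nonzero $\la$-eigenvector $v$ satisfies $E v=0$. If $M$ is simple, then $M=\CU v$ is a quotient of the Verma module ${}^\C M_0^\la=\C(q)\otimes_{\Q(q)}M_0^\la$. Since $M_0^\la$ has one-dimensional weight spaces on which $F_0$ acts injectively, its only nonzero proper submodule is $\mathrm{span}\set{F_0^{(k)}\nu : k\ge k_0}$ for the unique $k_0\ge 1$, if any, making $E_0F_0^{(k_0)}\nu=\pi F_0^{(k_0-1)}[K_0;1-k_0]\nu$ vanish, which forces $\la^2=\pi^{\,k_0-1}q^{2(k_0-1)}$; hence ${}^\C M_0^\la$ has at most one finite-dimensional quotient, which is then simple of dimension $k_0$, and $M$ must be it. With $n+1=\dim M$ this reads $\la^2=\pi^n q^{2n}$, so $\la=\pm\pi^{n^2/2}q^n$ (using $\pi^{n^2/2}=1$ for $n$ even, $\pi^{n^2/2}=\sqrt{\pi}$ for $n$ odd, and $(\sqrt{\pi})^{n^2}=\pi^{n^2/2}$). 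There are therefore at most two such $M$ for each $n$, with highest weights $\pm\pi^{n^2/2}q^n$, and they are non-isomorphic --- distinct highest weights for fixed $n$, distinct dimensions for distinct $n$. Existence I would supply from Proposition~\ref{prop:Umod}: when $n$ is even, $\pi^{n^2/2}=1$ and $\C(q)\otimes_{\Q(q)}L_0^{\pm q^n}$ does it, base change preserving simplicity because $L_0^{\pm q^n}$ is absolutely simple (its highest weight vector is unique up to scalar); when $n$ is odd, viewing $\C(q)\otimes_{\Q(q)}L_1^{\pm q^n}$ as a $\CU$-module through $\flat^{-1}\colon\CU\to{}^\C U_1$ gives an $(n+1)$-dimensional simple module whose highest weight, since $\flat^{-1}(K_0)=\sqrt{\pi}\,K_1$, equals $\sqrt{\pi}\cdot(\pm q^n)=\pm\pi^{n^2/2}q^n$. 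This simultaneously records that the odd-dimensional finite-dimensional simple $\CU$-modules arise from $U_0$ and the even-dimensional ones from $U_1$.

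What is left --- that finite-dimensional $\CU$-modules are weight modules and are semisimple --- is the crux, and I would model it on the $U_q(\fsl(2))$ treatment of \cite[Ch.~2]{Jan}. Over $\overline{\C(q)}$ the generalized $K$-eigenvalues of a finite-dimensional module fall into finite $q^2$-strings; a genuine eigenvector at a string top generates a finite-dimensional quotient of a Verma module, so by the computation above that top eigenvalue is one of the $\pm\pi^{n^2/2}q^n$, and descending each string puts every generalized $K$-eigenvalue into $\C(q)$; in particular a finite-dimensional simple is a weight module, being the unique finite-dimensional quotient of a Verma module, which has a $K$-eigenbasis. One then constructs the quantum Casimir element of $\CU$ as in \cite{Jan}; it is central, acts on each $L(n,\pm)$ by a scalar, and these scalars are pairwise distinct because $q$ is an indeterminate. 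Splitting a finite-dimensional module into generalized Casimir-eigenspaces reduces complete reducibility --- and hence semisimplicity of $K$ --- to the splitting of a self-extension $0\to L(n,\epsilon)\to M\to L(n,\epsilon)\to 0$: a lift to $M$ of a highest weight vector of the quotient generates a copy of the unique finite-dimensional quotient of the relevant Verma module, which maps isomorphically onto $M/L(n,\epsilon)$ and splits the sequence. The main obstacle lies squarely in this last paragraph --- keeping the $K$-eigenvalues $\C(q)$-rational and descending eigenvectors from $\overline{\C(q)}$ to $\C(q)$, which is exactly where a central element becomes indispensable --- while everything else is a faithful transcription of the $\fsl(2)$ story through $\flat$ and Proposition~\ref{prop:Umod}.
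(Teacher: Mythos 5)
The paper does not prove this statement: it is a citation of Zou's classification, and the surrounding text (Propositions~\ref{prop:Umod}, \ref{prop:ssCat} and the two remarks that follow) only places it alongside the $\Q(q)$-rational classification for comparison. So there is no in-paper proof to match yours against, and your reconstruction has to stand on its own. It does, in outline. Deriving $\la^2=\pi^n q^{2n}$ from the Verma analysis and then realizing the two $(n+1)$-dimensional simples as $\C(q)\otimes L_0^{\pm q^n}$ for even $n$ and as $\flat^{-1}$-pullbacks of $\C(q)\otimes L_1^{\pm q^n}$ for odd $n$ is correct, and it makes explicit the bridge between Propositions~\ref{prop:Umod} and~\ref{prop:zou} that the paper only gestures at. Two small points of hygiene: the absolute simplicity of $L_\ep^{\pm q^n}$ is cleaner to justify from the fact that its weight spaces are one-dimensional with pairwise distinct $K$-eigenvalues already in the ground field (so base change stays a string of one-dimensional weight spaces and the $E,F$-action still connects all of them), not just from the uniqueness of the highest weight vector; and you should flag the mismatch between the statement's single phrase ``highest weight $\pi^{n^2/2}q^n$'' and the two weights $\pm\pi^{n^2/2}q^n$ your computation produces --- either the cited phrasing silently suppresses the sign, or Zou's ``two'' refers to a parity shift, and you should say which you intend.

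Your third paragraph --- that finite-dimensional $\CU$-modules are weight modules and completely reducible --- is the genuinely nontrivial part and it is only sketched, but it is the right sketch: pass to $\overline{\C(q)}$, use the $q^2$-string structure on generalized $K$-eigenspaces to force every generalized $K$-eigenvalue to be of the $\C(q)$-rational form $\pm\pi^{n^2/2}q^{n-2k}$, then run the Casimir argument. One step needs to be said out loud: Proposition~\ref{prop:ssCat}(2) shows $C_0^2$ separates distinct $n$ but not $L(n,+)$ from $L(n,-)$, so after splitting by generalized $C_0^2$-eigenvalues you must rule out both self-extensions of $L(n,\ep)$ (your highest-weight-lift argument) and extensions between $L(n,+)$ and $L(n,-)$, the latter by the disjointness of their $K$-weight supports --- exactly the weight-dimension argument invoked in the proof of Proposition~\ref{prop:ssCat}(3). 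With these points filled in, the proof is complete and consistent with the role the paper assigns to the cited result.
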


\begin{remark}
Note that the weights of the simple $\CU$-modules for $n$ odd in
Proposition~\ref{prop:zou} involve complex number $\sqrt{\pi}$, and
so they cannot be realized as $U_0$-modules over $\Q(q)$. This
partially motivated our introduction of $U_1$.
\end{remark}

\begin{remark}
Proposition~\ref{prop:zou} remains to be valid if we classify
finite-dimensional modules of $Q[\sqrt{\pi}](q)\otimes_{\Q(q)}U_0$
over the field $\Q[\sqrt{\pi}](q)$ instead of $\C(q)$.

Note that the ``weight" $U_\epsilon$-module condition in
Proposition~\ref{prop:Umod} is necessary over $\Q(q)$. Indeed, if we
view the $Q[\sqrt{\pi}](q)$-vector space underlying a 2-dimensional
module of $Q[\sqrt{\pi}](q)\otimes_{\Q(q)}U_0$ as a $\Q(q)$-vector
space, we obtain a 4-dimensional $U_0$-module which is not a weight
module.
\end{remark}

\subsection{Complete reducibility}

It has been known that there is a Casimir element for (a version of)
the algebra $U_0$ (see e.g. \cite{AB}). Let $\ep\in \{0,1\}$. We
adapt this construction to  the algebras $U_\ep$. We will proceed as
in \cite[\S\S 2.7-2.9]{Jan}. Set

\begin{equation}\label{eq:qcasimir}
C_\epsilon=\pi F_\epsilon E_\epsilon + \frac{ \pi^{1-\epsilon}
K_\epsilon q + K_\epsilon^{-1} q^{-1}}{ (\pi q - q^{-1})^2 }.
\end{equation}
One rewrites using defining relations of $U_\ep$ that
\[
C_\epsilon=E_\epsilon F_\epsilon + \frac{\pi^\epsilon K_\epsilon
q^{-1} + \pi K_\epsilon^{-1} q}{ (\pi q - q^{-1})^2 }.
\]
We note that $\omega_\ep(C_\ep)=\tau_\ep(C_\ep)=\pi^\ep C_\ep$.
Also, we have that
\begin{equation}  \label{eq:scent}
C_\epsilon E_\epsilon =\pi E_\epsilon C_\epsilon, \quad
C_\epsilon F_\epsilon =\pi F_\epsilon C_\epsilon, \quad
C_\epsilon K_\epsilon =K_\epsilon C_\epsilon.
\end{equation}
Indeed, clearly we have $
C_\epsilon K_\epsilon =K_\epsilon C_\epsilon.$ We compute
\begin{align*}
C_\epsilon E_\epsilon &=E_\epsilon F_\epsilon E_\epsilon
+ \frac{\pi^\epsilon K_\epsilon q^{-1}
+ \pi K_\epsilon^{-1} q}{ (\pi q - q^{-1})^2 }E_\epsilon
 \\
&=\pi\parens{ \pi E_\epsilon F_\epsilon E_\epsilon +
E_\epsilon\frac{\pi \pi^\epsilon K_\epsilon q + K_\epsilon^{-1}
q^{-1}}{ (\pi q - q^{-1})^2 }}=\pi E_\epsilon C_\epsilon.
\end{align*}
The remaining identity in \eqref{eq:scent} can be checked similarly.
It follows by \eqref{eq:scent} that $C_\ep^2$ is in the center of
$U_\ep$.

\begin{prop}  \label{prop:ssCat}
Let $\ep\in \{0,1\}$ and $n\in\Z$. Then,
\begin{enumerate}
\item
$C_\ep^2$ acts on the Verma module $M^{\pm q^n}_\ep$ as scalar
multiplication by $\frac{[n+1]^2}{(\pi q - q^{-1})^2}$.

\item
$C_\ep^2$ acts on $M^{\pm q^n}_\ep$ and $M^{\pm q^m}_\ep$  by the
same scalar if and only if $n=m$ or $n=-m-2\in \Z$; in particular,
$C_\ep^2$ acts as a different scalar on different pairs $L(n,\pm)$,
for $n\in\ep+2\Z_+$.

\item Any finite-dimensional weight $U_\ep$-module is completely reducible.
\end{enumerate}
\end{prop}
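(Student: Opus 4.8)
The plan is to follow the standard Casimir-element argument for $U_q(\fsl_2)$ as in \cite[\S\S 2.7--2.9]{Jan}, adapting the bookkeeping to track the sign $\pi$. For part (1), I would compute the action of $C_\ep^2$ on the highest weight vector $\nu$ of $M_\ep^{\pm q^n}$ directly. Using the form $C_\epsilon = E_\epsilon F_\epsilon + \frac{\pi^\epsilon K_\epsilon q^{-1} + \pi K_\epsilon^{-1} q}{(\pi q - q^{-1})^2}$ is not quite convenient since $E_\ep\nu=0$ does not immediately apply; instead I would use $C_\epsilon = \pi F_\epsilon E_\epsilon + \frac{\pi^{1-\epsilon} K_\epsilon q + K_\epsilon^{-1} q^{-1}}{(\pi q - q^{-1})^2}$, for which $F_\ep E_\ep \nu = 0$, giving $C_\ep \nu = \frac{\pi^{1-\epsilon}(\pm q^n) q + (\pm q^n)^{-1} q^{-1}}{(\pi q - q^{-1})^2}\nu = \pm\frac{\pi^{1-\epsilon} q^{n+1} + q^{-n-1}}{(\pi q - q^{-1})^2}\nu$. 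Since $C_\ep$ is odd in the sense $C_\ep F_\ep = \pi F_\ep C_\ep$, applying $C_\ep$ again and squaring, $C_\ep^2$ is central and acts on all of $M_\ep^{\pm q^n}$ by the scalar $\bigl(\frac{\pi^{1-\epsilon} q^{n+1} + q^{-n-1}}{(\pi q - q^{-1})^2}\bigr)^2$. A short computation identifies $\pi^{1-\epsilon} q^{n+1} + q^{-n-1}$ with $\pm(\pi q - q^{-1})\cdot\frac{(\pi q)^{n+1} - q^{-(n+1)}}{\pi q - q^{-1}}$ up to the correct power of $\pi$; more precisely one checks $(\pi^{1-\epsilon} q^{n+1} + q^{-n-1})^2 = (\pi q - q^{-1})^2[n+1]^2$ using $\pi^2 = 1$ and the fact that $n \equiv \epsilon \pmod 2$ is \emph{not} assumed here — so the identity must be verified for all $n\in\Z$, which is where the parity of $\pi^{1-\epsilon}$ interacts with $(\pi q)^{n+1}$. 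This yields the stated scalar $\frac{[n+1]^2}{(\pi q - q^{-1})^2}$.

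For part (2), I would analyze when $[n+1]^2 = [m+1]^2$, equivalently $[n+1] = \pm[m+1]$. Writing out $[k] = \frac{(\pi q)^k - q^{-k}}{\pi q - q^{-1}}$ as a Laurent polynomial in $q$ (with $\pi$-twisted coefficients) and comparing, the equality $[n+1]^2 = [m+1]^2$ as elements of $\Q(q)$ forces, by looking at the extreme-degree terms, either $n+1 = m+1$ or $n+1 = -(m+1)$, i.e. $n = m$ or $n = -m-2$; the identity $[-k] = -\pi^k[k]$ from \eqref{eq:n-n} shows the second case does give equal squares. For the last clause, when $n \in \ep + 2\Z_+$ the simple module $L(n,\pm) = L_\ep^{\pm q^n}$ is a quotient of $M_\ep^{\pm q^n}$, so $C_\ep^2$ acts on it by $\frac{[n+1]^2}{(\pi q-q^{-1})^2}$; since distinct $n, n' \in \ep + 2\Z_+$ are both nonnegative, neither $n = n'$ nor $n = -n'-2$ can hold unless $n = n'$, so the scalars are distinct on distinct pairs. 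Note $C_\ep^2$ does not separate $L(n,+)$ from $L(n,-)$ — these are distinguished instead by the central element $C_\ep$ itself (or by $K_\ep$), which I would remark on but not need for part (3).

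Part (3) is the main point and the one requiring the most care. Let $M$ be a finite-dimensional weight $U_\ep$-module. First, the generalized eigenspace decomposition of $M$ under the central element $C_\ep^2$ reduces the problem to the case where $C_\ep^2$ acts by a single generalized eigenvalue, and then — since on any composition factor it acts by an honest scalar of the form $\frac{[n+1]^2}{(\pi q - q^{-1})^2}$ — by part (2) all composition factors of such a block have highest weights among a finite set, in fact (using the weight-$\Z$-grading to match the $K_\ep$-eigenvalue) all isomorphic to a single $L(n,\pm)$ or all with the Verma-type highest weights $n$ and $-n-2$. The remaining task is an $\mathrm{Ext}^1$-vanishing argument: I would show that any short exact sequence $0 \to L(n,\eta) \to M \to L(n',\eta') \to 0$ of finite-dimensional weight modules with $C_\ep^2$ acting by a single scalar splits. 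The hard part will be handling the self-extensions, i.e. $n = n'$, $\eta = \eta'$: here one uses that $C_\ep$ (not just $C_\ep^2$) acts on $L(n,\eta)$ by a scalar $c_{n,\eta}$ with $c_{n,+} \neq c_{n,-}$ and $c_{n,\eta}$ satisfies a known quadratic, and one argues via the structure of $M$ as a module over the (commutative-up-to-sign) subalgebra generated by $C_\ep$ and $K_\ep$ that $M$ decomposes. Concretely, following Jantzen, I would produce an explicit projector: inside $M$ the highest weight vectors (kernel of $E_\ep$ in the top weight space) form a subspace on which $K_\ep$ and $C_\ep$ act, and a dimension count using the PBW basis $F_\ep^{(k)}\nu$ together with the explicit formula $E_\ep F_\ep^{(t)}\nu = \pi F_\ep^{(t-1)}[K_\ep; 1-t]\nu$ — which shows the singular vectors occur exactly at the predicted place — forces the socle to exhaust $M$. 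I expect the extension-vanishing step, particularly ruling out nonsplit self-extensions and nonsplit extensions between $L$'s with the two ``linked'' weights $n$ and $-n-2$, to be the main obstacle; everything else is a transcription of the $\fsl_2$ argument with $\pi$'s inserted.
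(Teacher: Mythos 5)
Your overall route --- compute $C_\ep$ on the highest weight vector, square, invoke centrality of $C_\ep^2$ for (1), elementary algebra for (2), and the Jantzen \S 2.9 argument for (3) --- is the same as the paper's. The paper handles (3) by the one-line observation that weight considerations rule out extensions between $L(n,+)$ and $L(n,-)$ (they have disjoint $K_\ep$-spectrum, so no nontrivial extension by a homogeneous argument), then cites Jantzen for the rest; your sketch of the composition-series and singular-vector-counting argument is an explicit unwinding of what the paper references, not a different route. For (2), your leading-term comparison is what the paper does implicitly, and your check that $n,n'\geq 0$ forces $n=n'$ is fine.

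There is, however, a genuine gap in your verification of (1). You assert that ``one checks $(\pi^{1-\ep}q^{n+1}+q^{-n-1})^2=(\pi q-q^{-1})^2[n+1]^2$,'' flagging the interaction of $\pi^{1-\ep}$ with $(\pi q)^{n+1}$ and then concluding it works out. It does not. Expanding with $\pi^2=1$, the left side is $q^{2n+2}+2\pi^{1-\ep}+q^{-2n-2}$ while the right side is $((\pi q)^{n+1}-q^{-n-1})^2=q^{2n+2}-2\pi^{n+1}+q^{-2n-2}$; these agree if and only if $\pi^{1-\ep}=-\pi^{n+1}$, i.e.\ $n\not\equiv\ep\pmod 2$. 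But the parities actually relevant for the simple modules $L(n,\pm)=L_\ep^{\pm q^n}$, and for the second half of (2), are precisely $n\equiv\ep$, where the identity \emph{fails}. (Concretely, for $\ep=0$, $n=0$: $C_0\nu=\frac{q^{-1}-q}{(q+q^{-1})^2}\nu$, so $C_0^2\nu=\frac{(q-q^{-1})^2}{(q+q^{-1})^4}\nu$, whereas $[1]^2/(\pi q-q^{-1})^2=\frac{1}{(q+q^{-1})^2}=\frac{(q+q^{-1})^2}{(q+q^{-1})^4}$.) The honest scalar from \eqref{eq:qcasimir} is $\left(\frac{\pi^{1-\ep}q^{n+1}+q^{-n-1}}{(\pi q-q^{-1})^2}\right)^2$. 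To be fair, the paper's own proof states the same $\frac{[n+1]^2}{(\pi q-q^{-1})^2}$ without verification, so you have inherited a slip from the source; but you raised the parity issue explicitly and dismissed it incorrectly, which is what needs fixing. Fortunately this does not propagate: with the correct scalar, your extreme-degree comparison still gives ``same scalar iff $n=m$ or $n=-m-2$'' (the equality $\pi^{1-\ep}q^{n+1}+q^{-n-1}=\pm(\pi^{1-\ep}q^{m+1}+q^{-m-1})$ forces $n+1=\pm(m+1)$ in either case), so (2) and (3) go through unchanged.
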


\begin{proof}
Let $\nu$ be the highest weight vector of $\Lambda_n$.
Using (\ref{eq:qcasimir}), we see that
$C_\ep^2\nu=\frac{[n+1]^2}{(\pi q - q^{-1})^2}\nu$. Since any $m\in
M^{\pm q^n}_\ep$ can be represented as $m=u\nu$ for $u\in U_\ep$,
$C_\ep^2m=C_\ep^2u\nu=uC_\ep^2\nu =(\pi q -
q^{-1})^{-2}{[n+1]^2}  m$, whence (1).

Now $[n+1]^2=[m+1]^2$ if and only if $(\pi q)^{n+1} - q^{-n-1}=\pm
((\pi q)^{m+1} - q^{-m-1})$, whence (2). For a given $n\in \Z_+$, by
weight considerations there is no nontrivial extension between
$L(n,+)$ and $L(n,-)$. We can prove (3) as is done in \cite[\S
2.9]{Jan}; that is, pick a composition series for $M$ and use a weight dimension argument
to show that composition factors are direct summands.
\end{proof}

\section{The Hopf superalgebra $U$}
 \label{sec:algU}
\subsection{Algebra $U$}
By the similarities of $U_\epsilon$ and $U_q(\fsl(2))$, we hope to
make sense that the tensor product of two odd-weight modules should
decompose as a sum of even-weight modules. It is therefore
convenient to combine $U_0$ and $U_1$ into a single algebra.

\begin{definition}  \label{def:Usplit}
The  algebra $U$ is defined to be the direct sum of algebras
$\cU=U_0\oplus U_1$, whose multiplication is denoted by $m$. Let
$e_0=(1,0)$ and $e_1=(0,1)$ be the central idempotents of $U$ with
$U_0=e_0\cU$, $U_1=e_1 \cU$ and $e_0e_1=0$; hence $\cU$ is a unital
algebra with $1=e_0+e_1$.
\end{definition}
Another possible way is to define a smaller single algebra so that both $U_0$ and $U_1$ become the quotient algebras, but we will not follow that route in this paper.

It is immediate that the direct sums (over $\ep=0,1$) of the
(anti-)automorphisms $\psi_\epsilon$, $\omega_\epsilon$,
$\tau_\epsilon$, and $\rho_\epsilon$ define (anti-)automorphisms
$\psi$, $\omega$, $\tau$, and $\rho$ on $\cU$, respectively. We also
have the $\cA$-subalgebra ${}_\cA U=\vphantom{|}_\cA U_0\oplus\vphantom{|}_\cA
U_1$ and a $\Z$-grading $U =\oplus_{i\in2\Z} U(i)$, where $\cU(i)=
U_0(i)+U_1(i)$. Since $\cU$ is a direct sum of unital algebras, each
$\cU$-module $M$ decomposes as $M=M_0\oplus M_1$ where $M_\ep=e_\ep
M$ is a $U_\ep$-module ($\ep=0,1$), and $U_1M_0=U_0M_1=0$. We shall
call a $\cU$-module $M=M_0\oplus M_1$ a weight module if $M_0$ and
$M_1$ are weight modules. We may restate Proposition~\ref{prop:Umod}
and Proposition~\ref{prop:ssCat}(3) in a form more commensurate with
Proposition~\ref{prop:zou} and also with representation theory of
$U_q(\fsl(2))$ (\cite{Jan}).

\begin{prop}\label{prop:cUmod}
For each $n\in \N$, there is a pair of non-isomorphic
$(n+1)$-dimensional simple $\cU$-modules denoted by $L(n,\pm)$ of
highest weight $\pm q^{n}$. Any finite dimensional simple weight
$\cU$-module is isomorphic to one such module. Moreover, any
finite-dimensional weight $U$-module is completely reducible.
\end{prop}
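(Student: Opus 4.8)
The plan is to deduce Proposition~\ref{prop:cUmod} by reassembling the results already established for the individual algebras $U_0$ and $U_1$. Since $\cU = U_0\oplus U_1$ with orthogonal central idempotents $e_0,e_1$, any $\cU$-module $M$ splits canonically as $M=M_0\oplus M_1$ with $M_\ep=e_\ep M$ a $U_\ep$-module, and $U_{1-\ep}M_\ep=0$. A submodule of $M$ likewise splits, so $M$ is simple precisely when one of $M_0,M_1$ is a simple $U_\ep$-module and the other is $0$. Hence the simple weight $\cU$-modules are exactly the simple weight $U_0$-modules together with the simple weight $U_1$-modules, each viewed as a $\cU$-module via the projection $\cU\to U_\ep$.

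First I would invoke Proposition~\ref{prop:Umod}(2): for $\ep=0$ the simple weight modules are the $(n+1)$-dimensional $L_0^{\pm q^n}$ with $n\in 2\N$, and for $\ep=1$ they are the $(n+1)$-dimensional $L_1^{\pm q^n}$ with $n\in 1+2\N$. The union over $\ep=0,1$ of the index sets $\{n\in 2\N\}$ and $\{n\in 1+2\N\}$ is exactly all of $\N$, so for each $n\in\N$ we obtain precisely one pair of non-isomorphic $(n+1)$-dimensional simple $\cU$-modules, which we name $L(n,\pm)$. Non-isomorphism within a pair and across different $n$ follows from the corresponding statements in Proposition~\ref{prop:Umod}(2) and the fact that a $\cU$-isomorphism must respect the $e_\ep$-decomposition; this establishes the classification of simple weight $\cU$-modules.

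For complete reducibility of an arbitrary finite-dimensional weight $\cU$-module $M$, I would again use the splitting $M=M_0\oplus M_1$ into a finite-dimensional weight $U_0$-module and a finite-dimensional weight $U_1$-module. By Proposition~\ref{prop:ssCat}(3) applied to each $\ep$, both $M_0$ and $M_1$ are completely reducible as $U_\ep$-modules; since $\cU$-submodules of $M$ are exactly direct sums of a $U_0$-submodule of $M_0$ and a $U_1$-submodule of $M_1$, the decomposition of $M_0$ and $M_1$ into simples is automatically a decomposition of $M$ into simple $\cU$-modules. Therefore $M$ is completely reducible.

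There is no real obstacle here; the only point requiring a word of care is the bookkeeping that the two disjoint congruence classes $n\in 2\N$ and $n\in 1+2\N$ tile $\N$, so that the reindexed family $L(n,\pm)$ is well-defined and exhausts the simple objects without repetition — but this is immediate. The substance of the proposition is entirely contained in Propositions~\ref{prop:Umod} and~\ref{prop:ssCat}, and the present statement is merely their repackaging in terms of the combined algebra $\cU$, made so as to parallel Proposition~\ref{prop:zou} and the classical $U_q(\fsl(2))$ theory of~\cite{Jan}.
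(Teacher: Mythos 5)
Your proposal is correct and matches the paper's (implicit) argument exactly: the paper explicitly frames Proposition~\ref{prop:cUmod} as a restatement of Propositions~\ref{prop:Umod} and~\ref{prop:ssCat}(3) via the idempotent decomposition $M=M_0\oplus M_1$, which is precisely the reassembly you carry out. No gaps, and nothing to add.
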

We will from now on concentrate only on $L(n):=L(n,+)$, since the
cases of $L(n,-)$ is completely parallel.

\subsection{Algebra $\f$}
\label{subsec:f}

Following Lusztig (\cite{Lu}), there is a free $\Q(q)$-algebra
$\f=\Q(q)[\theta]$, where $\theta$ has $\Z$-grading $2$ and parity
$p(\theta)=1$. We have natural $\Q(q)$-algebra isomorphisms
$(\cdot)_\epsilon^\pm: \f\rightarrow U_{\epsilon}^{\pm}$ given by
$\theta\mapsto \theta_\epsilon^+=E_\epsilon$ and $\theta\mapsto
\theta_\epsilon^-=F_\epsilon$. We define the maps
$(\cdot)^\pm:\f\rightarrow \cU$ by $u^\pm={u}_0^\pm\oplus
{u}_1^\pm$; that is, it is the diagonal embedding $\theta^+=E_0+E_1$
and $\theta^-=F_0+F_1$.

We can define a bilinear form on $\f$ such that
\begin{align}
(\theta, \theta) &= (1-\pi q^{-2})^{-1},
  \label{eq:bform1} \\
(\theta^{(a)}, \theta^{(b)}) &=\delta_{a,b}\prod_{s=1}^a
\frac{\pi^{s-1}}{1-(\pi q^{-2})^s} =\delta_{a,b}\pi^a q^{\binom{a+1}{2}}(\pi
q-q^{-1})^{-a} ([a]^!)^{-1}.
\label{eq:bform2}
\end{align}
A version of this bilinear form was first introduced in \cite{HW}
for quantum Kac-Moody superalgebras including $\mathfrak{osp}(1|2)$,
with a switch of $q$ with $q^{-1}$ in \eqref{eq:bform1}.

\subsection{The coproduct}

We endow the tensor product of superalgebras with the twisted multiplication
\[
(a\otimes b) * (c\otimes d) = \pi^{p(b)p(c)} ac\otimes bd.
\]

It is known that $U_0$ is a Hopf superalgebra (cf. \cite{Zou}). The
following lemma can be regarded as an extension of the coproduct on
$U_0$ (compare \cite[3.1.3]{Lu}).

\begin{lemma} \label{lem:coprods}
For fixed $\epsilon, \kappa \in\set{0,1}$, there is a unique
(super)algebra homomorphism
$\Delta_{\epsilon,\kappa}:U_{\epsilon+\kappa}\rightarrow
U_{\epsilon} {\otimes} U_{\kappa}$ satisfying
\begin{align*}
\Delta_{\epsilon, \kappa}(E_{\epsilon +
\kappa})&=E_{\epsilon}\otimes e_\kappa + \pi^{\epsilon}
K_{\epsilon}\otimes E_{\kappa},
 \\
\Delta_{{\epsilon},{\kappa}}(F_{\epsilon +
\kappa})&=F_{\epsilon}\otimes K_{\kappa}^{-1}+ e_\epsilon \otimes
F_{\kappa},
 \\
\Delta_{{\epsilon},{\kappa}}(K_{\epsilon +
\kappa})&=K_{\epsilon}\otimes K_{\kappa}.
\end{align*}
\end{lemma}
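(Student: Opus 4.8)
The plan is to prove existence and uniqueness of $\Delta_{\epsilon,\kappa}$ by checking that the proposed assignments on generators respect the defining relations of $U_{\epsilon+\kappa}$ from Remark~\ref{rem:epsrels}, using the twisted multiplication $*$ on $U_\epsilon \otimes U_\kappa$. Uniqueness is immediate: $E_{\epsilon+\kappa}, F_{\epsilon+\kappa}, K_{\epsilon+\kappa}^{\pm1}$ generate $U_{\epsilon+\kappa}$, so a (super)algebra homomorphism is determined by its values on them. For existence, by the universal property it suffices to verify that the images satisfy relations (1)--(3). Relation (1), $K_{\epsilon+\kappa}K_{\epsilon+\kappa}^{-1}=1$, is trivial since $\Delta(K_{\epsilon+\kappa}^{-1}) = K_\epsilon^{-1}\otimes K_\kappa^{-1}$ and the tensor factors are group-like. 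Relation (2), the $K$-conjugation of $E$ and $F$, follows from a short computation: conjugating $E_\epsilon\otimes e_\kappa + \pi^\epsilon K_\epsilon\otimes E_\kappa$ by $K_\epsilon\otimes K_\kappa$ scales each summand by $q^2$ (using $K_\epsilon E_\epsilon K_\epsilon^{-1}=q^2 E_\epsilon$ in the first factor, $K_\kappa E_\kappa K_\kappa^{-1}=q^2E_\kappa$ in the second), and similarly $F$ gets scaled by $q^{-2}$.

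The substantive step is relation (3): one must verify
\[
\Delta(E_{\epsilon+\kappa})*\Delta(F_{\epsilon+\kappa}) - \pi\,\Delta(F_{\epsilon+\kappa})*\Delta(E_{\epsilon+\kappa}) = \frac{\pi^{\epsilon+\kappa}\,K_\epsilon\otimes K_\kappa - K_\epsilon^{-1}\otimes K_\kappa^{-1}}{\pi q - q^{-1}}.
\]
I would expand both $*$-products using the twisted multiplication rule $(a\otimes b)*(c\otimes d) = \pi^{p(b)p(c)}ac\otimes bd$, carefully tracking the parity signs: $E_\epsilon, F_\epsilon$ are odd and $K_\epsilon^{\pm1}, e_\epsilon$ are even. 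Expanding $\Delta(E)*\Delta(F)$ produces four terms; the two ``diagonal'' cross terms involve $E_\epsilon$ times $F_\epsilon$ in the first factor (or in the second factor), which one simplifies using relation (3) for $U_\epsilon$ (resp.\ $U_\kappa$), namely $E_\epsilon F_\epsilon - \pi F_\epsilon E_\epsilon = \frac{\pi^\epsilon K_\epsilon - K_\epsilon^{-1}}{\pi q - q^{-1}}$; the two ``off-diagonal'' terms involve $K_\epsilon$ or $K_\epsilon^{-1}$ paired with $E_\kappa$ or $F_\kappa$, and these must cancel against the corresponding terms from $\pi\,\Delta(F)*\Delta(E)$ after moving $K$'s past $E$'s and $F$'s using relation (2). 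The bookkeeping of the powers of $\pi$ — those coming from the twist $\pi^{p(b)p(c)}$, from the explicit $\pi^\epsilon$ in $\Delta(E)$, and from the $\pi$ in the super-commutator — is where errors are most likely, so I would organize the computation by collecting the coefficient of each of the four ``shapes'' of term ($K_\epsilon\otimes E_\kappa F_\kappa$-type, $E_\epsilon F_\epsilon \otimes K_\kappa^{-1}$-type, etc.) separately and checking each vanishes or matches.

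The main obstacle is thus purely the sign/parity bookkeeping in relation (3): ensuring that the $\pi$-powers from the three independent sources combine correctly so that the off-diagonal terms cancel and the diagonal terms assemble into $\frac{\pi^{\epsilon+\kappa}K_\epsilon\otimes K_\kappa - K_\epsilon^{-1}\otimes K_\kappa^{-1}}{\pi q - q^{-1}}$. A useful sanity check is the specialization $\pi = 1$, which must reduce to the familiar coproduct identity for $U_q(\fsl(2))$ (cf.\ \cite[3.1.3]{Lu}), and the case $\epsilon = \kappa = 0$, which must recover the known Hopf structure on $U_0$ from \cite{Zou}. Once relation (3) is confirmed, existence of the homomorphism follows formally, completing the proof.
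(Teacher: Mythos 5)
Your proposal is correct and follows essentially the same route as the paper: the paper also dispenses quickly with relations (1)--(2) and only writes out the verification of relation (3), expanding the twisted products, cancelling the off-diagonal terms, and assembling the two super-commutators $[E_\epsilon,F_\epsilon]\otimes K_\kappa^{-1}$ and $\pi^\epsilon K_\epsilon\otimes[E_\kappa,F_\kappa]$ into the target $\frac{\pi^{\epsilon+\kappa}K_\epsilon\otimes K_\kappa - K_\epsilon^{-1}\otimes K_\kappa^{-1}}{\pi q - q^{-1}}$. You would of course need to actually carry out the four-term bookkeeping you outline, but your organization of the calculation and the sanity checks you propose ($\pi=1$ and $\epsilon=\kappa=0$) match the spirit of the argument.
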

\begin{proof}
In the following, we shall suppress the subscripts on elements of
$U_{\epsilon+\kappa}$ since they are clear from context. We need to
prove that the defining relations of $U_{\epsilon+\kappa}$ are
preserved by $\Delta_{\epsilon,\kappa}$. We will only check the most
involved case as follows:
\begin{align*}
\Delta_{{\epsilon},{\kappa}}(E_{})\Delta_{{\epsilon},{\kappa}}(F_{})
&-\pi \Delta_{{\epsilon},{\kappa}}(F_{})\Delta_{{\epsilon},{\kappa}}(E_{})
 \\
&= [E_{\epsilon},F_{\epsilon}]\otimes
K_{\kappa}^{-1}+\pi^{{\epsilon}}K_{\epsilon}\otimes
[E_{\kappa},F_{\kappa}]
 \\
&= \frac{ (\pi^{\epsilon} K_{\epsilon}-K_{\epsilon}^{-1})\otimes
K_{\kappa}^{-1}+\pi^{{\epsilon}}K_{\epsilon}\otimes(\pi^{\kappa}
K_{\kappa} - K_{\kappa}^{-1})}{\pi q - q^{-1}}
 \\
&=\frac{\pi^{{\epsilon}+{\kappa}}K_{\epsilon}\otimes K_\kappa -
K_{\epsilon}^{-1}\otimes K_{\kappa}^{-1}}{\pi q - q^{-1}}
=\Delta_{{\epsilon},{\kappa}}\parens{\frac{\pi^{{\epsilon +
\kappa}}K_{} - K_{}^{-1}}{\pi q - q^{-1}}}.
\end{align*}
The lemma is proved.
\end{proof}

\begin{lemma}\label{lem:coprod2.2}
The maps $\Delta_{\epsilon,\kappa}$ are coassociative, that is, for
$\epsilon, \kappa, \iota\in \set{0,1}$, the following diagram is
commutative:
\begin{center}
\begin{tikzpicture}[
cross line/.style={preaction={draw=white, -,
line width=6pt}}]
\matrix (m)
[matrix of math nodes, row sep=3em, column sep=4em, ampersand replacement=\&]
{ U_{\epsilon+\kappa+\iota}\&  U_{\epsilon}\otimes U_{\kappa+\iota}  \\
U_{\epsilon+\kappa}\otimes U_{\iota} \& U_{\epsilon}\otimes U_{\kappa} \otimes U_{\iota} \&  \\};
\draw[->] (m-1-1) -- (m-1-2) node[midway, above] {$\Delta_{\epsilon,\kappa+\iota}$};
\draw[->] (m-1-1) -- (m-2-1) node[midway, left] {$\Delta_{\epsilon+\kappa,\iota}$};
\draw[->] (m-2-1) -- (m-2-2) node[midway, below] {$\Delta_{\epsilon,\kappa}\otimes \id$};
\draw[->] (m-1-2) -- (m-2-2) node[midway, right] {$\id\otimes \Delta_{\kappa,\iota}$};
\end{tikzpicture}
\end{center}
\end{lemma}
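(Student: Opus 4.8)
The plan is to exploit that all the arrows in the diagram are homomorphisms of superalgebras, so that the two composites $(\Delta_{\epsilon,\kappa}\otimes\id)\circ\Delta_{\epsilon+\kappa,\iota}$ and $(\id\otimes\Delta_{\kappa,\iota})\circ\Delta_{\epsilon,\kappa+\iota}$, both mapping $U_{\epsilon+\kappa+\iota}$ into the triple tensor product $U_{\epsilon}\otimes U_{\kappa}\otimes U_{\iota}$, are themselves algebra homomorphisms; it then suffices to verify that they agree on the algebra generators $E$, $F$, $K$, $K^{-1}$ of $U_{\epsilon+\kappa+\iota}$.

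First I would observe that each $\Delta_{\epsilon,\kappa}$ of Lemma~\ref{lem:coprods} is \emph{parity-preserving}, since $E$ and $F$ are sent to odd elements and $K^{\pm1}$ to even elements of $U_{\epsilon}\otimes U_{\kappa}$. Combined with the fact (Lemma~\ref{lem:coprods}) that $\Delta_{\epsilon,\kappa}$ is an algebra map for the twisted multiplication $*$, this shows that $\Delta_{\epsilon,\kappa}\otimes\id$ and $\id\otimes\Delta_{\kappa,\iota}$ are again algebra maps for the twisted multiplications on the relevant tensor products: the sign $\pi^{p(x)p(b)}$ governing $(a\otimes x)*(b\otimes y)$ is unchanged when $b$ is replaced by $\Delta_{\epsilon,\kappa}(b)$, precisely because $\Delta_{\epsilon,\kappa}$ preserves parity. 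I also use the standard coherent associativity of the twisted tensor product, which identifies $(U_{\epsilon}\otimes U_{\kappa})\otimes U_{\iota}$ with $U_{\epsilon}\otimes(U_{\kappa}\otimes U_{\iota})$ as superalgebras.

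Next I would evaluate the two composites on generators. On $K_{\epsilon+\kappa+\iota}$ both visibly give $K_{\epsilon}\otimes K_{\kappa}\otimes K_{\iota}$. On $E_{\epsilon+\kappa+\iota}$, feeding $\Delta_{\epsilon+\kappa,\iota}(E)=E_{\epsilon+\kappa}\otimes e_{\iota}+\pi^{\epsilon+\kappa}K_{\epsilon+\kappa}\otimes E_{\iota}$ through $\Delta_{\epsilon,\kappa}\otimes\id$ yields
\[
E_{\epsilon}\otimes e_{\kappa}\otimes e_{\iota}+\pi^{\epsilon}K_{\epsilon}\otimes E_{\kappa}\otimes e_{\iota}+\pi^{\epsilon+\kappa}K_{\epsilon}\otimes K_{\kappa}\otimes E_{\iota},
\]
and expanding $(\id\otimes\Delta_{\kappa,\iota})\circ\Delta_{\epsilon,\kappa+\iota}(E)$ produces the identical expression; the computation for $F_{\epsilon+\kappa+\iota}$ is entirely parallel, both composites returning $F_{\epsilon}\otimes K_{\kappa}^{-1}\otimes K_{\iota}^{-1}+e_{\epsilon}\otimes F_{\kappa}\otimes K_{\iota}^{-1}+e_{\epsilon}\otimes e_{\kappa}\otimes F_{\iota}$. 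Since $E$, $F$, $K$, $K^{-1}$ generate $U_{\epsilon+\kappa+\iota}$ and both composites are algebra maps agreeing on these, they coincide, giving the commutativity of the diagram.

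The only step that requires a little care is the one in the second paragraph — confirming that tensoring with an identity map preserves the property of being an algebra map for the twisted multiplication — and even this collapses to the trivial remark that $\Delta_{\epsilon,\kappa}$ is parity-preserving; the remaining verifications on generators are short and purely mechanical.
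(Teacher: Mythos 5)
Your proof is correct and follows essentially the same route as the paper: both reduce the problem to checking equality on the generators $K$, $E$, $F$ (the $K^{-1}$ case being immediate from $K$), and the generator computations you display match the paper's exactly. The only difference is that you spell out, more carefully than the paper does, why $\Delta_{\epsilon,\kappa}\otimes\id$ and $\id\otimes\Delta_{\kappa,\iota}$ remain superalgebra homomorphisms for the twisted tensor multiplication, via the parity-preserving observation -- a small but legitimate point of extra rigor that the paper leaves implicit.
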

\begin{proof}
We shall suppress subscripts on elements in
$U_{\epsilon+\kappa+\iota}$. It suffices to check the commutativity
on the generators; it is trivially true on $K$. We compute
\begin{align*}
(\id\otimes \Delta_{\kappa,\iota}) \circ
\Delta_{\epsilon,\kappa+\iota}(F)
 &=F_\epsilon\otimes K_\kappa^{-1}
\otimes K_\iota^{-1}+ e_\epsilon\otimes F_\kappa\otimes
K_\iota^{-1}+e_\epsilon\otimes e_\kappa \otimes F_\iota,
 \\
(\Delta_{\epsilon,\kappa}\otimes \id) \circ
\Delta_{\ep+\kappa,\iota}(F)
 &=F_\epsilon\otimes K_\kappa^{-1}
\otimes K_\iota^{-1}+ e_\epsilon\otimes F_\kappa\otimes
K_\iota^{-1}+e_\epsilon\otimes e_\kappa \otimes F_\iota,
 \\
(\id\otimes \Delta_{\kappa,\iota}) \circ
\Delta_{\epsilon,\kappa+\iota}(E)
 &=E_\epsilon \otimes e_\kappa
\otimes e_\iota+ \pi^{\epsilon} K_\epsilon\otimes E_\kappa\otimes
e_\iota+\pi^{\epsilon+\kappa} K_\epsilon\otimes K_\kappa \otimes
E_\iota,
 \\
(\Delta_{\epsilon,\kappa}\otimes\id) \circ \Delta_{\epsilon +
\kappa,\iota}(E)
 &=E_\epsilon \otimes e_\kappa \otimes e_\iota+
\pi^{\epsilon} K_\epsilon\otimes E_\kappa\otimes
e_\iota+\pi^{\epsilon+\kappa} K_\epsilon\otimes K_\kappa \otimes
E_\iota.
\end{align*}
The lemma is proved.
\end{proof}

\begin{prop}
The superalgebra $\cU$ endowed with the additional structures  below
is a Hopf superalgebra:
\begin{enumerate}
\item
a coproduct $\Delta:\cU\rightarrow \cU\otimes \cU$ defined by
$\Delta=(\Delta_{0,0}+\Delta_{1,1})\oplus
(\Delta_{0,1}+\Delta_{1,0})$;

\item
a counit $\varepsilon:\cU\rightarrow \Q(q)$ defined by
$\varepsilon(e_1)=\varepsilon(E_0)=\varepsilon(F_0)=0$ and
$\varepsilon(K_0)=1$;

\item
an antipode $S:\cU\rightarrow \cU$ defined by
$S(K_\epsilon)=K_\epsilon^{-1}$, $S(F_\epsilon)=-F_\epsilon
K_\epsilon$ and $S(E_\epsilon)=-\pi^\epsilon K_\epsilon^{-1}
E_\epsilon$, for $\ep=0,1$.
\end{enumerate}
\end{prop}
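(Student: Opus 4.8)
The plan is to verify the three bialgebra/Hopf axioms—coassociativity of $\Delta$, the counit property for $\varepsilon$, and the antipode property for $S$—on the algebra $\cU = U_0 \oplus U_1$, reducing each to a computation on the graded pieces $\Delta_{\epsilon,\kappa}$. The key structural observation is that $\cU$ decomposes into its central idempotent summands, so every map and every axiom can be checked componentwise. Concretely, $\Delta$ restricted to $U_0$ is $\Delta_{0,0} + \Delta_{1,1}$ (landing in $U_0\otimes U_0 \oplus U_1 \otimes U_1$) and $\Delta$ restricted to $U_1$ is $\Delta_{0,1} + \Delta_{1,0}$; it is a superalgebra homomorphism by Lemma~\ref{lem:coprods} applied summand-by-summand.

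First I would dispose of coassociativity: this is immediate from Lemma~\ref{lem:coprod2.2}, since $(\Delta\otimes\id)\circ\Delta$ and $(\id\otimes\Delta)\circ\Delta$ decompose, on the summand $U_\delta$, into the sum over all $(\epsilon,\kappa,\iota)$ with $\epsilon+\kappa+\iota \equiv \delta$ of the two composites in the commuting square of that lemma. Next I would check the counit axiom $(\varepsilon\otimes\id)\circ\Delta = \id = (\id\otimes\varepsilon)\circ\Delta$; because $\varepsilon$ kills $e_1$ (hence all of $U_1$) and sends $K_0\mapsto 1$, $E_0,F_0\mapsto 0$, only the $\Delta_{0,0}$ piece survives the contraction, and one checks on the generators $E_0,F_0,K_0^{\pm1}$ of $U_0$ that e.g.\ $(\varepsilon\otimes\id)\Delta(E_0) = \varepsilon(E_0)e_0 + \varepsilon(K_0)E_0 = E_0$, and similarly for $F_0,K_0$, with the analogous check on the other side; on $U_1$ both composites vanish, consistent with $\varepsilon$ vanishing there. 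For the antipode, I would verify $m\circ(S\otimes\id)\circ\Delta = \iota\circ\varepsilon = m\circ(\id\otimes S)\circ\Delta$ on the generators $E_\epsilon,F_\epsilon,K_\epsilon^{\pm1}$, using the twisted multiplication $*$ and the given formulas $S(K_\epsilon)=K_\epsilon^{-1}$, $S(F_\epsilon)=-F_\epsilon K_\epsilon$, $S(E_\epsilon)=-\pi^\epsilon K_\epsilon^{-1}E_\epsilon$; for instance $m\circ(S\otimes\id)\Delta_{\epsilon,\kappa}(E_{\epsilon+\kappa}) = S(E_\epsilon)e_\kappa * \text{(stuff)} + \pi^\epsilon S(K_\epsilon)*E_\kappa$ should collapse to $0 = \varepsilon(E_{\epsilon+\kappa})$ after using $S(K_\epsilon)E_\kappa$ cancellation, and likewise for $F$ and $K$.

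The main obstacle—really the only delicate point—is bookkeeping the $\pi$-signs coming from the twisted multiplication $*$ when composing $S\otimes\id$ (or $\id\otimes S$) with $\Delta$ on the odd generators $E_\epsilon, F_\epsilon$: since $S$ is a superalgebra anti-homomorphism and $E,F$ are odd, one must be careful that $S$ reverses order and that the $\pi^{p(b)p(c)}$ factor in $*$ is inserted correctly when the middle tensor factors are odd. A secondary subtlety is confirming that $S$ as defined is indeed an anti-homomorphism of $\cU$ (one can either invoke this from the construction of $U_0$ as a Hopf superalgebra in \cite{Zou} and extend to $U_1$ by the same relation-checking, or verify it directly on relation (3) of Remark~\ref{rem:epsrels}, where the $EF - \pi FE$ term and the $(\pi^\epsilon K_\epsilon - K_\epsilon^{-1})$ term each pick up a factor $\pi^\epsilon$ under $S$). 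Once those sign computations are organized, everything else is a routine check on generators, exactly parallel to the $U_q(\fsl(2))$ case in \cite{Lu} with $\pi$ inserted.
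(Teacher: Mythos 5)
The overall plan matches the paper's: coassociativity from Lemma~\ref{lem:coprod2.2}, the counit and antipode axioms checked on generators, the observation that $S$ must be verified to be an anti-homomorphism of $\cU$, and the key structural fact that $m$ kills $U_0\otimes U_1 \oplus U_1\otimes U_0$ (so the antipode axiom is trivially satisfied on the generators of $U_1$). That much is sound.

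However, your treatment of the counit axiom contains a genuine error. You claim that "only the $\Delta_{0,0}$ piece survives the contraction" and that "on $U_1$ both composites vanish, consistent with $\varepsilon$ vanishing there." This is incorrect, and if it were true it would \emph{disprove} the counit axiom rather than establish it: the axiom requires $(\varepsilon\otimes\id)\circ\Delta = \id$, so on $U_1$ the composite must equal the identity on $U_1$, not zero. The reason your reasoning goes wrong is that $\varepsilon\otimes\id$ kills only those terms whose \emph{first} tensor factor lies in $U_1$; so the surviving pieces are $\Delta_{0,0}$ and $\Delta_{0,1}$, not just $\Delta_{0,0}$. Concretely, $\Delta(E_1) = E_0\otimes e_1 + K_0\otimes E_1 + E_1\otimes e_0 + \pi K_1\otimes E_0$, and applying $\varepsilon\otimes\id$ yields $\varepsilon(K_0)\otimes E_1 = 1\otimes E_1$, which is identified with $E_1$, not $0$. (In fact the paper's own worked example in this proof is precisely $(\varepsilon\otimes 1)\circ\Delta(E_1) = 1\otimes E_1$, the exact case your argument mishandles.) The fix is short—replace the claim of vanishing on $U_1$ with the computation showing that the $\Delta_{0,1}$ (resp.\ $\Delta_{1,0}$) piece supplies the identity via $(\varepsilon\otimes\id)$ (resp.\ $(\id\otimes\varepsilon)$)—but as written the argument would not close.
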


\begin{proof}
The statements on properties of $\Delta$ are simply a reformulation
of Lemmas~\ref{lem:coprods} and \ref{lem:coprod2.2}. It is trivial
to verify that the counit is indeed an algebra homomorphism and
satisfies the defining commutative diagram for a counit; for
example, to check that $(\varepsilon\otimes 1) \circ
\Delta(E_1)=1\otimes E_1$, we compute
\[
(\varepsilon\otimes 1) \circ \Delta(E_1) = \varepsilon(E_0)\otimes
e_1 + \varepsilon(E_1)\otimes e_0 + \varepsilon(K_0)\otimes E_1 +
\pi \varepsilon(K_1)\otimes e_0 = 1\otimes E_1.
\]

To show that the antipode is an anti-automorphism, it is trivial to
check all except for the commutator relation between $E_\ep$ and
$F_\ep$, which we compute directly:
\begin{align*}
S(E_\epsilon F_\epsilon-\pi F_\epsilon E_\epsilon)
&=\pi S(F_\epsilon)S(E_\epsilon)-\pi^2 S(E_\epsilon)S(F_\epsilon)
 \\
&=\pi \pi^\epsilon F_\epsilon E_\epsilon - \pi^\epsilon K_\epsilon
E_\epsilon F_\epsilon K_\epsilon^{-1}
 =-\pi^\epsilon (E_\epsilon F_\epsilon -\pi F_\epsilon E_\epsilon)
  \\
& =\frac{\pi^\epsilon K^{-1} - K}{\pi q -
q^{-1}}=S\parens{\frac{\pi^\epsilon K_\epsilon - K_\epsilon^{-1}
}{\pi q-q^{-1}}}.
\end{align*}

Then we need to check that $m\circ (S\otimes 1) \circ \Delta=m\circ
(1\otimes S) \circ \Delta = \iota\circ \varepsilon$ on the
generators, where  $\iota:\Q(q)\rightarrow \cU$ is the
$\Q(q)$-linear embedding sending $1\mapsto 1$. This is trivial to
check on $E_1$, $F_1$ and $K_1$ since $U_0\otimes U_1\oplus
U_1\otimes U_0$ is in the kernel of $m$. Checking this equality on
$E_0$, $F_0$, and $K_0$ is essentially the same as the
$U_q(\fsl(2))$-argument; for example,
\[
m\circ (S\otimes 1) \circ \Delta(K_0)=S(K_0)K_0+ S(K_1)K_1=e_0+e_1=1=\varepsilon(K_0).
\]
The proposition is proved.
\end{proof}

The following is a super analogue of \cite[3.1.5]{Lu}.
\begin{lemma}\label{lem:divpowcop}
The following formulas hold for $\Delta:U\rightarrow U\otimes U$ and
$\ep =0,1$:
\begin{align*}
\Delta(E_0^{(p)})&=\sum_{a+b=p} q^{ab} E_0^{(a)}K_0^{b} \otimes
E_0^{(b)} + \sum_{a+b=p} \pi^{b} q^{ab} E_1^{(a)}K_1^{b} \otimes
E_{1}^{(b)},
 \\
\Delta(E_1^{(p)})&=\sum_{a+b=p} q^{ab} E_0^{(a)}K_0^{b} \otimes
E_1^{(b)} + \sum_{a+b=p} \pi^{b} q^{ab} E_1^{(a)}K_1^{b}\otimes
E_0^{(b)},
 \\
\Delta(F_0^{(p)})&=\sum_{a+b=p} (\pi q)^{-ab} F_0^{(a)}\otimes
K_0^{-a}F_0^{(b)} + \sum_{a+b=p} (\pi q)^{-ab} F_1^{(a)}\otimes
K_1^{-a}F_{1}^{(b)},
 \\
\Delta(F_1^{(p)})&=\sum_{a+b=p} (\pi q)^{-ab} F_0^{(a)}\otimes
K_1^{-a}F_1^{(b)} + \sum_{a+b=p} (\pi q)^{-ab} F_1^{(a)}\otimes
K_0^{-a}F_0^{(b)}.
\end{align*}
\end{lemma}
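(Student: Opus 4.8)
The plan is to prove the four coproduct formulas by induction on $p$, using the algebra homomorphism property of $\Delta$ together with the explicit images of $E_\ep$ and $F_\ep$ from Lemma~\ref{lem:coprods}. The base case $p=0$ is trivial since $E_\ep^{(0)}=F_\ep^{(0)}=1$ and $\Delta(1)=1\otimes 1$, and the case $p=1$ is exactly the statement of Lemma~\ref{lem:coprods} once one recalls $\Delta=(\Delta_{0,0}+\Delta_{1,1})\oplus(\Delta_{0,1}+\Delta_{1,0})$. Since the four formulas are interlocked (applying $\Delta$ to $E_0$ or $E_1$ pulls in terms indexed by both $U_0$ and $U_1$ factors), I would run the induction on all four simultaneously: assume the formulas hold for $p$ and deduce them for $p+1$.

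The inductive step amounts to multiplying, say, $\Delta(E_0^{(p)})$ by $\Delta(E_0)$ or $\Delta(E_1)$ on the appropriate side (using $E_\ep^{(p+1)}=[p+1]^{-1}E_\ep E_\ep^{(p)}$, and likewise for the minus version), expanding via the twisted multiplication $*$ on $U\otimes U$, and collecting terms. The key technical inputs are: first, the commutation relation $K_\ep E_\ep^{(a)} = q^{2a} E_\ep^{(a)} K_\ep$ (so that one can move $K$'s past divided powers, picking up powers of $q$); second, the $q$-divided-power identity $E_\ep^{(a)}E_\ep = [a+1]E_\ep^{(a+1)}$ together with the analogous $E_\ep E_\ep^{(a)} = [a+1]E_\ep^{(a+1)}$ (valid since $E_\ep$ generates a commutative — indeed polynomial — subalgebra, so no $\pi$-signs intervene within a single $U_\ep^\pm$); and third, careful bookkeeping of the parity signs $\pi^{p(b)p(c)}$ from the twisted product, which is where the $\pi^b$ factors in the $E$-formulas and the $(\pi q)^{-ab}$ factors in the $F$-formulas originate. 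After collecting, one reindexes the sum (shifting $a\mapsto a$, $b\mapsto b$ with $a+b=p+1$) and checks that the resulting coefficient of each $E_\ep^{(a)}K_\ep^b\otimes E_\ep^{(b)}$ is $[p+1]^{-1}$ times $\big(q^{(a-1)b}[a] + q^{ab-?}[b]\big)$ — a $q$-Pascal type identity — which collapses to $q^{ab}$ (resp.\ to the stated power) by the standard recursion $q^{b}[a] + q^{-a}[b] = [a+b]$ or its mirror. The $F$-case is entirely parallel, applying $\omega$ or simply repeating the computation with $F_\ep$ in place of $E_\ep$; alternatively one deduces the $F$-formulas from the $E$-formulas by applying $\omega\otimes\omega$ (which intertwines $\Delta$ with its opposite up to the flip, using Proposition~\ref{prop:autom}(2)) — I would mention this as a shortcut but probably just do the direct induction for clarity.

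The main obstacle I anticipate is not any single deep step but rather the sign bookkeeping: there are three independent sources of $\pi$'s — the factor $\pi^\ep$ sitting inside $\Delta_{\ep,\kappa}(E_{\ep+\kappa})$, the twisted-multiplication sign $\pi^{p(b)p(c)}$, and the $\pi$'s hidden in the $(q,\pi)$-integers $[a]=\big((\pi q)^a-q^{-a}\big)/(\pi q-q^{-1})$ and in the relation $[-n]=-\pi^n[n]$ — and these must be tracked through the reindexing to land on the clean exponents $\pi^b$ (in the second summand of $\Delta(E_\ep^{(p)})$) and $(\pi q)^{-ab}$ (uniformly in $\Delta(F_\ep^{(p)})$). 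I would organize the computation so that all $K$-conjugations are performed first (producing the $q^{ab}$-type powers), then all parity signs are collected, then the $q$-Pascal identity is applied last; this keeps the three sources of $\pi$ from getting entangled. Everything else — coassociativity-type checks, linear independence — is already available from the preceding results, so the proof is self-contained modulo this careful but routine induction.
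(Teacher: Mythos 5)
Your proposal is correct and follows essentially the same route as the paper: an induction on $p$ done componentwise over the $\Delta_{\ep,\kappa}$, multiplying the inductive hypothesis by $\Delta(E_\ep)$ (resp.\ $\Delta(F_\ep)$), pushing $K$'s past divided powers, reindexing, and closing with the super $q$-Pascal identity $(\pi q)^b[a]+q^{-a}[b]=[a+b]$. The only slip is that you wrote the Pascal identity without the $\pi$ in $(\pi q)^b$, but your surrounding discussion makes clear you intend the $(q,\pi)$-version, and the paper likewise proves only one component ($\Delta_{1,1}(E_0^{(p)})$) in detail and declares the rest analogous, just as you propose.
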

\begin{proof}
The proof of all the four identities are similar, and we will only
give the detail on the first one. To prove the first identity, it is
equivalent to prove that
\begin{align*}
\Delta_{0,0}(E_0^{(p)}) &=\sum_{a+b=p} q^{ab} E_0^{(a)}K_0^{b}
\otimes E_0^{(b)},
  \\
\Delta_{1,1}(E_0^{(p)}) &=\sum_{a+b=p} \pi^{b} q^{ab}
E_1^{(a)}K_1^{b} \otimes E_{1}^{(b)}.
\end{align*}
Let us verify only the formula for $\Delta_{1,1}(E_0^{(p)})$ by
induction on $p$, as the other formula can be similarly verified.
The case for $p=1$ follows directly from Lemma~\ref{lem:coprods}.
Assume now the formula for $\Delta_{1,1}(E_0^{(p)})$ is valid for
some $p$. Then,
\begin{align*}
&\Delta_{1,1}(E_0^{(p)}E_0)
 \\
&= \big( \sum_{a+b=p} \pi^{b} q^{ab} E_1^{(a)}K_1^{b} \otimes
E_{1}^{(b)} \big) \cdot (E_1\otimes e_1 + \pi K_1\otimes E_1)
  \\
 &=\sum_{a+b=p} q^{(a+2)b} [a+1] E_1^{(a+1)}K_1^{b} \otimes
E_{1}^{(b)} + \sum_{a+b=p} \pi^{b+1} q^{ab}[b+1] E_1^{(a)}K_1^{b+1}
\otimes E_{1}^{(b+1)}
  \\
 &\stackrel{(\star)}{=} [p+1]E_1^{(p+1)}\otimes e_1 +\pi^{p+1} [p+1] K_1^{p+1}
\otimes E_{1}^{(p+1)}
  \\
&\qquad + \sum_{a+b=p, a\ge 1, b\ge 1} \big(q^{(a+1)(b+1)} [a]
+\pi^{b+1} q^{ab}[b+1] \big) E_1^{(a)}K_1^{b+1} \otimes
E_{1}^{(b+1)}
  \\
&= [p+1]\sum_{a+b=p+1} \pi^{b} q^{ab} E_1^{(a)}K_1^{b} \otimes
E_{1}^{(b)}.
\end{align*}
The identity ($\star$) above is obtained by shifting $a$ to $a-1$
and $b$ to $b+1$ in the first $\sum$ on the left-hand side. This
completes the proof.
\end{proof}

\subsection{Tensor of Modules}

Let $M$ and $N$ be $U$-modules. Then $M\otimes N$ is a $U\otimes
U$-module via the action
\[(u\otimes v)(m\otimes n)=\pi^{p(v)p(m)} (um)\otimes (vn)\]
for $\Z_2$-homogeneous $v\in U$ and $m\in M$. Composition with the
coproduct $\Delta$ defines a $U$-module structure on $M\otimes N$.

\begin{example}
Consider the tensor module $M=L(1,+)\otimes L(2,+)$, for which we
need only consider the action of $U_1$ under the coproduct
$\Delta_{1,0}$.
Let $w$ be a highest weight vector of $L(1,+)$ and $v$ be a highest
weight vector of $L(2,+)$. Then $M\cong L(3,+)\oplus L(1,+)$.
Indeed, the vector \[F_1v\otimes w-\pi q^{-1}[2]^{-1}v\otimes F_0w\]
is a singular vector generating a copy of $L(1,+)$
 since
\[\Delta_{1,0}(E_1) (F_1v\otimes w)
 =E_1F_1v\otimes w+\pi^{p(E_1)p(F_1v)}(\pi K_1)F_1v\otimes E_1w=v\otimes w,\]
\[\Delta_{1,0}(E_1) (v\otimes F_0w)
 =E_1v\otimes w+\pi^{p(E_1)p(v)}(\pi K_1)v\otimes E_0F_0w=\pi q [2] v\otimes w.\]
\end{example}
\section{Quasi-$R$-matrix of $U$}
\label{sec:CBRmatrix}

\subsection{Quasi-$R$ matrix}

We can define the quasi-$R$-matrix $\Theta$ in our setting (cf.
\cite[Chapter~ 4]{Lu} or \cite[Chapter 7]{Jan} for $U_q(\fsl(2))$).
Set
\begin{equation}  \label{eq:an}
a_n =(-1)^n[n]!(\pi q)^{-\binom{n}{2}}(\pi q-q^{-1})^n\in\cA, \quad \text{
for } n\ge 0.
\end{equation}
(Compare the definition of $a_n$ with \eqref{eq:bform2}.) Let
$\epsilon_1,\epsilon_2\in\set{0,1}$. We formally set
$$
\Theta_{\epsilon_1,\epsilon_2}=\sum_{n\geq 0}
\Theta^n_{\epsilon_1,\epsilon_2}, \qquad \text{ with }
\Theta^n_{\epsilon_1,\epsilon_2}= a_n F_{\epsilon_1}^{(n)}\otimes
E_{\epsilon_2}^{(n)},
$$
where $E_{\epsilon}^{(0)}=F_{\epsilon}^{(0)}=e_\epsilon$, the
idempotent corresponding to $U_\epsilon$. Then
$\Theta_{\epsilon_1,\epsilon_2}$ lies in some completion of
$U_{\epsilon_1}\otimes U_{\epsilon_2}$, and it can be regarded as a
well-defined linear operator on the tensor product of
finite-dimensional weight $\cU$-modules. Below we denote
$\overline{u_1\otimes u_2}=\bar{u_1} \otimes \bar{u_2}$ for
$u_1,u_2\in U$ and set
$\bar{\Delta}=\barmap\circ\Delta\circ\barmap$.

\begin{prop}\label{prop:thetacomm}
Let $\ep_1,\ep_2\in \{0,1\}$, and let $u\in U_{\epsilon_1+\epsilon_2}$. Then
\begin{enumerate}
\item
$\Delta_{\epsilon_1,\epsilon_2}(u)\Theta_{\epsilon_1,\epsilon_2}
=\Theta_{\epsilon_1,\epsilon_2}\bar{\Delta}_{\epsilon_1,\epsilon_2} (u)$;

\item
 $\Theta_{\epsilon_1,\epsilon_2}\bar{\Theta}_{\epsilon_1,\epsilon_2}
 =e_{\epsilon_1}\otimes e_{\epsilon_2}$.
\end{enumerate}
\end{prop}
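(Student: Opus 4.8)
The plan is to follow the standard strategy used for $U_q(\fsl(2))$ (as in \cite[Chapter 4]{Lu} or \cite[Chapter 7]{Jan}), adapting it to account for the parameter $\pi$ and the idempotents $e_{\epsilon_1}\otimes e_{\epsilon_2}$. For part (1), since $U_{\epsilon_1+\epsilon_2}$ is generated by $E, F, K^{\pm 1}$, it suffices to verify the identity when $u$ is one of these generators. The case $u = K$ is immediate: both $\Delta_{\epsilon_1,\epsilon_2}(K)$ and $\bar{\Delta}_{\epsilon_1,\epsilon_2}(K)$ equal $K_{\epsilon_1}\otimes K_{\epsilon_2}$ (the bar-involution sends $K_\epsilon$ to $\pm K_\epsilon^{-1}$, but conjugating $\Delta$ by bar restores the sign and the inversion), and one checks directly that $K_{\epsilon_1}\otimes K_{\epsilon_2}$ commutes with each $a_n F_{\epsilon_1}^{(n)}\otimes E_{\epsilon_2}^{(n)}$ up to the matching powers of $q$ coming from the weight grading. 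For $u = E$ and $u = F$, I would expand $\Delta_{\epsilon_1,\epsilon_2}(E)\Theta^n_{\epsilon_1,\epsilon_2}$ and $\Theta^n_{\epsilon_1,\epsilon_2}\bar{\Delta}_{\epsilon_1,\epsilon_2}(E)$ using the commutation relations from Lemma~\ref{divpowcom} (which relate $E_\epsilon$ with $F_\epsilon^{(n)}$ and produce the $[K_\epsilon; \cdot]$ terms), collect terms by $F^{(n)}$-degree, and match coefficients to obtain a recursion on the $a_n$; this recursion is exactly what \eqref{eq:an} satisfies, so the identity holds term-by-term. The case $u=F$ is handled either symmetrically or by applying $\omega\otimes\omega$ together with \eqref{eq:omega}.

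For part (2), the relation $\Theta_{\epsilon_1,\epsilon_2}\bar{\Theta}_{\epsilon_1,\epsilon_2}=e_{\epsilon_1}\otimes e_{\epsilon_2}$ reduces, after writing out the product as a double sum and grouping by total $F$-degree, to the scalar identity
\[
\sum_{n=0}^N (-1)^n \pi^{?} q^{?}\, [n]!\,[N-n]!\cdots = 0 \quad\text{for } N\ge 1,
\]
i.e. a $(q,\pi)$-analogue of the vanishing alternating sum $\sum_{n}(-1)^n \bbinom{N}{n}_{q} = 0$. The key input is that $\bar{a}_n$ can be computed from \eqref{eq:an} using $\bar q = \pi q^{-1}$ and the bar-invariance of $[n]!$, that $\overline{F^{(n)}}=F^{(n)}$ and $\overline{E^{(n)}}=E^{(n)}$, and that the product $F_{\epsilon_1}^{(m)}F_{\epsilon_1}^{(n)} = \bbinom{m+n}{m} F_{\epsilon_1}^{(m+n)}$ (with the super binomial of \eqref{eq:qbinom}) introduces the binomial coefficient, so that the coefficient of $F_{\epsilon_1}^{(N)}\otimes E_{\epsilon_2}^{(N)}$ becomes $\sum_{n+m=N} \bbinom{N}{n}\bbinom{N}{n} a_n \bar a_m$ up to signs and powers of $\pi, q$. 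I would then invoke a $(q,\pi)$-binomial identity — provable by the relation \eqref{eq:n-n} that expresses $\bbinom{n}{a}$ in terms of $\bbinom{a-n-1}{a}$, or by a short induction — to see this sum vanishes for $N\ge 1$ and equals $1$ for $N=0$, which accounts precisely for the idempotent on the right-hand side.

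The main obstacle I anticipate is bookkeeping the powers of $\pi$: unlike the $\fsl(2)$ case, every swap of odd elements and every instance of the twisted multiplication $*$ on $U\otimes U$ contributes a sign $\pi$, and these must be tracked carefully through the divided-power commutations in Lemma~\ref{divpowcom}, through $\bar a_n$, and through the reordering $F^{(m)}F^{(n)}\to F^{(m+n)}$. The cleanest way to control this is to work one pair $(\epsilon_1,\epsilon_2)$ at a time, exploit that $F_{\epsilon_1}$ and $E_{\epsilon_2}$ live in different tensor factors so that the only $\pi$'s that survive come from the coproduct formulas in Lemma~\ref{lem:coprods} and from $[K_\epsilon;\cdot]$ acting through \eqref{eq:abc}, and to double-check the final scalar identities against the known $\fsl(2)$ specialization obtained by setting $\pi = 1$ (and against $\dot U$ by $\pi=-1$), which must reproduce Lusztig's formulas. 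Once the powers of $\pi$ are pinned down, both parts follow from the recursions/identities exactly as in the classical argument.
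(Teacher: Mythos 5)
Your proposal is correct and follows essentially the same strategy as the paper: for part (1) it suffices to check on generators, and the paper likewise reduces to three per-term identities in the $\Theta^n$ relating $E$, $F$, $K$ to $\Theta^n$ and $\Theta^{n-1}$, using Lemma~\ref{divpowcom} and the explicit form \eqref{eq:an} of $a_n$ (which is the same content as your ``recursion'' matching), while for part (2) the paper also writes $\Theta\bar\Theta=\sum_n b_n F^{(n)}\otimes E^{(n)}$, absorbs one super-binomial from $\bbinom{n}{t}^2$ into $[n]!$, and reduces the vanishing of $b_n$ for $n\ge 1$ to a super $q$-binomial alternating-sum identity, exactly as you describe.
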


\begin{proof}
To avoid cumbersome notation, we will drop the subscripts on
$E,F,K$; the hidden subscripts can be recovered from the positions
in the tensors.

(1) If
$\Delta_{\epsilon_1,\epsilon_2}(u_1)\Theta_{\epsilon_1,\epsilon_2}
=\Theta_{\epsilon_1,\epsilon_2}\bar{\Delta}_{\epsilon_1,\epsilon_2}
(u_1)$ and
$\Delta_{\epsilon_1,\epsilon_2}(u_2)\Theta_{\epsilon_1,\epsilon_2}
=\Theta_{\epsilon_1,\epsilon_2}\bar{\Delta}_{\epsilon_1,\epsilon_2}
(u_2)$, then clearly
$\Delta_{\epsilon_1,\epsilon_2}(u_1u_2)\Theta_{\epsilon_1,\epsilon_2}
=\Theta_{\epsilon_1,\epsilon_2}\bar{\Delta}_{\epsilon_1,\epsilon_2}
(u_1u_2)$. Hence it suffices to check (1) on the generators $E, F,
K$, which is equivalent to proving the following identities:
\begin{enumerate}
\item[(i)]
$(E\otimes e)\Theta^{n}_{\epsilon_1,\epsilon_2}+(\pi^{\epsilon_1}
K\otimes E) \Theta^{n-1}_{\epsilon_1,\epsilon_2}
=\Theta^{n}_{\epsilon_1,\epsilon_2}(E\otimes e) +
\Theta^{n-1}_{\epsilon_1,\epsilon_2} (K^{-1}\otimes E)$;

\item[(ii)]
$(e\otimes F) \Theta^{n}_{\epsilon_1,\epsilon_2} +(F\otimes
K^{-1})\Theta^{n-1}_{\epsilon_1,\epsilon_2}
=\Theta^{n}_{\epsilon_1,\epsilon_2}(e\otimes F)
+\Theta^{n-1}_{\epsilon_1,\epsilon_2}(F\otimes \pi^{\epsilon_2} K)$;

\item[(iii)]
$(K\otimes K)\Theta^{n}_{\epsilon_1,\epsilon_2}
=\Theta^{n}_{\epsilon_1,\epsilon_2} (K\otimes K)$.
\end{enumerate}

For (i), we have
\begin{align*}
(E\otimes e)\Theta^{n}_{\epsilon_1,\epsilon_2}
&-\Theta^{n}_{\epsilon_1,\epsilon_2}(E\otimes e)
=a_n(E_{}F_{}^{(n)}-\pi^n F_{}^{(n)}E_{})\otimes E_{}^{(n)}
 \\
&=\pi^{1-n} F_{}^{(n-1)}a_n\parens{\frac{(\pi
q)^{1-n}\pi^{\epsilon_1} K_{} - q^{n-1}K_{}^{-1}}{\pi q -
q^{-1}}}\otimes E_{}^{(n)}
 \\
&=\frac{\pi^{1-n}
a_n}{a_{n-1}[n]}\Theta^{n-1}_{\epsilon_1,\epsilon_2}
\parens{\frac{(\pi q)^{1-n}\pi^{\epsilon_1} K_{}
- q^{n-1}K_{}^{-1}}{\pi q - q^{-1}}}\otimes E_{},
\end{align*}
and
\begin{align*}
(\pi^\ep K_{}\otimes E_{})
\Theta^{n-1}_{\epsilon_1,\epsilon_2}&-\Theta^{n-1}_{\epsilon_1,\epsilon_2}
(K_{}^{-1}\otimes E_{})
 \\
&=q^{1-n}(\pi q - q^{-1})
\Theta^{n-1}_{\epsilon_1,\epsilon_2}\parens{\frac{(\pi q)^{1-n}
\pi^\ep K_{} - q^{n-1}K_{}^{-1}}{\pi q - q^{-1}}}\otimes E_{}.
\end{align*}
Hence (i) follows by applying \eqref{eq:an}.

For (ii), we have
\begin{align*}
(e_{}\otimes
F_{})\Theta^{n}_{\epsilon_1,\epsilon_2}&-\Theta^{n}_{\epsilon_1,\epsilon_2}(e_{}
\otimes F_{}) =a_n F_{}^{(n)}\otimes (\pi^n F_{}E_{}^{(n)}-
E_{}^{(n)}F_{})
  \\
&=a_n F_{}^{(n)}\otimes \parens{\pi^{1-n} E_{}^{(n-1)} \frac{q^{1-n}
K_{}^{-1}-(\pi q)^{n-1}\pi^{\epsilon_2} K_{}}{\pi q - q^{-1}}}
 \\
&=\frac{ a_n}{a_{n-1}[n]}\Theta^{n-1}_{\epsilon_1,\epsilon_2}
F_{}\otimes \parens{\frac{q^{1-n} K_{}^{-1}-(\pi
q)^{n-1}\pi^{\epsilon_2} K_{}}{\pi q - q^{-1}}},
\end{align*}
and
\begin{align*}
(F_{}\otimes K_{}^{-1})\Theta^{n-1}_{\epsilon_1,\epsilon_2}
&-\Theta^{n-1}_{\epsilon_1,\epsilon_2}(F_{}\otimes \pi^{\epsilon_2}
K_{})
 \\
&=\pi^{1-n} q^{1-n}(\pi q - q^{-1})
\Theta^{n-1}_{\epsilon_1,\epsilon_2}F_{}\otimes \parens{
\frac{q^{1-n} K_{}^{-1}-(\pi q)^{n-1} \pi^{\epsilon_2} K_{} }{ \pi q
- q^{-1} } }.
\end{align*}
Hence (ii) follows. The identity (iii) is clear.

(2) Write the formal product
\[
\Theta_{\epsilon_1,\epsilon_2}\bar\Theta_{\epsilon_1,\epsilon_2}=\sum_{n\geq
0} b_n F_{}^{(n)}\otimes E_{}^{(n)}.
\]
Comparing coefficients, we compute that $b_0=1$, and for $n \ge 1$,
\[
b_n=[n]!(\pi q - q^{-1})^n \sum_{t=0}^n
(-1)^t\pi^{n(n-t)}(q^{-1})^{-\binom{t}{2}}(\pi
q)^{-\binom{n-t}{2}}\bbinom{n}{t}=0,
\]
where the last equality follows from a version of $q$-binomial
identity for super binomial coefficients. Hence
$\Theta_{\epsilon_1,\epsilon_2}\bar \Theta_{\epsilon_1,\epsilon_2}
=e_{\epsilon_1}\otimes e_{\epsilon_2}$.
\end{proof}

Set $\Theta=\Theta_{0,0}+\Theta_{0,1}+\Theta_{1,0}+\Theta_{1,1}$.
\begin{cor}
We have $\Delta(u)\Theta=\Theta\bar{\Delta}(u)$, for $u\in U$, and
$\Theta\bar{\Theta}=1\otimes 1$.
\end{cor}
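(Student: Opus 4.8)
The plan is to deduce the corollary directly from Proposition~\ref{prop:thetacomm} by exploiting the idempotent decomposition $1=e_0+e_1$ of $\cU$ and the block structure of $\cU\otimes\cU$. First I would record the basic bookkeeping: since $\Delta_{\ep_1,\ep_2}$ maps $U_{\ep_1+\ep_2}$ into $U_{\ep_1}\otimes U_{\ep_2}$, the full coproduct on a homogeneous element $u\in U_\delta$ (where $\delta\in\{0,1\}$, and $U_\delta = e_\delta\cU$) is
\[
\Delta(u)=\sum_{\ep_1+\ep_2=\delta}\Delta_{\ep_1,\ep_2}(u),
\]
and likewise $\bar\Delta(u)=\sum_{\ep_1+\ep_2=\delta}\bar\Delta_{\ep_1,\ep_2}(u)$, where the sum is over $\ep_1,\ep_2\in\{0,1\}$ with $\ep_1+\ep_2\equiv\delta\pmod 2$ (so for $\delta=0$ the pairs are $(0,0),(1,1)$ and for $\delta=1$ they are $(0,1),(1,0)$). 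Also $\Theta=\sum_{\ep_1,\ep_2}\Theta_{\ep_1,\ep_2}$ lives in (a completion of) $\bigoplus_{\ep_1,\ep_2}U_{\ep_1}\otimes U_{\ep_2}$, and the term $\Theta_{\ep_1,\ep_2}$ is supported in the $(\ep_1,\ep_2)$-block $e_{\ep_1}\cU\otimes e_{\ep_2}\cU$.

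Next I would verify $\Delta(u)\Theta=\Theta\bar\Delta(u)$. By additivity it suffices to take $u\in U_\delta$ homogeneous for some $\delta\in\{0,1\}$. The key point is that multiplication in $\cU\otimes\cU$ respects the block grading: $(e_{\ep_1}\cU\otimes e_{\ep_2}\cU)\cdot(e_{\ep_1'}\cU\otimes e_{\ep_2'}\cU)=0$ unless $\ep_1=\ep_1'$ and $\ep_2=\ep_2'$, because $e_ae_b=\delta_{a,b}e_a$. Hence the only surviving products are
\[
\Delta_{\ep_1,\ep_2}(u)\,\Theta_{\ep_1,\ep_2}
\quad\text{and}\quad
\Theta_{\ep_1,\ep_2}\,\bar\Delta_{\ep_1,\ep_2}(u),
\]
for the pairs with $\ep_1+\ep_2\equiv\delta$, and Proposition~\ref{prop:thetacomm}(1) equates these blockwise. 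Summing over the relevant pairs gives $\Delta(u)\Theta=\Theta\bar\Delta(u)$. For the second identity, I write $\Theta\bar\Theta=\sum_{\ep_1,\ep_2,\ep_1',\ep_2'}\Theta_{\ep_1,\ep_2}\bar\Theta_{\ep_1',\ep_2'}$; again all cross terms with $(\ep_1,\ep_2)\ne(\ep_1',\ep_2')$ vanish by the idempotent relations (note $\bar\Theta_{\ep_1',\ep_2'}$ is still supported in the $(\ep_1',\ep_2')$-block since the bar-involution preserves each $U_\ep$), so
\[
\Theta\bar\Theta=\sum_{\ep_1,\ep_2\in\{0,1\}}\Theta_{\ep_1,\ep_2}\bar\Theta_{\ep_1,\ep_2}
=\sum_{\ep_1,\ep_2\in\{0,1\}}e_{\ep_1}\otimes e_{\ep_2}
=(e_0+e_1)\otimes(e_0+e_1)=1\otimes1,
\]
using Proposition~\ref{prop:thetacomm}(2).

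There is no real obstacle here; the corollary is a purely formal consequence of the $\ep$-wise statements together with orthogonality of the idempotents. The only point requiring a word of care is the meaning of the (infinite) sums: one should note, as already remarked before Proposition~\ref{prop:thetacomm}, that $\Theta$ and the products above are interpreted either in an appropriate completion of $\cU\otimes\cU$ or, equivalently, as operators on tensor products of finite-dimensional weight $\cU$-modules, where on any fixed such module only finitely many $\Theta^n_{\ep_1,\ep_2}$ act nontrivially; with that convention all the manipulations above are term-by-term and legitimate.
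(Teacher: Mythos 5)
Your proof is correct and matches the paper's (implicit) reasoning: the corollary is obtained from Proposition~\ref{prop:thetacomm} exactly by summing over the blocks $U_{\ep_1}\otimes U_{\ep_2}$ and using the orthogonality $e_0e_1=0$ to kill cross terms, as you do. The paper states the corollary without proof precisely because it is this routine bookkeeping, and your remark about interpreting the infinite sums as operators on tensor products of finite-dimensional weight modules is the right way to handle convergence.
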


Define an antilinear operator
$$
\Psi=\Theta\circ (\bar{\phantom{c}}\times\bar{\phantom{c}})
$$
on $M_1\otimes M_2$ as in \cite[24.3.2]{Lu}, where $M_1$ and $M_2$
are finite-dimensional weight $\cU$-modules. The following can be
proved as in {\em loc. cit.}.

\begin{prop}
The operator $\Psi$ acts as an antilinear involution on the
$\Q(q)$-vector space $M_1 \otimes M_2$, where $M_1$ and $M_2$ are
finite-dimensional $\cU$-modules.
\end{prop}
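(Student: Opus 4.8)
The plan is to follow the classical argument of Lusztig \cite[24.1.2--24.3.2]{Lu} adapted to the covering/super setting, exploiting the two properties of $\Theta$ just established in the corollary. First I would show that $\Psi = \Theta \circ (\barmap \times \barmap)$ is well defined on $M_1\otimes M_2$ when $M_1,M_2$ are finite-dimensional weight $\cU$-modules: although $\Theta = \sum_n \Theta^n$ is only an element of a completion, each $\Theta^n_{\ep_1,\ep_2}$ acts as $a_n F^{(n)}\otimes E^{(n)}$, and since $E^{(n)}$ annihilates any fixed vector of a finite-dimensional weight module for $n$ large (weights are bounded above), only finitely many summands act nontrivially on any given element. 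Hence $\Psi$ is a well-defined $\Q(q)$-linear map that is antilinear because $\barmap$ is antilinear on $\Q(q)$ and $\Theta$ has coefficients $a_n\in\cA$ which are themselves bar-invariant (so $\Theta = \bar\Theta$ after the flip? — no, one must be careful here; the point is simply that $\Psi$ intertwines scalars by $\barmap$).

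Second, I would prove $\Psi^2 = \id$. Since $\barmap\times\barmap$ is an antilinear involution on $M_1\otimes M_2$ (each $M_i$ being a bar-involutive module, which holds because the $\cU$-modules in question carry a compatible bar-involution induced from $\psi$, exactly as in $U_q(\fsl(2))$), we compute
\[
\Psi^2 = \Theta\circ(\barmap\times\barmap)\circ\Theta\circ(\barmap\times\barmap)
 = \Theta\cdot\overline{\Theta}\circ(\barmap\times\barmap)^2 = \Theta\overline{\Theta} = 1\otimes 1,
\]
using the corollary's identity $\Theta\overline\Theta = 1\otimes 1$; here $\overline\Theta$ denotes $(\barmap\times\barmap)\circ\Theta\circ(\barmap\times\barmap)$ as a linear operator, and the convergence issue is handled exactly as for $\Psi$ itself since on a fixed vector all but finitely many terms vanish. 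One must check that $(\barmap\times\barmap)$ really does square to the identity on $M_1\otimes M_2$ — this is where the super sign in the twisted tensor action $(u\otimes v)(m\otimes n) = \pi^{p(v)p(m)}(um)\otimes(vn)$ needs to be reconciled with how $\barmap$ acts on homogeneous vectors; this sign bookkeeping is the main place the covering setting differs from \cite{Lu}.

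Third, I would record that $\Psi$ intertwines the $\cU$-action suitably, i.e.\ $\Psi(\Delta(u)\cdot x) = \Delta(\barmap(u))\cdot\Psi(x)$ for $u\in\cU$ and $x\in M_1\otimes M_2$, which is an immediate consequence of $\Delta(u)\Theta = \Theta\bar\Delta(u)$ together with the definition of $\bar\Delta = \barmap\circ\Delta\circ\barmap$ and the fact that the bar-involution on each $M_i$ satisfies $\overline{u\cdot m} = \barmap(u)\cdot\bar m$. Although the statement of the proposition only asserts that $\Psi$ is an antilinear involution, including this intertwining remark costs nothing and is what makes $\Psi$ the correct ``bar-involution'' on the tensor product for the subsequent canonical basis construction.

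The main obstacle I anticipate is not any deep structural fact — the two defining identities of $\Theta$ do all the real work — but rather the careful verification that the twisted-tensor super signs $\pi^{p(\cdot)p(\cdot)}$ appearing in the module action, in the multiplication $*$ on $U\otimes U$, and in the antilinear flip $\barmap\times\barmap$ all cancel consistently so that $\Psi^2 = \id$ holds on the nose rather than up to a sign. Since $\pi = -1$ throughout this section, a misplaced parity exponent would flip a sign; the safe route is to check $\Psi^2$ on a basis of the form $F_{\ep_1}^{(a)}m_1\otimes F_{\ep_2}^{(b)}m_2$ with $m_i$ highest-weight, reducing everything to the scalar identity $\sum_t (-1)^t\pi^{n(n-t)}q^{\binom{t}{2}}(\pi q)^{-\binom{n-t}{2}}\bbinom{n}{t} = \delta_{n,0}$ already used in the proof of Proposition~\ref{prop:thetacomm}(2).
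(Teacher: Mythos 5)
Your proof follows the same route the paper invokes — Lusztig's argument from \cite[24.3.2]{Lu} adapted to the super setting, with $\Psi^2 = \id$ reduced to the identity $\Theta\bar{\Theta} = 1\otimes 1$ from the preceding proposition, together with the observation that the bar-involutions on the $M_i$ are even antilinear involutions intertwining the $\cU$-action via $\psi$, so that $(\barmap\times\barmap)\circ\Theta = \bar{\Theta}\circ(\barmap\times\barmap)$ as operators. Your parenthetical claim that the $a_n$ are bar-invariant is false (e.g.\ $\bar{a}_1=-a_1$ since $\overline{\pi q - q^{-1}}=-(\pi q - q^{-1})$), but you correctly discard it, and the argument rests only on the two properties of $\Theta$ already established, exactly as the paper intends.
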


\subsection{Canonical basis for $^{\omega}L(s)\otimes L(t)$}
 \label{subsec:tensorCB}

Suppose $M$ is a $\cU$-module. We define $^\omega M$ to be the same
vector space as $M$ but with the $\cU$-module action given by
$u\cdot m = \omega(u)m$. In particular, a highest weight module
becomes a lowest weight module under this transformation. Given
$n\in\Z$, we define
$$p(n) \in \{0,1\}  \text{ such that } \parity n\equiv n \text{ (mod 2)}.
$$
Consider the $\cU$-module
\[
L(s,t)=\vphantom{|}^{\omega}L(s)\otimes L(t), \qquad \text{ for }
s,t\in \N.
\]
This module has a basis
\[
E_{\parity s}^{(a)}\eta\otimes F_{\parity{t}}^{(b)}\nu, \qquad 0\leq
a \leq s,\, 0\leq b \leq t,
\]
where  $\eta,\nu$ are the lowest weight and highest weight vectors
respectively. This basis also generates a $\cA$-submodule
$_\cA\cL(s,t)$ which is also an $_\cA \cU$-module. Note that
$\Theta$ and $\Psi$ are well defined on $L(s,t)$ and $_\cA\cL(s,t)$.

Now we have $\Psi(E_{\parity s}^{(a)}\eta\otimes F_{\parity
t}^{(b)}\nu)=E_{\parity s}^{(a)}\eta\otimes F_{\parity t}^{(b)}\nu +
(*)$, where  ($*$) is an $\cA$-linear combination of $E_{\parity
s}^{(i)}\eta\otimes F_{\parity t}^{(j)}\nu$, with $(i,j)\prec
(a,b)$. Here the partial order $\preceq$ on $\N^2$ is defined by
declaring that $(i,j)\preceq (m,n)$ if and only if $m-n=i-j$ and
$m\leq i$ (hence also $n\leq j$). Then by a variant of \cite[Lemma
24.2.1]{Lu} adapted to our bar map \eqref{eq:bar}, we have the
following.

\begin{prop}  \label{prop:CBtensor}
Retain the notations above. There exists a unique $\Psi$-invariant
element $(E^{(a)}\diamondsuit F^{(b)})_{s,t} \in {}_\cA\cL(s,t)$,
for $0\leq a \leq s,\, 0\leq b \leq t,$ such that
$$
(E^{(a)}\diamondsuit F^{(b)})_{s,t}=\sum_{m,n}c_{a,b;m,n}^{s,t}
E_{\parity s}^{(m)}\eta \otimes F_{\parity t}^{(n)} \nu,
$$
where $c_{a,b;a,b}^{s,t}=1$, $c_{a,b;m,n}^{s,t}\in
q^{-1}\Z[q^{-1}]$, for all $(m,n)\prec (a,b)$.
\end{prop}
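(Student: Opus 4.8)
The plan is to mimic the standard argument of \cite[Lemma~24.2.1]{Lu} (i.e.\ the general ``bar-invariant lift'' lemma for a module equipped with an antilinear involution whose matrix is unitriangular with respect to a partial order), adapting only the arithmetic to the nonstandard bar-involution $\bar q = \pi q^{-1}$ of \eqref{eq:bar}. First I would record the input data: by the paragraph preceding the statement, $\Psi$ is an antilinear involution on the free $\cA$-module $_\cA\cL(s,t)$ (in fact on $L(s,t)$), and with respect to the basis $\{E_{\parity s}^{(a)}\eta \otimes F_{\parity t}^{(b)}\nu\}$ its matrix is upper-unitriangular for the partial order $\preceq$: $\Psi(E_{\parity s}^{(a)}\eta\otimes F_{\parity t}^{(b)}\nu) = E_{\parity s}^{(a)}\eta\otimes F_{\parity t}^{(b)}\nu + \sum_{(i,j)\prec(a,b)} r_{a,b;i,j}\, E_{\parity s}^{(i)}\eta\otimes F_{\parity t}^{(j)}\nu$ with $r_{a,b;i,j}\in\cA$. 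The key structural point is that $\preceq$ is only nontrivial within a fixed weight space (the condition $m-n=i-j$), and each weight space is finite-dimensional, so on each such space $\preceq$ is a genuine partial order with finitely many elements below any given one; this is exactly the combinatorial setup in which Lusztig's lemma applies verbatim.

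The main body of the argument is then the usual recursive/inductive construction. I would fix the weight space (fix $a-b$) and induct on $a$ (equivalently on the height in the poset). For the minimal element one takes the basis vector itself, which is already $\Psi$-invariant since $\Psi^2=\mathrm{id}$ forces the diagonal entry to be bar-invariant, hence equal to $1$. For the inductive step, suppose $(E^{(i)}\diamondsuit F^{(j)})_{s,t}$ has been constructed for all $(i,j)\prec(a,b)$; write the candidate $(E^{(a)}\diamondsuit F^{(b)})_{s,t}=E_{\parity s}^{(a)}\eta\otimes F_{\parity t}^{(b)}\nu + \sum_{(i,j)\prec(a,b)} p_{i,j}(q)\,(E^{(i)}\diamondsuit F^{(j)})_{s,t}$ with $p_{i,j}(q)\in q^{-1}\Z[q^{-1}]$ to be determined. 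Applying $\Psi$ and using $\Psi$-invariance of the lower terms, $\Psi$-invariance of the whole expression becomes the requirement $p_{i,j}(q) - \overline{p_{i,j}(q)} = (\text{known element of }\cA \text{ built from the } r\text{'s and lower }p\text{'s})$. The elementary fact that any element of $\cA = \Z[q,q^{-1}]$ can be written uniquely as $f(q) - \overline{f(q)}$ with $f\in q^{-1}\Z[q^{-1}]$ — which here uses $\overline{q^{-1}} = \pi q = -q$, so that $q^{n} - \overline{q^{n}} = q^n - (-q)^{-n}\cdot(\text{sign})$ still spans the right complement — gives existence and uniqueness of $p_{i,j}$, and the induction closes. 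Uniqueness of the whole element follows because the difference of two solutions would be a $\Psi$-invariant $\cA$-combination of the $(E^{(i)}\diamondsuit F^{(j)})_{s,t}$ with coefficients in $q^{-1}\Z[q^{-1}]$, and such a thing must vanish by the same triangularity argument.

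The one place needing genuine care — and the step I expect to be the main obstacle — is verifying that the sign twist $\bar q = \pi q^{-1} = -q^{-1}$ does not break the ``$f(q)-\bar f(q)$'' decomposition lemma, i.e.\ that $\{\,g - \bar g : g \in q^{-1}\Z[q^{-1}]\,\}$ still equals all of $\cA$ (equivalently, that $\cA$ decomposes as the bar-fixed part $\Z$ plus this complement). Concretely one checks that $q^{-n} - \overline{q^{-n}} = q^{-n} - (-1)^n q^{n}$ for $n\ge 1$, together with the constants, do span $\cA$ over $\Z$, which they do; the signs permute among the generators but do not collapse the span. Granting this, every invocation of Lusztig's lemma goes through with $\Z[q^{-1}]$ replaced by itself and $q^{-1}\Z[q^{-1}]$ playing its usual role, so no further structural changes to the proof of \cite[Lemma~24.2.1]{Lu} are needed; the Proposition follows. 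The positivity/integrality statements $c_{a,b;m,n}^{s,t}\in q^{-1}\Z[q^{-1}]$ are built into the construction, and $c_{a,b;a,b}^{s,t}=1$ is the normalization of the leading term, so nothing extra is required there.
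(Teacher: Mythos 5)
Your overall strategy --- adapting Lusztig's Lemma~24.2.1 to the modified bar-involution --- is exactly the paper's route (the paper states the lemma over $\Z[x,y]$ with a bar transposing $x$ and $y$, then specializes $x=\pi q$, $y=q^{-1}$). The gap is in the algebraic lemma you isolate as the ``main obstacle,'' which as you state it is false: it is not true that $\set{g - \bar g \,|\, g \in q^{-1}\Z[q^{-1}]}$ equals all of $\cA$, and the elements $q^{-n} - \overline{q^{-n}}$ for $n\ge 1$ together with the constants do \emph{not} span $\cA$ over $\Z$; for $n=1$ this element is $q^{-1}+q$, so $q$ alone is already outside the span. (Nor is the bar-fixed part of $\cA$ equal to $\Z$: for instance $q-q^{-1}$ is bar-invariant when $\pi=-1$.) The correct claim, and what Lusztig's recursion actually requires, is that $g \mapsto g - \bar g$ is a $\Z$-linear bijection from $q^{-1}\Z[q^{-1}]$ onto the set of \emph{antisymmetric} elements $\set{z \in \cA \,|\, \bar z = -z}$: antisymmetry forces the constant coefficient of $z$ to vanish and determines the positive-degree coefficients from the negative-degree ones, so one takes $g$ to be the strictly negative part of $z$; injectivity follows from $q^{-1}\Z[q^{-1}] \cap \Z[q] = 0$.

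This restriction matters: the right-hand side of the equation $p_{i,j} - \bar p_{i,j} = (\text{known})$ in your inductive step is not an arbitrary element of $\cA$, and for a solution $p_{i,j}\in q^{-1}\Z[q^{-1}]$ to exist it \emph{must} be antisymmetric. Establishing that antisymmetry --- which comes from $\Psi^2 = \mathrm{id}$, equivalently the condition $\sum_{h\le h''\le h'}\bar r_{h,h''}\,r_{h'',h'}=\delta_{h,h'}$ in Lusztig's lemma --- is a step your write-up omits. Once the algebraic lemma is corrected to concern only antisymmetric Laurent polynomials and the antisymmetry of the inductive remainder is supplied, the rest of your argument (unitriangularity of $\Psi$, local finiteness of $\preceq$ inside each weight space, uniqueness by triangularity) is fine and does reproduce the paper's proof.
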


This is an analogue of \cite[Theorem 24.3.3]{Lu}. The elements
$(E^{(a)}\diamondsuit F^{(b)})_{s,t}$, for $0\leq a \leq s,\, 0\leq
b \leq t,$ will be called the {\em canonical basis} of $L(s,t)$. The
coefficients $c_{a,b;m,n}^{s,t}$ will be determined precisely in
Corollary~\ref{cor:const}.

\section{Modified superalgebra and canonical basis}
\label{sec:CBdotU}

\subsection{Algebra $\dotU$}
\label{sec:defdotU}

Let $a,b\in \Z$, and consider the subspace  of $\cU$:
\[
{}_aJ_b=(K_{\parity
a}-q^{a}1)U_{\parity{a}}+U_{\parity{a}}(K_{\parity b}-q^{b}1).
\]
Then ${}_{a}J_b$ is a subspace of $U_{\parity{a}}$, and
${}_{a}J_b=U_{\parity{a}}$ if $\parity a\neq \parity b$. We set
\[
{}_a \cU {}_b=U_{\parity{a}}/{}_aJ_b.
\]
Note that ${}_a \cU {}_b=\set{0}$ if $\parity a \neq \parity b$.

We define
$$
\dotU=\bigoplus_{m,n\in \Z}\phantom{|}{}_m\cU_n.
$$
This is called the {\em modified} (also called {\em idempotented})
quantum enveloping algebra of $\osp(1|2)$ (cf. \cite{BLM, Lu}). Let
$p_{m,n}: \cU\rightarrow {}_m\cU_n$ be the canonical projection. We
endow $\dotU$ with the structure of an associative algebra under the
multiplication
\begin{equation} \label{eq:mult}
p_{k,\ell}(x)p_{m,n}(y)=\delta_{\ell,m}p_{k,n}(xy), \quad \text{ for
} x,y\in \cU;\  k,\ell,m,n\in \Z.
\end{equation}

The algebra $\dotU$ inherits a $\Z$-grading from $\cU$:
$$
\dotU = \bigoplus_{k\in 2\Z} \dotU(k),
$$
where
\[
\dotU(k)=\sum_{m,n\in \Z} p_{m,n}(\cU(k)).
\]

Note that if $x\in \cU(2i)$, then $p_{m,n}(x)=0$ if $2i\neq m-n$,
since the identity $q^{2i}x=KxK^{-1}$ in $\cU$ descends to
$q^{2i}p_{m,n}(x)=q^{m-n}p_{m,n}(x)$.  The new feature in this
algebra is the addition of idempotents $1_n=p_{n,n}(1)$, which
satisfy
$$
1_m 1_n=\delta_{m,n} 1_n.
$$
We have
$$
\phantom{|}{}_m\cU_n=1_m\dotU 1_n.
$$
Also, we have that $\dotU=\dot{U_0}\oplus \dot{U_1}$, where
$$
\dot{U_\epsilon}=\sum_{a,b\in \Z} 1_{2a+\epsilon}\dotU
1_{2b+\epsilon}.
$$
Moreover, $\dotU_0$ and $\dotU_1$ are subalgebras of $\dotU$ such
that $\dot{U_0}\dot{U_1}=\dot{U_1}\dot{U_0}=0$.

\subsection{$\dotU$ as a $\cU$-bimodule}
\label{sec:Ubimod}

The algebra $\dotU$ has a natural $\cU$-bimodule structure: if $x\in
\cU(k)$, $y \in\dotU$ and $z\in \cU(n)$ then we set
\begin{equation} \label{eq:bim}
xp_{\ell,m}(y)z=p_{k+\ell,m-n}(xyz).
\end{equation}
With this action, we have the following identities in $\dotU$, for
$n\in\Z, a\in\N,\ep=0,1$:
\begin{align}\label{idempmult}
 \begin{split}
E_\epsilon^{(a)}1_n=
&\delta_{\epsilon,\parity{n}}1_{n+2a}E_\epsilon^{(a)},\qquad
F_\epsilon^{(a)}1_n=\delta_{\epsilon,\parity{n}}1_{n-2a}F_\epsilon^{(a)},
  \\
&(E_\epsilon F_\epsilon-\pi F_\epsilon E_\epsilon) 1_n
=\delta_{\epsilon,\parity n}[n]1_n,
 \end{split}
\end{align}
\begin{equation}\label{Kmult}
[K_\epsilon;m]1_n=\delta_{\epsilon,\parity n}[n+m]1_n, \qquad
\bbinom{K_\epsilon;m}{a}1_n=\delta_{\epsilon,\parity n}\bbinom{m+n}{a} 1_n.
\end{equation}

The following is a super analogue of \cite[23.1.3]{Lu}.

\begin{prop}
The following identities hold in $\dotU$: for $n\in \Z$, $r,s \ge
0$,
\begin{align}
\pi^{rs} E_\epsilon^{(r)}1_nF_\epsilon^{(s)}
&=\delta_{\epsilon,\parity n}\sum_{i=0}^{\min(r,s)}
\pi^{\binom{i+1}{2}} \bbinom{n+(r+s)}{i}
F_\epsilon^{(s-i)}1_{n+2s+2r-2i}E_\epsilon^{(r-i)},
  \label{dotdivpowcom1} \\
\pi^{rs} F_\epsilon^{(s)}1_nE_\epsilon^{(r)}
&=\delta_{\epsilon,\parity n}\sum_{i=0}^{\min(r,s)}
\pi^{\binom{i}{2}+\epsilon i}\bbinom{(r+s)-n}{i}
E_\epsilon^{(r-i)}1_{n-2s-2r+2i}F_\epsilon^{(s-i)}.
  \label{dotdivpowcom2}
\end{align}
\end{prop}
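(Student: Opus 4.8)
The plan is to derive \eqref{dotdivpowcom1} and \eqref{dotdivpowcom2} by simply specializing the commutation identities of Lemma~\ref{divpowcom}(2) and (4) to the modified setting, using the dictionary between $\AU_\epsilon$-elements and their action on idempotents recorded in \eqref{idempmult} and \eqref{Kmult}. Concretely, for \eqref{dotdivpowcom1} I would start from Lemma~\ref{divpowcom}(2),
\[
\pi^{rs} E_\epsilon^{(r)}F_\epsilon^{(s)}=\sum_{i=0}^{\min(r,s)}
\pi^{\binom{i+1}{2}}
F_\epsilon^{(s-i)}\bbinom{K_\epsilon;2i-(r+s)}{i}E_\epsilon^{(r-i)},
\]
multiply both sides on the right by $1_n$ inside $\dotU$, and push the idempotent leftward through the generators. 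The factor $E_\epsilon^{(r-i)}1_n$ equals $\delta_{\epsilon,\parity n}1_{n+2(r-i)}E_\epsilon^{(r-i)}$ by \eqref{idempmult}; then $\bbinom{K_\epsilon;2i-(r+s)}{i}1_{n+2(r-i)}$ becomes $\bbinom{2i-(r+s)+n+2(r-i)}{i}1_{n+2(r-i)}=\bbinom{n+(r+s)-2i+2i}{i}1_{n+2r-2i}$ — wait, more carefully $2i-(r+s)+n+2r-2i = n+r-s$, so one must track the weight shift arising from the intervening $F^{(s-i)}$ as well; continuing to move the idempotent past $F_\epsilon^{(s-i)}$ multiplies the subscript by a further $-2(s-i)$ and leaves $1_{n+2r-2i-2(s-i)+2(s-i)}$... the bookkeeping is that the binomial coefficient is evaluated at the weight of the vector it acts on, which after commuting $E_\epsilon^{(r-i)}1_n$ is $n+2(r-i)$, giving argument $2i-(r+s)+n+2(r-i)=n+(r-s)$; this does not immediately match $\bbinom{n+(r+s)}{i}$, so I would instead commute $1_n$ all the way to the far left first and read off the weight as seen by each factor. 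The cleanest route: left-multiply by $1_{n+2s+2r}$ instead, using $1_{n+2s+2r}E_\epsilon^{(r)}F_\epsilon^{(s)} = E_\epsilon^{(r)}F_\epsilon^{(s)}1_n$ (since $E^{(r)}F^{(s)}$ has weight $2r-2s$... actually weight $2(r-s)$, hmm), and similarly identify which idempotent each term produces.

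Given the weight-grading subtlety, the approach I would actually carry out is: fix $n$, act on $1_n$ on the right, and use that $F_\epsilon^{(s-i)}\bbinom{K_\epsilon;2i-(r+s)}{i}E_\epsilon^{(r-i)}1_n$, reading right-to-left, equals (by \eqref{idempmult}) $\delta_{\epsilon,\parity n}F_\epsilon^{(s-i)}\bbinom{K_\epsilon;2i-(r+s)}{i}1_{n+2(r-i)}E_\epsilon^{(r-i)}$, then by \eqref{Kmult} this is $\delta_{\epsilon,\parity n}\bbinom{2i-(r+s)+n+2(r-i)}{i}F_\epsilon^{(s-i)}1_{n+2(r-i)}E_\epsilon^{(r-i)}=\delta_{\epsilon,\parity n}\bbinom{n+r-s}{i}F_\epsilon^{(s-i)}1_{n+2(r-i)}E_\epsilon^{(r-i)}$, and finally $F_\epsilon^{(s-i)}1_{n+2(r-i)}=1_{n+2(r-i)-2(s-i)}F_\epsilon^{(s-i)}=1_{n+2r-2s}F_\epsilon^{(s-i)}$. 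Comparing with the claimed right-hand side, the left idempotent there is $1_{n+2s+2r-2i}$ applied to $F^{(s-i)}$, whose weight is $-2(s-i)$, so $F^{(s-i)}1_{n+2s+2r-2i}$... This forces me to recognize that in the statement $E_\epsilon^{(r)}1_nF_\epsilon^{(s)}$ the idempotent $1_n$ sits \emph{between} the two factors, i.e. it is $E^{(r)}\cdot 1_n\cdot F^{(s)}$ in $\dotU$, which equals $\delta_{\epsilon,\parity n} 1_{n+2r}E^{(r)}F^{(s)}1_{n}$; but $F^{(s)}1_n\ne 0$ requires $\parity n=\epsilon$ and shifts to $1_{n-2s}$, so actually the product $1_n F^{(s)}$ needs $F^{(s)}$ on the right of $1_n$ meaning weight $n$ domain — consistent. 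So $E_\epsilon^{(r)}1_nF_\epsilon^{(s)} = \delta_{\epsilon,\parity n}1_{n+2r}\big(E_\epsilon^{(r)}F_\epsilon^{(s)}\big)1_{n-2s}$... no: $1_n F^{(s)} = F^{(s)}1_{n+2s}$, hence $E^{(r)}1_nF^{(s)} = E^{(r)}F^{(s)}1_{n+2s} = 1_{n+2s+2r}E^{(r)}F^{(s)}1_{n+2s}$. Now I plug the Lemma~\ref{divpowcom}(2) expansion for $E^{(r)}F^{(s)}$, sandwiched between $1_{n+2s+2r}$ on the left and $1_{n+2s}$ on the right, and evaluate each $\bbinom{K;2i-(r+s)}{i}$ on the weight it sees; the right idempotent is $1_{n+2s}$, $E^{(r-i)}1_{n+2s}=1_{n+2s+2r-2i}E^{(r-i)}$, $\bbinom{K;2i-(r+s)}{i}1_{n+2s+2r-2i}=\bbinom{2i-(r+s)+n+2s+2r-2i}{i}1_{\cdot}=\bbinom{n+r+s}{i}1_{n+2s+2r-2i}$, and $F^{(s-i)}1_{n+2s+2r-2i}=1_{n+2s+2r-2i-2(s-i)}F^{(s-i)}=1_{n+2r}F^{(s-i)}$ — hmm but the claim writes $F^{(s-i)}1_{n+2s+2r-2i}$. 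Matching: $F^{(s-i)}1_{n+2s+2r-2i}E^{(r-i)}$ is exactly a term of $\dotU$, and its outer-left idempotent after moving $F^{(s-i)}$ through would be $1_{n+2r}$; the statement's term $F_\epsilon^{(s-i)}1_{n+2s+2r-2i}E_\epsilon^{(r-i)}$ — here $1_{n+2s+2r-2i}$ sits between them, and $F^{(s-i)}1_{n+2s+2r-2i} = 1_{n+2s+2r-2i-2(s-i)}F^{(s-i)} = 1_{n+2r}F^{(s-i)}$ — consistent with my computation. So \eqref{dotdivpowcom1} follows.

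The same method applied to Lemma~\ref{divpowcom}(4),
\[
\pi^{rs} F_\epsilon^{(s)}E_\epsilon^{(r)}=\sum_{i=0}^{\min(r,s)}
(-1)^i\pi^{i(r+s)}
E_\epsilon^{(r-i)}\bbinom{K_\epsilon;r+s-(i+1)}{i}F_\epsilon^{(s-i)},
\]
yields \eqref{dotdivpowcom2}, where I must insert $1_n$ in the middle as $F_\epsilon^{(s)}1_nE_\epsilon^{(r)}=1_{n-2s-2r}... $, evaluate $\bbinom{K_\epsilon;r+s-(i+1)}{i}$ at the appropriate weight using \eqref{Kmult}, and check that the resulting argument simplifies to $\binom{(r+s)-n}{i}$ together with the power $\pi^{\binom i2+\epsilon i}$. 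The only place the superalgebra structure intrudes beyond the $\pi$-powers already present in Lemma~\ref{divpowcom} is through the $\pi^\epsilon$ hidden in $[K_\epsilon;m]$ via its definition \eqref{eq:Kn}; on $1_n$ with $\parity n=\epsilon$ this is absorbed correctly by \eqref{Kmult}, which is precisely why the factor $\pi^{\epsilon i}$ appears in \eqref{dotdivpowcom2} (coming from $\bbinom{K_\epsilon;\cdot}{i}$ contributing $\epsilon$ to each of its $i$ factors' $\pi^\epsilon$-weights, combined via \eqref{eq:n-n}-type identities).

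The main obstacle will be purely the weight bookkeeping: getting the subscripts of the idempotents $1_{n+2s+2r-2i}$ etc. exactly right, and confirming that the argument of each super binomial coefficient collapses from $\bbinom{K_\epsilon;2i-(r+s)}{i}$ (resp. $\bbinom{K_\epsilon;r+s-i-1}{i}$) evaluated on the correct weight space to $\bbinom{n+(r+s)}{i}$ (resp. $\bbinom{(r+s)-n}{i}$), the latter requiring the identity \eqref{eq:n-n} relating $\bbinom{n}{a}$ and $\bbinom{a-n-1}{a}$ to account for the sign $(-1)^i$ and the $\pi$-powers turning into $\pi^{\binom i2+\epsilon i}$. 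I would organize this as a short lemma-free computation, doing \eqref{dotdivpowcom1} in full and remarking that \eqref{dotdivpowcom2} is obtained either identically or by applying $\omega_\epsilon$ to \eqref{dotdivpowcom1} and using \eqref{eq:omega} together with the behavior of $\omega_\epsilon$ on idempotents (namely $\omega_\epsilon(1_n)=1_{-n}$), which is the quickest route and mirrors how Lemma~\ref{divpowcom}(3),(4) were deduced from (1),(2).
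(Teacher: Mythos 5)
Your final computation reproduces the paper's proof exactly: parse $E_\epsilon^{(r)}1_nF_\epsilon^{(s)}$ as $E_\epsilon^{(r)}F_\epsilon^{(s)}1_{n+2s}$, expand via Lemma~\ref{divpowcom}(2), push $1_{n+2s}$ leftward through $E_\epsilon^{(r-i)}$ to $1_{n+2s+2r-2i}$, and evaluate $\bbinom{K_\epsilon;2i-(r+s)}{i}$ on it via \eqref{Kmult} to get $\bbinom{n+r+s}{i}$. The paper obtains \eqref{dotdivpowcom2} the same way (invoking \eqref{eq:n-n} to reconcile the binomial and sign), while your alternative of applying $\omega_\epsilon$ to \eqref{dotdivpowcom1} and using \eqref{eq:omega} with $\omega_\epsilon(1_n)=1_{-n}$ is also valid and slightly quicker; both routes are fine.
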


\begin{proof}
First, it is clear by definition that the expressions are zero
unless the parities match, so we may assume that $\epsilon=\parity
n$. Using \eqref{idempmult}, \eqref{Kmult} and Lemma~
\ref{divpowcom}, we compute that
\begin{align*}
\pi^{rs} E_\epsilon^{(r)}1_nF_\epsilon^{(s)} &=\parens{\sum_{i=0}^{\min(r,s)}
\pi^{\binom{i+1}{2}}
F_\epsilon^{(s-i)}\bbinom{K_\epsilon;2i-(r+s)}{i}E_\epsilon^{(r-i)}}1_{n+2s}
 \\
&=\sum_{i=0}^{\min(r,s)} \pi^{\binom{i+1}{2}} F_\epsilon^{(s-i)}
\bbinom{K_\epsilon;2i-(r+s)}{i}1_{n+2s+2r-2i}E_\epsilon^{(r-i)}
 \\
&=\sum_{i=0}^{\min(r,s)} \pi^{\binom{i+1}{2}} \bbinom{n+(r+s)}{i}
F_\epsilon^{(s-i)}1_{n+2s+2r-2i}E_\epsilon^{(r-i)}.
\end{align*}
This proves \eqref{dotdivpowcom1}. The identity
\eqref{dotdivpowcom2} can be proved similarly, using in addition the
identities \eqref{eq:n-n}.
\end{proof}

\subsection{Additional structures of $\dotU$}

We also note that $\dotU$ has a triangular decomposition as in
Lustzig \cite[23.2]{Lu}.  Recall the algebra $\f$ from
\S\ref{subsec:f}. The $\cU$-bimodule structure induces a $(\f,
\f^{\rm op})$-bimodule structure on $\dotU$ via
$$
x\otimes y \cdot u=x^{-}u y^+, \quad \text{ for } x,y\in \f, u\in
\dotU.
$$

Recall that
$F_\epsilon^{(a)}1_nE_\epsilon^{(b)}=0=E_\epsilon^{(b)}1_n
F_\epsilon^{(a)}$ if and only if $\ep \neq p(n)$. Hence we adopt the
following convention by dropping the subscript $\ep$ without
ambiguity:
\begin{equation}  \label{eq:conv}
F^{(a)}1_nE^{(b)}:=F_{p(n)}^{(a)}1_nE_{p(n)}^{(b)}, \quad
E^{(a)}1_nF^{(b)}:=E_{p(n)}^{(a)}1_nF_{p(n)}^{(b)}.
\end{equation}
In this way, we could also drop all subscripts $\ep$ as well as
$\delta_{\epsilon,\parity n}$ in
\eqref{idempmult}-\eqref{dotdivpowcom2}.

It follows by the triangular decomposition of $\cU$ that the
elements $F^{(a)}1_nE^{(b)}$, for  $n\in \Z, a,b\in \N$, form a
basis for $\dot{U}$. Similarly, $E^{(b)}1_n F^{(a)}$, for $n\in \Z,
a,b\in \N$ form a basis for $\dotU$. In addition, it is clear from
\eqref{dotdivpowcom1} and \eqref{dotdivpowcom2} that these two bases
span the same $\cA$-submodule of $\dotU$, denoted by ${}_\cA \dotU
$. This $\cA$-submodule $\dotAU $ is in fact an $\cA$-subalgebra
generated by the elements $E^{(a)}1_n$ and $F^{(a)}1_n$, for $n\in
\Z, a\in\N$.

We say a $\dotU$-module is {\em unital} if for every $v\in M$, $1_n
v=0$ for all but finitely many $n\in \Z$ and $v=\sum_{n\in \Z} 1_n
v$. Each unital module is a weight $\cU$-module under the action
$u\cdot v = \sum_{n\in \Z} (u1_n)v$, where $u1_n$ is viewed as an
element of $\dotU$. Likewise, each weight $\QU$-module with weights
in $q^\Z$ is naturally a unital $\dotU$-module: given a weight
decomposition $v=\sum_{n\in \Z} v_n$ such that $Kv_n =q^n v_n$, we
set $1_n v = v_n$.

We define $\Delta_{a,b,c,d}: {}_{a+b}\cU_{c+d}\rightarrow
{}_{a}\cU_{c}\otimes {}_{b}\cU_{d}$ by (cf. \cite[23.1.5]{Lu})
\[
\Delta_{a,b,c,d}(p_{a+b, c+d}(x))= (p_{a,c}\otimes p_{b,d}) \circ
\Delta(x).
\]
The direct product of these maps for various $a,b,c,d$ defines a
coproduct on $\dotU$ which restricts to $\cA$-linear homomorphism on
${}_\cA\dotU$.

The antilinear bar-involution $\ \bar{\phantom{c}} : \cU \rightarrow
\cU$ induces an antilinear bar-involution $\ \bar{\phantom{c}} :
\dotU \rightarrow \dotU$, which fixes each idempotent $1_n$ for
$n\in \Z$, and satisfies $\overline{xhy}=\bar{x}\bar{h}\bar{y}$ for
$x,y\in U$ and $h \in\dotU$. Similarly, the (anti-)automorphisms
$\omega$, $\tau$ and $\rho$ on $U$ induce (anti-)automorphisms on
$\dotU$ (denoted by the same letters), which respect the
$\cU$-bimodule structure, and $\rho(1_n)=1_{n}$,
$\omega(1_n)=1_{-n}$, $\tau(1_n)=1_{-n}$, for $n\in\Z$.

\subsection{Canonical basis for $\dotU$}

Following Lusztig \cite{Lu}, a canonical basis for $\dotU$ should be
a bar-invariant $\Q(q)$-basis for $\dotU$ and an
$\cA$-basis for ${}_\cA \dotU$ which consist of elements of the form
$$
u=E^{(a)}\diamondsuit_k F^{(b)}\in {}_\cA \dotU 1_k, \text{ for }
a,b\in\N, k\in \Z,
$$
such that $u(\eta\otimes\nu)=(E^{(a)}\diamondsuit
F^{(b)})_{s,t}$ where $\eta$ is the lowest weight vector for
$^\omega L(s)$ and $\nu$ is the highest weight vector for $L(t)$,
with $t-s=k$. We take this as the definition of a canonical basis
for $\dotU$.

Keeping in mind the convention \eqref{eq:conv}, we consider the
elements
\begin{equation}  \label{cbelem}
E^{(a)}1_{-n}F^{(b)}, \quad \pi^{ab} F^{(b)}1_{n}E^{(a)}, \quad \text{ for }
a,b\in\N, n\in \Z,  n\geq a+b.
\end{equation}
By \eqref{dotdivpowcom1}, we have the following overlapping elements
in \eqref{cbelem}:
\begin{equation}  \label{cbelem=}
E^{(a)}1_{-n}F^{(b)}=\pi^{ab}F^{(b)}1_{n}E^{(a)}, \quad \text{ for } n=a+b.
\end{equation}

The following is a super analogue of \cite[Proposition~25.3.2]{Lu},
and it formally looks identical!

\begin{theorem}  \label{th:CBdot}
The elements in \eqref{cbelem} subject to the identification
\eqref{cbelem=} form a canonical basis for $\dotU$. Moreover, if
$n\ge a+b$, we have
\begin{align*}
E^{(a)}1_{-n}F^{(b)} &= E^{(a)}\diamondsuit_{2b-n}F^{(b)},
 \\
\pi^{ab} F^{(b)}1_n E^{(a)} &= E^{(a)}\diamondsuit_{n-2a}F^{(b)}.
\end{align*}
\end{theorem}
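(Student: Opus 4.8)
The plan is to follow the strategy of \cite[\S 25.3]{Lu}, adapted to the covering/super setting via the bar-involution \eqref{eq:bar}. We must verify two things for each element $u$ of \eqref{cbelem}: first that $u$ is bar-invariant in $\dotU$, and second that $u(\eta\otimes\nu)$ equals the canonical basis element $(E^{(a)}\diamondsuit F^{(b)})_{s,t}$ of Proposition~\ref{prop:CBtensor} for the appropriate $s,t$ with $t-s$ equal to the relevant weight. Granting both, the uniqueness clause in Proposition~\ref{prop:CBtensor} forces the displayed identities, and a counting/support argument shows the elements in \eqref{cbelem} (subject to \eqref{cbelem=}) are precisely an $\cA$-basis of ${}_\cA\dotU$: indeed $E^{(a)}1_{-n}F^{(b)}$ and $\pi^{ab}F^{(b)}1_nE^{(a)}$ are, up to the relations \eqref{dotdivpowcom1}--\eqref{dotdivpowcom2}, triangular with leading term a PBW basis vector $F^{(b)}1_kE^{(a)}$, so no linear dependence beyond \eqref{cbelem=} arises.

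First I would establish bar-invariance. The key point is that $E^{(a)}$ and $F^{(b)}$ are bar-invariant in $U$ (the divided powers are built from bar-invariant super quantum integers \eqref{eq:qn}, and $\bar E=E$, $\bar F=F$), while each idempotent $1_n$ is bar-invariant by construction of the induced involution on $\dotU$. Since $\overline{xhy}=\bar x\bar h\bar y$ for $x,y\in U$, $h\in\dotU$, the elements $E^{(a)}1_{-n}F^{(b)}$ and $F^{(b)}1_nE^{(a)}$ are manifestly bar-invariant; the scalar $\pi^{ab}$ is bar-invariant since $\bar\pi=\pi$ (recall $\pi=-1$, so $\bar q=\pi q^{-1}$ and constants are fixed). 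This is the easy half.

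Next I would compute the action on $\eta\otimes\nu$. Take $s,t\in\N$ with $t-s=2b-n$ (so $t-s$ is the weight of $E^{(a)}1_{-n}F^{(b)}$ after applying to a vector of weight $q^{-n+2a}$... more precisely choose $s\equiv \parity n$, $t\equiv\parity n$ with $s+t=$ something $\ge$ suitable bound so that all intermediate divided powers act nontrivially). By definition $\eta$ is the lowest weight vector of $^\omega L(s)$ and $\nu$ the highest weight vector of $L(t)$, so $1_{-n}(\eta\otimes\nu)=\eta\otimes\nu$ when $-s+t=-n$, and $E^{(a)}1_{-n}F^{(b)}(\eta\otimes\nu)=E^{(a)}(\eta\otimes F^{(b)}\nu)$. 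Using the coproduct formula of Lemma~\ref{lem:divpowcop} to expand $E^{(a)}$ and $F^{(b)}$ acting on the tensor product, together with the commutation identities of Lemma~\ref{divpowcom} and the bimodule relations \eqref{dotdivpowcom1}, one expands this vector in the basis $E_{\parity s}^{(m)}\eta\otimes F_{\parity t}^{(n)}\nu$. The claim is that the resulting coefficients $c^{s,t}_{a,b;m,n}$ are exactly the ones characterized in Proposition~\ref{prop:CBtensor}: the leading coefficient $c^{s,t}_{a,b;a,b}=1$, and all lower ones lie in $q^{-1}\Z[q^{-1}]$. The $\Psi$-invariance of $E^{(a)}1_{-n}F^{(b)}(\eta\otimes\nu)$ follows from the bar-invariance of the element in $\dotU$ combined with $\Psi=\Theta\circ(\barmap\times\barmap)$ intertwining the $\dotU$-action appropriately (Proposition on $\Psi$, and the corollary $\Delta(u)\Theta=\Theta\bar\Delta(u)$), so only the two triangularity properties need checking.

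The main obstacle will be the second triangularity property: verifying that every off-diagonal coefficient $c^{s,t}_{a,b;m,n}$ lies in $q^{-1}\Z[q^{-1}]$, and in particular is \emph{integral} (no denominators) — the divided-power coproduct introduces factors like $q^{ab}$ and $(\pi q)^{-ab}$ and super quantum binomial coefficients $\bbinom{n+(r+s)}{i}$, and one must check that all the $[i]^!$ in denominators cancel and that the powers of $q$ that survive are strictly negative off the diagonal. Here the identities \eqref{eq:n-n}, \eqref{eq:abc}, and the $q$-binomial identity invoked in the proof of Proposition~\ref{prop:thetacomm}(2) will be the workhorses; the powers of $\pi$ must also be tracked carefully, but since $\pi^2=1$ they only contribute signs, and the positivity/integrality statement over $\cA$ is what ultimately matters. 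The parallel statement for $\pi^{ab}F^{(b)}1_nE^{(a)}$ is obtained by applying the anti-involution $\tau$ (or $\rho$), which swaps the two bases of $\dotU$ and fixes bar-invariance, reducing it to the case already done; the overlap identity \eqref{cbelem=} is then just the $i=b$ (equivalently $n=a+b$) degenerate case of \eqref{dotdivpowcom1}, already recorded.
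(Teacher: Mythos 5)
Your proposal follows essentially the same route as the paper: establish bar-invariance from $\bar E=E$, $\bar F=F$, $\overline{1_n}=1_n$; compute $E^{(a)}1_{-n}F^{(b)}(\eta_s\otimes\nu_t)$ via the divided-power coproduct formula of Lemma~\ref{lem:divpowcop} and the commutation Lemma~\ref{divpowcom}; observe the result is $\Psi$-invariant because $\Theta$ fixes $\eta_s\otimes\nu_t$; check triangularity (leading coefficient $1$, off-diagonal coefficients in $q^{-1}\Z[q^{-1}]$ by a degree estimate on $q^{j(a-j-s)}\bbinom{j-b+t}{j}$); and invoke the uniqueness in Proposition~\ref{prop:CBtensor}. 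Two small stylistic deviations worth noting: for linear independence of \eqref{cbelem} you sketch a triangularity argument against the PBW basis $\{F^{(b')}1_kE^{(a')}\}$, whereas the paper instead uses the cleaner observation that each half of \eqref{cbelem} is a subset of a known PBW basis and the two halves live in disjoint weight blocks (the sum of the two idempotent indices is $\le 0$ in one half and $\ge 0$ in the other, overlapping exactly at \eqref{cbelem=}); and you propose deducing the $\pi^{ab}F^{(b)}1_nE^{(a)}$ case from the first via $\tau$ or $\rho$, where the paper simply says ``a similar argument applies'' — your reduction would work but requires tracking the sign $\tau(E^{(a)})=\pi^{a(1-\ep)}E^{(a)}$ against the prefactor $\pi^{ab}$, so it is not obviously shorter than redoing the computation.
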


\begin{proof}
First, recall that all elements of the form $E^{(a)}1_nF^{(b)}$ form
a basis for the $\cA$-algebra ${}_\cA \dotU$ and $\Q(q)$-algebra
$\dotU$. If $a+b>n$, $E^{(a)}1_{-n}F^{(b)}$ can be expressed as a
$\cA$-linear combination of the elements in \eqref{cbelem} by using
\eqref{dotdivpowcom1} as follows:
\[
\pi^{ab} E^{(a)}1_{-n}F^{(b)}=\sum_{i=0}^{\min(a,b)}
\pi^{\binom{i+1}{2}} \bbinom{a+b-n}{i}
F^{(b-i)}1_{2a+2b-n-2i}E^{(a-i)},
\]
where
$
0\le a-i+b-i<(a+b-n)+a+b-2i=2a+2b-n-2i.
$
Hence we conclude that the set \eqref{cbelem} forms a spanning set
of $\dotU$. On the other hand, the set \eqref{cbelem} naturally
splits into two halves, each of which is already linearly
independent. Except for the case $a+b=n$ with identification
\eqref{cbelem=}, the halves live in different subspaces ${}_a U_b$
and hence are necessarily linearly independent. This shows the
linear independence of the set \eqref{cbelem} subject to the
identification \eqref{cbelem=}.

Let $\eta_s$ and $\nu_t$ be the lowest and highest weight vectors of
$^\omega L(s)$ and $L(t)$. We have
$E^{(a)}1_{-n}F^{(b)}(\eta_s\otimes \nu_t)=0$ unless $-n+2b=t-s$, in
which case we compute by Lemma~\ref{lem:divpowcop} that
\begin{align*}
& E^{(a)}1_{-n}F^{(b)} (\eta_s\otimes \nu_t)
 \\
&=\Delta(E^{(a)})\Delta(F^{(b)})(\eta_s\otimes \nu_t)
=\Delta(E^{(a)})(\eta_s\otimes F^{(b)}\nu_t)
 \\
&=\sum_{a=c+d}\pi^{sd} q^{cd}E^{(c)}K^d \eta_s\otimes
E^{(d)}F^{(b)}\nu_t
 \\
&=\sum_{a=c+d}\pi^{sd}q^{dc-ds}E^{(c)} \eta_s\otimes
E^{(d)}F^{(b)}\nu_t
 \\
&=\sum_{a=c+d}\sum_{i=0}^{\min(b,d)}\pi^{sd}q^{dc-ds}E^{(c)}\eta_s\otimes
 \pi^{-bd}\pi^{\binom{i+1}{2}} F^{(b-i)}\bbinom{K;2i-(b+d)}{i} E^{(d-i)}\nu_t
  \\
&=\sum_{a=c+d}\pi^{sd}q^{dc-ds}E^{(c)} \eta_s\otimes
 \pi^{-bd}\pi^{\binom{d+1}{2}} \bbinom{d-b+t}{d} F^{(b-d)}\nu_t
  \\
&=\sum_{0\leq j\leq \min(a,b)}
\pi^{sj+\binom{j+1}{2}-bj}q^{j(a-j-s)} \bbinom{j-b+t}{j} E^{(a-j)}
\eta_s\otimes  F^{(b-j)}\nu_t.
\end{align*}

Let us denote by $X$ the right-hand side of the last equation. Then
$X$ is bar-invariant since the left-hand side is; it is also
therefore $\Theta$-invariant since $\Theta (\eta_s\otimes \nu_t) =
\eta_s\otimes \nu_t$, so $X$ is $\Psi$-invariant. The leading term
(i.e., the term with $j=0$) of $X$ is $E^{(a)} \eta_s\otimes
F^{(b)}\nu_t$. If $j>0$, a degree argument shows that
$q^{j(a-j-s)}\bbinom{j-b+t}{j}$ lies in $q^{-1}\Z[q^{-1}]$. Hence
$X$ satisfies the defining properties of the element
$(E^{(a)}\diamondsuit F^{(b)})_{s,t}$ (see
Proposition~\ref{prop:CBtensor}), and then must be equal. A similar
argument applies to $F^{(b)}1_{n}E^{(a)}$.

It is clear from the triangular decomposition and the definition of
$\, \bar{\phantom{c}} \;$ that the other properties of a canonical
basis are satisfied, completing the proof.
\end{proof}

From the proof above, we have the following formula for the
coefficients $c_{a,b;m,n}^{s,t}$ in the expansion of
$(E^{(a)}\diamondsuit F^{(b)})_{s,t}$ as defined in
Proposition~\ref{prop:CBtensor}.

\begin{cor}  \label{cor:const}
Let $0\leq a \leq s,\, 0\leq b \leq t$. For $0\le j\leq \min(a,b)$,
we have
$$
c_{a,b;a-j,b-j}^{s,t}= \pi^{sj+\binom{j+1}{2}-bj}q^{j(a-j-s)}
\bbinom{j-b+t}{j}.
$$
\end{cor}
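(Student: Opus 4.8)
The plan is to extract the formula directly from the computation already performed in the proof of Theorem~\ref{th:CBdot}. There, for $0\le a\le s$ and $0\le b\le t$ and the unique $n\in\Z$ with $-n+2b=t-s$, the vector $E^{(a)}1_{-n}F^{(b)}(\eta_s\otimes\nu_t)$ was expanded using Lemma~\ref{lem:divpowcop}, the commutation formulas of Lemma~\ref{divpowcom}, and the identities \eqref{Kmult}, producing the closed form
\[
X=\sum_{0\leq j\leq\min(a,b)}\pi^{sj+\binom{j+1}{2}-bj}q^{j(a-j-s)}\bbinom{j-b+t}{j}\,E_{\parity{s}}^{(a-j)}\eta_s\otimes F_{\parity{t}}^{(b-j)}\nu_t,
\]
and it was shown that $X$ is $\Psi$-invariant with leading term $E_{\parity{s}}^{(a)}\eta_s\otimes F_{\parity{t}}^{(b)}\nu_t$ and all remaining coefficients in $q^{-1}\Z[q^{-1}]$, whence $X=(E^{(a)}\diamondsuit F^{(b)})_{s,t}$ by the uniqueness clause of Proposition~\ref{prop:CBtensor}.

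First I would note that the monomials in $X$ are exactly the $E_{\parity{s}}^{(m)}\eta_s\otimes F_{\parity{t}}^{(n)}\nu_t$ with $(m,n)=(a-j,b-j)$, $0\le j\le\min(a,b)$, and that these are precisely the pairs with $(m,n)\preceq(a,b)$: the partial order forces $m-n=a-b$ and $m\le a$, so setting $j=a-m$ gives $n=b-j$ and $0\le j\le\min(a,b)$. Comparing $X$ coefficientwise with the expansion $(E^{(a)}\diamondsuit F^{(b)})_{s,t}=\sum_{m,n}c_{a,b;m,n}^{s,t}\,E_{\parity{s}}^{(m)}\eta\otimes F_{\parity{t}}^{(n)}\nu$ from Proposition~\ref{prop:CBtensor}, and invoking the linear independence of the displayed basis of $L(s,t)$, one reads off $c_{a,b;a-j,b-j}^{s,t}=\pi^{sj+\binom{j+1}{2}-bj}q^{j(a-j-s)}\bbinom{j-b+t}{j}$ for $0\le j\le\min(a,b)$ (and $c_{a,b;m,n}^{s,t}=0$ for all other $(m,n)$), which is the claimed formula.

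Since this amounts to bookkeeping within an already-completed computation, I do not anticipate a genuine obstacle. Two small points warrant explicit mention. First, the expansion of $E^{(a)}1_{-n}F^{(b)}(\eta_s\otimes\nu_t)$ used above does not rely on the inequality $n\ge a+b$ from Theorem~\ref{th:CBdot}; it is valid for every admissible pair $(a,b)$ with $0\le a\le s$, $0\le b\le t$ (so that $E^{(a)}\eta_s$ and $F^{(b)}\nu_t$ are nonzero), and in particular covers all the canonical basis elements $(E^{(a)}\diamondsuit F^{(b)})_{s,t}$ of Proposition~\ref{prop:CBtensor}. Second, the identification $X=(E^{(a)}\diamondsuit F^{(b)})_{s,t}$ rests on $E^{(a)}1_{-n}F^{(b)}$ being bar-invariant --- it is a product of the bar-invariant elements $E^{(a)}$, $1_{-n}$, $F^{(b)}$ --- together with $\Theta(\eta_s\otimes\nu_t)=\eta_s\otimes\nu_t$, which make $X$ $\Psi$-invariant; the remaining hypotheses of Proposition~\ref{prop:CBtensor} are supplied by the leading-term and degree considerations recalled above.
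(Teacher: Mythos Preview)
Your approach is exactly the paper's: read the coefficients off the expansion of $E^{(a)}1_{-n}F^{(b)}(\eta_s\otimes\nu_t)$ computed in the proof of Theorem~\ref{th:CBdot}, and invoke the uniqueness in Proposition~\ref{prop:CBtensor}. The bookkeeping identifying the index set $\{(a-j,b-j):0\le j\le\min(a,b)\}$ with $\{(m,n)\preceq(a,b)\}$ is fine.

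There is, however, a genuine gap in your second ``small point.'' You are right that the expansion formula for $X$ does not use $n\ge a+b$. But the identification $X=(E^{(a)}\diamondsuit F^{(b)})_{s,t}$ via Proposition~\ref{prop:CBtensor} also needs the $j>0$ coefficients to lie in $q^{-1}\Z[q^{-1}]$, and the ``degree argument'' you invoke for this \emph{does} require $n\ge a+b$. Concretely, the top $q$-degree of $q^{j(a-j-s)}\bbinom{j-b+t}{j}$ is $j\bigl((a-s)+(t-b)-j\bigr)$, which is negative for every $1\le j\le\min(a,b)$ precisely when $t-b\le s-a$, i.e.\ when $n=s-t+2b\ge a+b$. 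In the complementary range the conclusion fails: for $s=a=b=1$, $t=2$, $j=1$ the formula gives $\pi q^{-1}[2]=1+\pi q^{-2}\notin q^{-1}\Z[q^{-1}]$, so $X$ is \emph{not} the canonical basis element there (indeed $E^{(1)}1_{-1}F^{(1)}=\pi F^{(1)}1_3E^{(1)}+1_1$ by \eqref{dotdivpowcom1}, and acting on $\eta_1\otimes\nu_2$ one finds $c_{1,1;0,0}^{1,2}=\pi q^{-2}$). So the computation you cite establishes the corollary only in the range $s-a\ge t-b$; for $s-a<t-b$ one must instead use the ``similar argument'' for $\pi^{ab}F^{(b)}1_{n'}E^{(a)}$ alluded to in the proof of Theorem~\ref{th:CBdot}, which yields a different expression. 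The paper's one-line proof carries the same implicit restriction, so your extrapolation to all $0\le a\le s$, $0\le b\le t$ is not supported by the cited computation.
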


\subsection{A bilinear form on $\dotU$}\label{sec:dotinnerprod}

Recall the definition of $\rho$ from Proposition~\ref{prop:autom}.
Since we have defined a suitable bilinear form $(\cdot,\cdot)$ on
$\f$ (see \eqref{eq:bform1} and \eqref{eq:bform2}) and constructed
the canonical basis on $\dotU$, the same proof in \cite[26.1.2]{Lu}
leads to the following.

\begin{prop}
There exists a unique bilinear form $(\cdot,\cdot):\dotU\times
\dotU\rightarrow \Q(q)$ such that
\begin{enumerate}
\item $(1_a x 1_b, 1_c y1_d)=0$ whenever $a\neq c$ or $b\neq d$, $a,b,c,d\in \Z$;
\item $(ux,y)=(x,\rho(u) y)$ for $u\in \QU$ and $x,y\in\dotU$;
\item $(x^- 1_a, y^- 1_a)=(x,y)$ for all $x,y\in \f$ and $a\in \Z$.
\end{enumerate}
Moreover, the bilinear form $(\cdot,\cdot)$ is symmetric.
\end{prop}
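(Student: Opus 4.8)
The plan is to follow Lusztig's construction in \cite[26.1.2]{Lu} essentially verbatim, using the fact that all the structural ingredients it relies on have been established above in the super setting. The strategy is: (i) use the canonical basis from Theorem~\ref{th:CBdot} to reduce the construction of the form on $\dotU$ to the bilinear form on $\f$; (ii) verify properties (1)--(3) on the basis; (iii) establish symmetry by a separate argument comparing the form with the one built from $\rho$-adjointness on the other triangular factor.

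First I would note that by the triangular decomposition of $\dotU$ the elements $F^{(a)} 1_n E^{(b)}$ form an $\cA$-basis (and $\Q(q)$-basis), so by property (1) it suffices to define $(\cdot,\cdot)$ on each block ${}_m\dotU_n = 1_m \dotU 1_n$ separately. Within a fixed block one uses the $(\f,\f^{\mathrm{op}})$-bimodule structure: every element of $1_m\dotU 1_n$ can be written uniquely via $x \otimes y \cdot 1_n = x^- 1_n y^+$ for $x,y \in \f$ lying in appropriate graded pieces, so one simply declares
\[
(x^- 1_n y^+, \; x'^- 1_n y'^+) = (x,x')\,(y,y'),
\]
where $(\cdot,\cdot)$ on $\f$ is the form \eqref{eq:bform1}--\eqref{eq:bform2}. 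This forces uniqueness once properties (2)--(3) are known to pin down the form, and one must then check (2): that $\rho$-adjointness holds. This is where the definition of $\rho$ in Proposition~\ref{prop:autom}(4), namely $\rho(E_\epsilon) = qK_\epsilon F_\epsilon$, $\rho(F_\epsilon) = qK_\epsilon^{-1} E_\epsilon$, $\rho(K_\epsilon)=K_\epsilon$, must be used together with the commutation relations \eqref{dotdivpowcom1}--\eqref{dotdivpowcom2} to move generators across; the computation is the super analogue of the $U_q(\fsl(2))$ case, and the point is that the same normalization in \eqref{eq:bform1}--\eqref{eq:bform2} (chosen, per the remark after \eqref{eq:bform2}, to match this setting) makes the signs $\pi$ and powers of $q$ balance. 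Concretely one reduces to checking $(E_\epsilon \cdot x^- 1_n y^+, z^- 1_n w^+) = (x^- 1_n y^+, \rho(E_\epsilon) z^- 1_n w^+)$ and the analogous statement for $F_\epsilon$ and $K_\epsilon$, and these follow from the corresponding adjointness properties of left/right multiplication by $\theta$ on $\f$ relative to $(\cdot,\cdot)$, exactly as in \emph{loc.\ cit.}

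For symmetry, the standard trick is to consider the second bilinear form obtained by using the \emph{other} basis $E^{(b)} 1_n F^{(a)}$ (equivalently, transporting via $\rho$ or via the bar-duality between the two halves of the canonical basis), show it also satisfies (1)--(3), and invoke uniqueness to conclude the two agree; combined with the observation that swapping the two arguments of the first form produces the second, this yields $(x,y) = (y,x)$. Alternatively, since the canonical basis $E^{(a)}\diamondsuit_k F^{(b)}$ is characterized intrinsically and is bar-invariant, one can show the Gram matrix of the form in the canonical basis is symmetric by relating $(E^{(a)}\diamondsuit_k F^{(b)}, E^{(a')}\diamondsuit_k F^{(b')})$ to inner products in the tensor modules $L(s,t)$ via the defining property $u(\eta\otimes\nu) = (E^{(a)}\diamondsuit F^{(b)})_{s,t}$, where the natural form on $^\omega L(s)\otimes L(t)$ built from $\rho$ is manifestly symmetric (being a tensor product of the symmetric Shapovalov-type forms on the factors). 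The main obstacle I anticipate is bookkeeping: verifying $\rho$-adjointness (property (2)) requires carefully tracking the $\pi$-signs introduced by $\rho$ (note $\rho(E_\epsilon)$ involves $K_\epsilon$, whose commutation with $E,F$ produces powers of $q^2$) against the $\pi$-powers appearing in \eqref{eq:bform2} and in the divided-power commutation formulas \eqref{dotdivpowcom1}--\eqref{dotdivpowcom2}; but since \eqref{eq:bform1}--\eqref{eq:bform2} were designed precisely to make this work (as the text remarks), no genuinely new phenomenon arises beyond the $\fsl(2)$ computation of \cite[26.1.2]{Lu}, and the proof goes through \emph{mutatis mutandis}.
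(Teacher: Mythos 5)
Your existence construction has a genuine gap: the declaration
\[
(x^-1_n y^+,\;x'^-1_n y'^+)=(x,x')\,(y,y')
\]
does \emph{not} produce a form satisfying property~(2), and the claim that $\rho$-adjointness ``follows from the corresponding adjointness of $\theta$-multiplication on $\f$, exactly as in \emph{loc.\ cit.}'' is where the argument breaks. Test adjointness on the pair $E1_nF$ and $1_{n+2}$, both in $1_{n+2}\dotU 1_{n+2}$. Properties~(2) and~(3) force
\[
(E1_nF,\,1_{n+2})=(1_nF,\,\rho(E)1_{n+2})=q^{n+1}(F1_{n+2},F1_{n+2})=q^{n+1}(\theta,\theta)=\frac{\pi\,q^{n+2}}{\pi q-q^{-1}}.
\]
On the other hand, $E1_nF=EF1_{n+2}=\pi F1_{n+4}E+[n+2]1_{n+2}$ by \eqref{idempmult}, and under your formula $F1_{n+4}E$ is declared orthogonal to $1_{n+2}$ (their inner idempotents differ), so the left side would be $[n+2]=\frac{\pi^{n+2}q^{n+2}-q^{-n-2}}{\pi q-q^{-1}}$. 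These disagree (for $n$ odd, the discrepancy is $q^{-n-2}/(\pi q-q^{-1})$; for $n$ even it is larger), already at $\pi=1$, i.e.\ for ordinary $\fsl(2)$. So the product form is not $\rho$-adjoint, and in the correct form $(F1_{n+4}E,\,1_{n+2})$ must in fact be nonzero: the ``boundary'' term $[n+2]1_{n+2}$ coming from the commutator $(EF-\pi FE)1_{n+2}=[n+2]1_{n+2}$ is exactly what the naive tensor declaration fails to account for.

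What \cite[26.1.2]{Lu} actually does, and what the paper's one-line proof is invoking, is to define $(\cdot,\cdot)$ on $1_a\dotU 1_b$ by pulling back the natural contravariant (Shapovalov-type) forms on the finite-dimensional tensor modules $^{\omega}L(s)\otimes L(t)$ along the surjections $\dotU 1_{t-s}\twoheadrightarrow{}^{\omega}L(s)\otimes L(t)$, and then checking that the resulting values stabilize as $s,t\to\infty$ with $t-s$ fixed; this stabilization, not a product formula, is the substance of the existence proof, and it is in this limit that the extra term $q^{-n-2}/(\pi q-q^{-1})$ above disappears. The inputs needed for that argument — the explicit action of $\dotU$ on $^{\omega}L(s)\otimes L(t)$ via Lemma~\ref{lem:divpowcop}, the canonical basis of Theorem~\ref{th:CBdot}, and complete reducibility (Proposition~\ref{prop:cUmod}) — are all established in the paper, so the transport from \cite{Lu} is legitimate, but one must actually carry it out rather than substitute the product form. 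Your closing ``alternative'' observation (relating the Gram matrix to inner products on $L(s,t)$) is in fact the right mechanism and should replace the product-form declaration as the main construction; the uniqueness argument from (1)--(3), and the derivation of symmetry from uniqueness, are then sound as you sketched them.
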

\section{The covering algebras}
\label{sec:covering}

Essentially all the constructions and results in the previous
sections make sense in the framework of covering algebras introduced
below by treating $\pi$ as a formal parameter satisfying $\pi^2=1$.
The idea of (half) covering algebras first appeared in \cite{HW}.
Given a ring $A$ with unit, we define a new ring
$A^\pi=A[\pi]/(\pi^2-1).$ We shall mainly need $\piA$ and $\piQ$
below. Note that $\piA \subset \piQ$.
The quantum integers and quantum binomials $[n],\bbinom{n}{i}$ in
\eqref{eq:qn} and \eqref{eq:qbinom} make sense as elements in $\piA$
and also in $\piQ$.

\subsection{Covering algebra $\piU$}

We define the {\em covering algebra $\piU$} for $\osp(1|2)$ to be
the $\piQ$-(super)algebra generated by elements
$E_\epsilon,F_\epsilon,K_\epsilon$ and $K_\epsilon^{-1}$ for
$\epsilon\in\set{0,1}$, subject to the relations (1)-(3) in
Remark~\ref{rem:epsrels}. Then all the definitions and calculations
earlier on can be translated to the covering algebra. Indeed, all
computations only involve quotients of elements of the form $(\pi
q)^n - q^{-n}$ and we never used $1+\pi=0$ to reduce any expression.
Therefore we have the following.
\begin{enumerate}
\item
$\piU$ is a free $\piQ$-module with basis
$F_\epsilon^{(a)}K_\epsilon^bE_\epsilon^{(c)}$ for $a,c\in \N$,
$b\in \Z$, $\epsilon\in \set{0,1}$.

\item
$\piU$ has algebra (anti-)automorphisms as described in
Proposition~\ref{prop:autom} which fix $\pi$.

\item
The elements $E^{(r)}$, $F^{(s)}$ satisfy the commutation relations
in Lemma~\ref{divpowcom}.


\item
$\piU$ has a Hopf superalgebra structure.

\item
$\piU$ admits a quasi-R matrix $\Theta$ and the map $\Psi$ as
operators on tensor products of modules.

\item
Proposition \ref{prop:CBtensor} remains valid, with
$c_{a,b;m,n}^{s,t}\in q^{-1}\N[q^{-1},\pi]$.
\end{enumerate}

\subsection{Covering algebra $\dotpiU$}

Similarly, we can modify the definition of $\dotU$ in
\S\ref{sec:defdotU} as follows. Let $a,b\in \Z$ and set
\[
{}_a\cU^\pi_b=\piU/\parens{(K_{p(a)}-q^{a})\piU+\piU(K_{p(b)}-q^{b})},
\]
and define
$$
\dotU^\pi=\bigoplus_{a,b\in \Z}\phantom{|}{}_a\piU_b.
$$
This is called the {\em modified} (also called {\em idempotented})
covering quantum (super)algebra of $\QU^\pi$. Imitating the
$\cA$-subalgebra $\dotAU$, we can define the $\piA$-subalgebra
$\dotAU^\pi$. We can now reinterpret earlier results on $\dotU$ in
the setting of covering algebra as follows:
\begin{enumerate}
\item
The identities \eqref{idempmult}, \eqref{Kmult},
\eqref{dotdivpowcom1}, and \eqref{dotdivpowcom2} are valid in
$\dotAU^\pi$.

\item
Theorem~\ref{th:CBdot} on canonical basis is valid for $\dotAU^\pi$.
\end{enumerate}

\subsection{Specializations}

The specialization by setting $\pi$ to be $\pm 1$ in the
constructions and statements for the covering algebras recovers
corresponding results for quantum $\fsl(2)$ and $\osp(1|2)$
simultaneously as follows.
\begin{enumerate}
\item
Specializing $\pi=-1$, we obtain that $\cU^\pi/ \langle \pi+1\rangle
\cong \cU$ and $\dotU^\pi/ \langle \pi+1\rangle \cong \dotU$.

\item
The canonical basis for $\dotU^\pi$ specializes at $\pi=-1$ to that
for $\dotU$.

\item
Specializing $\pi=1$, we obtain that $\cU^\pi/ \langle \pi-1\rangle$
is isomorphic to a direct sum of two copies of the quantum group
$U_q(\mathfrak{sl}(2))$, and $\dotU^\pi/ \langle \pi-1\rangle$ is
isomorphic to the modified algebra $\dot{U}_q(\mathfrak{sl}(2))$ in
\cite{BLM, Lu}.

\item
The canonical basis for $\dotU^\pi$ specializes at $\pi=1$ to that
for the modified quantum $\mathfrak{sl}(2)$ given in
\cite[Proposition~25.3.2]{Lu}.
\end{enumerate}

\begin{remark}
The super sign, being inherent in the structure of superalgebras,
rules out the hope of positivity of the structure constants for
canonical basis of the quantum superalgebra $\dotU$ in the usual
sense. Using \eqref{idempmult}, \eqref{dotdivpowcom1} and
\eqref{dotdivpowcom2}, we can show that the structure coefficients
from multiplying canonical basis  elements in $\dotU^\pi$ lie in
$\N[q, q^{-1},\pi]$. So passing to covering algebras restores the
positivity.
\end{remark}

A categorification of $\dotU^\pi$ and its canonical basis, \`a la
Lauda \cite{La} for modified quantum $\mathfrak{sl}(2)$, is expected
in a generalized framework of spin nilHecke algebras, with $\pi$
categorified as a parity shift functor as in \cite{HW}. Such a
categorification would be relevant to odd Khovanov homology and knot
invariants (also compare \cite{B2}). Forgetting the $\Z_2$-grading
and the parity shift functor would lead to a (second)
categorification of modified quantum $\fsl(2)$ and its canonical
basis; see (4) above.

\end{document}